\newif\ifinfootnote
\let\footnoteasusual\footnote
\renewcommand{\footnote}[1]
{\infootnotetrue\footnoteasusual{#1}\infootnotefalse}
\newcommand{\printname}[1]
{\ifmmode{ \smash{ \raisebox{5pt}{\text{\tiny{#1}}} } }
 \else   {\ifinfootnote \smash{\raisebox{0pt}{\tiny{#1}}}
             \else { \marginpar{
                     \smash{ \makebox[0pt]{\raisebox{-12pt}{\tiny{#1}}} }
                               } } \fi} \fi}
\numberwithin{equation}{section}
\newtheorem {Theorem}[equation]         {Theorem}
\newtheorem {Lemma}[equation]           {Lemma}
\newtheorem {Claim*}                    {Claim}
\newtheorem {Corollary} [equation]      {Corollary}
\newtheorem {Proposition}  [equation]   {Proposition}
\theoremstyle{definition}
\newtheorem{Definition}[equation]{Definition}
\theoremstyle{remark}
\newtheorem{Remark}[equation]{Remark}
\newtheorem*{Remark*}{Remark}
\newtheorem{Example}[equation]{Example}
\setlist{topsep=0pt,itemsep=6pt}
\def \Z {{\mathbb Z}}
\def \R {{\mathbb R}}
\def \C {{\mathbb C}}
\newcommand{\Q}{{\mathbb Q}}
\renewcommand{\P}{\textup{P}}
\newcommand{\Aa}{\mathcal{A}}
\newcommand{\K}{{\mathbb K}}
\newcommand{\PP}{\mathcal{P}}
\newcommand{\Sp}{\textup{Sp}}
\newcommand{\B}{\mathbb{B}}
\newcommand{\bb}{\mathbb{b}}
\newcommand{\w}{{\sf w}}
\renewcommand{\v}{{\sf v}}
\newcommand{\fv}{\mathfrak{v}}
\newcommand{\ptconv}{\textup{p-conv}}
\renewcommand{\Sp}{\textup{Sp}} 
\newcommand{\Spec}{\textup{Spec}}
\newcommand{\Proj}{\textup{Proj}}
\renewcommand{\fv}{\mathfrak{v}}
\newcommand{\HH}{\mathcal{H}}
\newcommand{\NN}{\mathcal{N}}
\newcommand{\MM}{\mathcal{M}}
\newcommand{\exampleqed}{%
{   
\renewcommand{\qedsymbol}{$\lozenge$}%
\qed%
}
}
\title{Gorenstein-Fano polytopes and compactifications of rank 2 polyptych lattices }
\author{Adrian Cook}
\address{School of Mathematics, The University of Edinburgh, 
James Clerk Maxwell Building, 
Edinburgh, EH9 3FD , United Kingdom}
\email{a.cook@ed.ac.uk}
\author{Laura Escobar}
\address{Dept.\ of Mathematics, One Brookings Drive, 
Washington University St.\ Louis, St.\ Louis, Missouri, 63130-4899, USA} 
\email{laurae@wustl.edu}
\author{Megumi Harada}
\address{Dept.\ of Mathematics and Statistics, McMaster University, 
1280 Main Street West, 
Hamilton, Ontario L8S 4K1, Canada}
\email{haradam@mcmaster.ca}
\author{Christopher Manon} 
\address{Department of Mathematics, 719 Patterson Office Tower, University of Kentucky, Lexington, Kentucky, 40506-0027, USA} 
\email{chris.manon@gmail.com}
\date{\today}
\keywords{Toric varieties, cluster algebras, mutation, Newton-Okounkov bodies, tropicalization, toric degenerations, piecewise linearity, polytopes, Gorenstein-Fano polytopes, Cox rings}
\subjclass[2010]{Primary: 14M15, Secondary: 13F60, 14T15, 14T20}
\begin{document}

\begin{abstract}
The notion of polyptych lattices, introduced by Escobar, Harada, and Manon, wraps the data of a collection of lattices related by piecewise-linear bijections together into a single semi-algebraic object, equipped with its own notions of convexity and polyhedra. The main purpose of this manuscript is to construct an explicit family of polyptych lattices, and to illustrate via explicit computations the abstract theory introduced by Escobar-Harada-Manon. Specifically, we first construct a family of rank-$2$ polyptych lattices $\MM_s$ with $2$ charts, compute their space of points, and prove that they are full and self-dual. We then give a concrete sample computation of a point-convex hull in $\MM_s \otimes \R$ to illustrate that convex geometry in the polyptych lattice setting can exhibit phenomena not seen in the classical situation. We also give multiple examples of $2$-dimensional ``chart-Gorenstein-Fano'' polytopes, which give rise to pairs of mutation-related $2$-dimensional (classical) Gorenstein-Fano polytopes. Finally, we produce detropicalizations $(\Aa_s, \fv_s)$ of $\MM_s$, and in the case $s=1$ where the detropicalization is a UFD, and with respect to a certain choice of PL polytope $\PP$, we give an explicit generators-and-relations presentation of the (finitely generated) Cox ring of the compactification $X_{\Aa_s}(\PP)$ of $\Spec(\Aa_s)$ with respect to $\PP$. 
\end{abstract}

\maketitle

{
  \hypersetup{linkcolor=black}
  \tableofcontents
}

\section{Introduction}

We view this manuscript as a companion paper to \cite{EscobarHaradaManon-PL}, where the new concept of \textbf{polyptych lattices} is introduced and some of its basic properties explored. The main purpose of this paper is to concretely illustrate several of the abstract constructions given in \cite{EscobarHaradaManon-PL} via a very explicit family of examples.

A polyptych lattice $\MM$ is a collection of lattices $\mathcal{S} = \{M_i \cong \Z^r\}$ which are related by piecewise-linear bijections (which we think of as ``mutations'') \cite[Definition 2.1]{EscobarHaradaManon-PL}. As explained in the introduction of \cite{EscobarHaradaManon-PL}, we view the concept of a polyptych lattice as a generalization of a classical lattice $M \cong \Z^r$ as it appears in toric geometry; in the polyptych lattice world, the classical lattice is the ``trivial'' case in which the set $\mathcal{S}$ of lattices consists only of a single lattice, and there are no mutations. In this note, we define a family of polyptych lattices which may be considered as the simplest non-trivial case, namely, where $\MM$ consists of exactly $2$ lattices $\{M_1,M_2\}$, and each $M_i$ is of rank $2$, i.e., $M_i \cong \Z^2$. We will see below that, even in such a simple case, we can already see interesting phenomena.

We now describe the content of this note in some more detail.  Let $s$ be a positive integer. After a very brief review in Section~\ref{sec: background} of the key definitions of \cite{EscobarHaradaManon-PL}, we define in Section~\ref{sec: MM_s} a polyptych lattice $\MM_s$ of rank $2$ over $\Z$ consisting of $2$ lattices $M_1, M_2$ (both isomorphic to $\Z^2$), related by a ``shear'' mutation where the length of the shear depends on the parameter $s$. The main results of Section~\ref{sec: MM_s} explicitly compute the \textbf{space of points} $\Sp(\MM_s)$ (Proposition~\ref{proposition: points of Sp MMs}) in the sense of \cite[Definition 3.1]{EscobarHaradaManon-PL}, and show that $\MM_s$ is \textbf{(strictly) self-dual} in the sense of \cite[Definition 4.1]{EscobarHaradaManon-PL}. We then explore some convex geometry in the polyptych lattice setting in Section~\ref{sec: pt conv hull example}, where we give a sample computation of a \textbf{point-convex hull} of a finite set $S$ in $\MM$ in the sense of \cite[Definition 3.22]{EscobarHaradaManon-PL}. This computation shows that convex geometry can be surprising in the PL context; indeed, our example shows that when viewed in one of the lattices $M_i$, a point-convex hull $\ptconv(S)$ of a set $S$ may \emph{not} be the same as the classical convex hull.

  In Section~\ref{sec: GF polytope} we give multiple examples of what we call \textbf{chart-Gorenstein-Fano PL polytopes}, in the sense of \cite[Definitions 5.1, 5.21]{EscobarHaradaManon-PL}. This deserves some discussion, since it is connected to past work in related areas. Since we consider in this paper a family of rank-$2$ polyptych lattices with $2$ charts, our PL polytopes have 2 chart images $P_1, P_2$ which are related by a single mutation. If our PL polytope $\PP$ is chart-Gorenstein-Fano (cf.\ Definition~\ref{definition: PL Gorenstein Fano}) then each $P_i$ is a classical $2$-dimensional Gorenstein-Fano polytope, and they are related by a piecewise-linear map. 
  
  We note that such mutations of polytopes have been studied extensively in the context of, for instance, deformations of toric varieties, and complexity-$1$ $T$-varieties, and we expect our PL theory to be related to this work. More specifically, in the cases considered in this paper, we expect that the compactification $X_{\Aa_\MM}(\PP)$ (in the sense of \cite[Section 7.2]{EscobarHaradaManon-PL}) is an example of a simultaneous deformation of the toric varieties associated to the two chart images $P_1, P_2$ of the PL polytopes $\PP$ (of which we list multiple examples in Section~\ref{sec: GF polytope}).  Such deformations have been studied by Petracci \cite{Petracci2021} and Ilten \cite{Ilten2012, Ilten2011}. In particular, when the mutation between the charts of $\MM$ is applied to $\PP$, we suspect that it gives an instance of a mutation of polytopes as studied by Ilten \cite{Ilten2012}; the variety $X_{\Aa_\MM}(\PP)$ would then be the general fiber of the total space of the deformation associated to that mutation. Equations which cut out $X_{\Aa_\MM}(\PP)$ could then be deduced from work of Petracci \cite{Petracci2021}.

In general, the link between polyptych lattices  and algebraic geometry comes from our notion of a \textbf{detropicalization} $\Aa_\MM$ of a polyptych lattice $\MM$ \cite[Definition 6.3]{EscobarHaradaManon-PL} and its associated compactification $X_{\Aa_\MM}(\PP)$ with respect to a PL polytope $\PP$ \cite[Section 7.2]{EscobarHaradaManon-PL}. In addition, in \cite[Section 7]{EscobarHaradaManon-PL} we proved some first basic geometric properties of these compactifications, and in particular in \cite[Theorem 7.19]{EscobarHaradaManon-PL} we prove that if $\Aa_\MM$ is a UFD, then $X_{\Aa_\MM}(\PP)$ has finitely generated Cox ring. In this paper, we prove in Section~\ref{sec: detrop MMs} that $\MM_s$ is detropicalizable, by producing an explicit detropicalization $\Aa_s$ equipped with a valuation $\fv: \Aa_s \to \P_{\MM_s}$. (As a sidenote, we remark that this construction also shows that there exist examples of detropicalizations that are \emph{not} UFDs; indeed, it's easy to see that for $s=2$, the ring $\Aa_s$ is not a UFD.) Then, in Section~\ref{sec: cox ring}, by taking advantage of the fact that $\Aa_s$ is a UFD for $s=1$, we take \cite[Theorem 7.19]{EscobarHaradaManon-PL} one step further and give an explicit generators-and-relations presentation of the Cox ring of $X_{\Aa_s}(\PP)$ for a particular choice of $\PP$. Finally, we note that in Section~\ref{sec: cox ring} we additionally prove a general result that is not limited to the rank-$2$ examples $\MM_s$ considered in this note. Namely, in Proposition~\ref{prop: units}, we give a computation of the group of units in a detropicalization $\Aa_\MM$ for any (finite) polyptych lattice $\MM$ over $\Z$.

\subsection*{Acknowledgements}

Some of the results contained in this note, particularly those in Section~\ref{sec: MM_s} and~\ref{sec: detrop MMs}, were obtained in the Master's thesis of the first author, which was supervised by the third author. AC was additionally supported by an NSERC OGS scholarship. LE was supported by NSF CAREER grant DMS-2142656, and a Fields Institute Research Fellowship. MH was supported by a Canada Research Chair Award (Tier 2) and NSERC Discovery Grant 2019-06567. CM is supported by NSF DMS grant 2101911.

\section{Background}\label{sec: background}


In this section we briefly recount some of the basic definitions . For details we refer to \cite{EscobarHaradaManon-PL}. 

We begin with the definition of polyptych lattices.  Recall that a polyptych lattice is a generalization of the concept of lattices; a \textbf{lattice} is a free $\Z$-module of finite rank, and we often fix an identification of a lattice of rank $r$ with $\Z^r$. In this note, we restrict to polyptych lattices over $\Z$ (in the sense of \cite[Definition 2.1]{EscobarHaradaManon-PL}) so we drop the reference to coefficients.

\begin{Definition}\label{definition: polyptych lattice}
    Let $r$ be a positive integer. 
A \textbf{polyptych lattice of rank $r$ (over $\Z$)} is a pair $\mathcal{M} := (\{M_\alpha\}_{\alpha \in \mathcal{I}}, \{\mu_{\alpha,\beta}: M_\alpha \to M_\beta\}_{\alpha,\beta \in \mathcal{I}})$ consisting of a collection $\{M_\alpha\}_{\alpha \in \mathcal{I}}$ of free $\Z$-modules, each of rank $r$ and indexed by a set $\mathcal{I}$, and a collection of piecewise-linear maps $\mu_{\alpha,\beta}: M_\alpha \to M_\beta$ for every pair $(\alpha,\beta)$ of indices, satisfying the following conditions:
\begin{enumerate} 
\item $\mu_{\alpha,\alpha} = \mathrm{Id}_{M_\alpha}$ is the identity map for all $\alpha \in \mathcal{I}$, 
\item $\mu_{\alpha,\beta} = \mu_{\beta,\alpha}^{-1}$ for all pairs $\alpha,\beta \in \mathcal{I}$, and 
\item $\mu_{\beta,\gamma} \circ \mu_{\alpha,\beta} = \mu_{\alpha,\gamma}$ for all triples $\alpha,\beta,\gamma \in \mathcal{I}$. 
\end{enumerate} 
Note in particular that the requirement (2) above implies that all the maps $\mu_{\alpha,\beta}$ are invertible. We call the maps $\mu_{\alpha,\beta}$ \textbf{mutations}, and we call  $M_\alpha$ a \textbf{chart} of $\mathcal{M}$. When $\mathcal{I}$ is finite, we say $\mathcal{M}$ is a \textbf{finite} polyptych lattice. 
\exampleqed
\end{Definition}

    In this note, we focus on a class of examples in which $\lvert \mathcal{I} \rvert = 2$, so there are only $2$ charts, and the rank is $2$. In particular, all of the polyptych lattices appearing in this note are finite.

Given a polyptych lattice $\MM$, by slight abuse of notation we denote also by $\MM$ the quotient space 
\begin{equation}\label{eq: def elements of MM} 
\MM := \bigsqcup_{\alpha \in \mathcal{I}} M_\alpha \bigg/ \sim
\end{equation} 
where the equivalence relation is defined by $m_\alpha \sim m_\beta$, for $m_\alpha \in M_\alpha, m_\beta \in M_\beta$, precisely when $\mu_{\alpha,\beta}(m_\alpha)=m_\beta$.  An \textbf{element} of $\MM$ is an equivalence class in the quotient space in~\eqref{eq: def elements of MM}. The \textbf{$\alpha$-th chart map}  
is $\pi_\alpha: \MM \to M_\alpha, \quad m \mapsto m_\alpha$ and we call $\pi_\alpha(m)$ the \textbf{$\alpha$-th coordinate} of $m \in \MM$. 

    Unlike the situation of a classical lattice, there does not exist in general a well-defined operation of addition in $\MM$. Nevertheless, for $m,m' \in \MM$, and $\alpha \in \pi(\MM) = \mathcal{I}$, we may define 
    \begin{equation}\label{eq: addition in chart}
m +_\alpha m' := \pi_\alpha^{-1}(\pi_\alpha(m) + \pi_\alpha(m'))
    \end{equation}
   which we think of as ``addition in the chart $M_\alpha$''. 
  Using this, we can define ``points'' of $\MM$, as below. 

\begin{Definition}
 Let $\MM$ be a polyptych lattice. 
 A \textbf{point of $\MM$} is a  function $p: \MM \to \Z$ such that 
\begin{equation}\label{eq: def point min} 
p(m) + p(m') = \min\{p(m +_\alpha m') \,\mid\, \alpha \in \pi(\MM) \} \, \, \textup{ for all } \, m, m' \in \MM
\end{equation} 
The set of all such $p: \MM \to \Z$ is called \textbf{the space of points of $\MM$} and denoted $\Sp(\MM)$. 
\end{Definition}

Any point $p \in \Sp(\MM)$ induces a function $p_\alpha := p \circ \pi_\alpha^{-1}: M_\alpha \to \Z$ on the lattice $M_\alpha$; these are not linear in general. 

\begin{Definition}\label{definition: full PL}
We let $Sp(\MM,\alpha)$ denote the subset of points $p$ on $\MM$ such that $p_\alpha: M_\alpha \to \Z$ is linear. If $\Sp(\MM) = \cup_\alpha \Sp(\MM,\alpha)$, then we say that $\MM$ is \textbf{full}. 
\end{Definition}

We need some polyptych lattice analogues of some classical convex-geometric objects. Given a rank $r$ polyptych lattice $\MM$ we may define $\MM_\R$ by replacing $\Z^r$ with $\R^r$ in Definition~\ref{definition: polyptych lattice} and using the same mutation maps. 

\begin{Definition}\label{definition: PL cone} 
Let $\MM$ be a polyptych lattice. 
A \textbf{PL cone} is a subset $\mathcal{C} $ of $\MM_{\R}$ such that $\pi_\alpha(\mathcal{C}) \subseteq M_\alpha \otimes\R$ is a rational polyhedral cone for each $\alpha \in \pi(\MM)$ (cf.\ \cite[Definition 1.2.1, Definition 1.2.14]{CoxLittleSchenck}).  
\end{Definition}

The \textbf{dimension} of a PL cone $\mathcal{C}$ is the dimension of any chart image $\pi_\alpha(\mathcal{C})$. Given a PL cone $\mathcal{C}$, a \textbf{face} $\mathcal{C}'$ of $\mathcal{C}$ is a subset of $\mathcal{C}$ such that $\pi_\alpha(\mathcal{C}')$ is a face of $\mathcal{C}$ for all $\alpha \in \pi(\MM)$. A \textbf{facet} of a PL cone $\mathcal{C}$ is a face of dimension $\dim(\mathcal{C})-1$. Any face of $\mathcal{C}$ is itself a PL cone. 

\begin{Definition}\label{definition: PL fan}
Let $\MM$ be a polyptych lattice. A \textbf{PL fan in $\MM_{\R}$} is a finite collection $\Sigma $ of PL cones in $\MM_{\R}$ such that: 
\begin{enumerate} 
\item for every $\mathcal{C} \in \Sigma$ and every $\alpha \in \pi(\MM)$, the chart image $\pi_\alpha(\mathcal{C})$ is a rational polyhedral cone, 
\item for every $\mathcal{C} \in \Sigma$, each face of $\mathcal{C}$ is also in $\Sigma$, 
\item for all $\mathcal{C},\mathcal{C}' \in \Sigma$, the intersection $\mathcal{C} \cap \mathcal{C}'$ is a face of each, (and hence also in $\Sigma$). 
\end{enumerate}
The \textbf{support} of a PL fan is $\lvert \Sigma \rvert := \cup_{\mathcal{C} \in \Sigma} \mathcal{C}$. A PL fan in $\MM_{\R}$ is \textbf{complete} if $\lvert \Sigma \rvert = \MM_{\R}$. A PL fan $\Sigma'$ \textbf{refines} a PL fan $\Sigma$ if every $\mathcal{C}' \in \Sigma'$ is contained in a PL cone of $\Sigma$ and $\lvert \Sigma' \rvert = \lvert \Sigma \rvert$. 
\end{Definition}

We recall the definition of the PL fan $\Sigma(\MM)$ associated to a polyptych latice. For any pair $(\alpha,\beta) \in \mathcal{I}^2$, there exists a minimal fan $\Sigma(\MM, \alpha,\beta)$ in $M_\alpha \otimes\mathbb{R}$ such that, for each cone $C \in \Sigma(\MM, \alpha, \beta)$, the restriction $\mu_{\alpha,\beta} \vert_C: C \to \mathbb{R}$ is $\mathbb{R}$-linear. Let $\alpha$ be fixed. Let $\Sigma(\MM, \alpha)$ denote the common refinement of all $\Sigma(\MM,\alpha,\beta)$ as $\beta$ ranges over the finite set $\mathcal{I}=\pi(\MM)$. This is a fan in $M_\alpha \otimes \R$ which has the property that for any cone $C$ of $\Sigma(\MM, \alpha)$ and any $\beta \in \mathcal{I}$, the  mutation $\mu_{\alpha,\beta}$ restricts to $C$ to be linear. 
Now let $\MM_\R = \bigcup_{C \in \Sigma(\MM,\alpha)} \pi_\alpha^{-1}(C)$ be the decomposition of $\MM_\R$ into preimages of the cones in $\Sigma(\MM,\alpha)$. We call this decomposition \textbf{the PL fan of $\MM$}, and denote it by $\Sigma(\MM)$. It is shown in \cite[Lemma 2.10]{EscobarHaradaManon-PL} that this is indeed a PL fan. 

Given two polyptych lattices $\MM$ and $\NN$, we say that the two are strictly dual to each other if - roughly speaking - we can identify (the elements of) $\MM$ with $\Sp(\NN)$, and vice versa, and their PL fans are compatible. The precise version is below. 

\begin{Definition} 
\label{def_dual}
Let $\MM, \NN$ be polyptych lattices and $\v: \MM \to \Sp(\NN)$ and $\w: \NN \to \Sp(\MM)$ a pair of maps.  We say that $\v, \w$ are a \textbf{strict dual pairing} if: 
\begin{enumerate} 
\item[(1)] $\v(m)(n) = \w(n)(m)$ for all $n \in \NN, m \in \MM$, 
\item[(2)] $\v$ and $\w$ are both bijections, and
\item[(3)] the preimages $\v^{-1}\Sp_\R(\NN,\beta)$ (respectively $\w^{-1}\Sp_\R(\MM,\alpha)$) are precisely the maximal-dimensional faces of $\Sigma(\MM)$ (respectively $\Sigma(\NN)$). 
\end{enumerate} 
If $\MM$ has a  strict dual pairing with itself with respect to a single map $\mathbf{w}: \MM \to \Sp(\MM)$, we say $\MM$ is \textbf{(strictly) self-dual. }
\end{Definition}

It is shown in \cite[Lemma 3.5]{EscobarHaradaManon-PL} that for any finite polyptych lattice $\NN$, any point $p \in \Sp(\NN)$ extends naturally to a piecewise linear function, also denoted $p$, on $\NN_\R$; see \cite{EscobarHaradaManon-PL} for precise definitions. Let $P_\NN$ denote the set of piecewise linear functions on $\NN_\R$ generated by $\Sp(\NN)$ under the operations $+$ and $\mathrm{min}$; then this set $P_\NN$ is an idempotent $\Z_{\geq 0}$-semialgebra with respect to these operations. We refer to $P_\NN$ as the \textbf{point semialgebra of $\NN$}. We may equip $P_\NN$ with the partial order defined by $a \geq b$ if and only if $\min\{a,b\}=b$, where here the $\min$ is the pointwise minimum of functions.

For the purposes of this note, we need only define valuations with values in either $P_\NN$ or $\Z$, so we restrict to these cases. A valuation $\fv: \Aa \to P_\NN$ (resp. $\fv: \Aa \to \Z$) is an analogue of a classical discrete valuation on a field.  We have the following. 

\begin{Definition}\label{def_valuation}
Let $\Aa$ be a Noetherian $\K$-algebra which is an integral domain. 
    We say a map $\fv: \Aa \to P_\NN$ (resp. $\fv: \Aa \to \Z$) is a \textbf{valuation with values in $P_\NN$} (resp. $\Z$) if for all $f,g\in\Aa$ we have:
\begin{enumerate}
    \item $\fv(fg)=\fv(f)\otimes\fv(g)$, 
    \item $\fv(f+g)\ge \fv(f)\oplus\fv(g)$, 
    \item $\fv(cf)=\fv(f)$, for all $c\in \K^*$, and 
    \item $\fv(0)=\infty$. 
\end{enumerate}
\end{Definition}

We may now define detropicalizations of polyptych lattices in the case when $\MM$ has a strict dual. We restrict to this case since the examples in this note have strict duals. The definition in \cite{EscobarHaradaManon-PL} uses valuations valued in the canonical semialgebra $S_\MM$ of $\MM$ (which we have not defined here), but it is shown in \cite[Proposition 4.9]{EscobarHaradaManon-PL} that $S_\MM \cong P_\NN$ when $\MM$ and $\NN$ are strict duals, so here we may take the codomain to be $P_\NN$. 

\begin{Definition}\label{definition: detropicalization}
Let $\MM$ be a finite polyptych lattice. Assume that $\MM$ has a strict dual $\NN$. Let $\Aa_\MM$ be a Noetherian $\K$-algebra which is an integral domain. Let $\fv: \Aa_\MM \to P_\NN$ be a valuation with values in $P_\NN$. We say that the pair $(\Aa_\MM, \fv)$ is a \textbf{detropicalization of $\MM$} if every element of $\MM \cong \Sp(\NN)$ is in the image of $\fv$, and the Krull dimension of $\Aa_\MM$ equals the rank $r$ of $\MM$.
We say that a $\K$-vector space basis $\B$ of $\Aa_\MM$ is a \textbf{convex adapted basis} for $\fv: \Aa_\MM \to \P_\NN$ if $\fv(\sum \lambda_i \bb_i) = \bigoplus_i \fv(\bb_i) = \min_i \{\fv(\bb_i)\}$, for any finite collection $\lambda_i \in \K^*$ and $\bb_i \in \B$, and
$\fv(\bb) \in \Sp(\NN) \subset \P_\NN$ for all $\bb \in \B$.
\end{Definition}

Let $p \in \Sp(\MM)$ be a point of $\MM$. Let $a \in \Z$. The \textbf{PL half-space} with threshold $a$ associated to $p$ is
\begin{equation}\label{eq: def M half space} 
\HH_{p, a} := \{ m \in \MM_\R \, \mid \, p(m) \geq a\} \subset \MM_\R. 
\end{equation} 
A set $\PP$ is a \textbf{PL polytope} if it is compact and it is a finite intersection of PL half spaces, i.e., 
$$\PP = \bigcap_{i=1}^\ell \HH_{p_i, a_i}$$
for some collection of points $p_i \in \Sp(\MM)$ and $a_i \in \Z$. The \textbf{set of vertices $V(\PP)$} of $\PP$ is 
\begin{equation}\label{eq: def vertices of P}
V(\PP) := \{ m \in \MM_{\R}  \, \mid \, \exists \alpha \in \pi(\MM), \, \, \pi_\alpha(m)\,  \textup{ is a vertex of } \, \pi_\alpha(\PP)\}. 
\end{equation} 
Vertices need not be elements of $\MM$. 
We say that $\PP$ is an \textbf{integral PL polytope} if $\pi_\alpha(\PP)$ is an integral polytope in $M_\alpha \otimes \R$ (i.e., all its vertices are in $M_\alpha$) for every $\alpha \in \pi(\MM)$.

\begin{Definition}\label{definition: PL Gorenstein Fano} 
Let $\MM$ be a finite polyptych lattice over $\Z$. We say that a PL polytope $\PP$ in $\MM_\R$ is \textbf{chart-Gorenstein-Fano} if $\PP$ is a full-dimensional integral PL polytope, and, its PL half-space representation is of the form 
$$
\PP = \bigcap_{i=1}^{\ell} \HH_{p_i, -1}
$$
where $p_i \in \Sp(\MM)$ and $a_i=-1$ for all $i \in [\ell]$. 
\exampleqed
\end{Definition} 

Later in this manuscript, we give multiple explicit examples of chart-Gorenstein-Fano PL polytopes. 
Moreover, following \cite{EscobarHaradaManon-PL} and in the setting when $\MM$ possesses a strict dual, we also have a theory of dual polytopes. Indeed, in the presence of a strict dual $\NN$ to $\MM$ we define 
the \textbf{support function} $\psi_\PP:\NN_{\R} \to \R$ of $\PP$ as
\begin{equation}\label{eq: def support function}
\psi_\PP(-) := \min\{\v(u)(-) \mid u \in \PP\}.
\end{equation} 
Then the \textbf{dual PL polytope $\PP^\vee$ to $\PP$} (with respect to the strict dual $\NN$) is 
\begin{equation}\label{eq: def dual Pcheck}
\PP^\vee := \{n \in \NN_\R \, \mid \, \psi_\PP(n) \geq -1\} \subset \NN_\R.
\end{equation}
It is shown in \cite[Lemma 5.16]{EscobarHaradaManon-PL} that $\PP^\vee$ can be expressed as 
 \begin{equation}\label{eq: Pvee as N polytope}
  \PP^\vee = \bigcap_{m \in V(\PP)} \HH_{\v(m),-1}
 \end{equation}
 and $\PP^\vee$ is compact in $\NN_\R$. 

 \begin{Remark} 
It should be emphasized here that, in our setting of PL polytopes, it is not necessarily true that the dual of a chart-Gorenstein-Fano polytope is an integral polytope.  See Example~\ref{example: dual not integral}. We intend to explore these subtleties of PL convex geometry in future work. 
 \end{Remark}

\section{The rank-$2$ polyptych lattices $\MM_s$ and its space of points}\label{sec: MM_s}

As mentioned in the introduction, one of the goals of this note is to construct a concrete family of rank-$2$ polyptych lattices which serve to illustrate the abstract theory introduced in \cite{EscobarHaradaManon-PL}. In this section, we will define our family of polyptych lattices, compute the associated spaces of points, and show that they are full and strictly self-dual.

Let $s$ be a non-negative integer. We define a polyptych lattice $\MM_s$ associated to $s$ as follows.
There are two coordinate charts $M_1$ and $M_2$, both isomorphic to $\Z^2$, so the rank $r$ is $2$ and the set of charts $\mathcal{I} = \{1,2\}$. We fix once and for all identifications of $M_1$ and $M_2$ with $\Z^2$, and use coordinates $(x,y) \in \Z^2$ on $M_1$, and $(u,v) \in \Z^2$ on $M_2$. 
To specify the mutations, it suffices to describe the piecewise-linear mutation map $\mu_{1,2}: M_1 \rightarrow M_2$ as follows: 
\begin{equation}\label{eq: def mutation M_s}
\begin{split}
\mu_{1,2}(x,y) & = (\min\{0, s \, y\} - x, y) \\
& = \begin{cases} 
(-x, y) \quad \quad \textup{ if } y \geq 0 \\
(s \, y-x, y) \quad \textup{ if } y < 0.  
\end{cases}
\end{split}
\end{equation}
It is straightforward that $(\mu_{1,2})_{\R}$ is continuous and we can see that the domains of linearity of $\mu_{1,2}$ are the upper- and lower-half spaces $\{y \geq 0\}$ and $\{y \leq 0\}$ of $M_1$, where the mutation may be represented, respectively,  by the matrices $\begin{bmatrix} -1 & 0 \\ 0 & 1 \end{bmatrix}$ and $\begin{bmatrix} -1 & s \\ 0 & 1 \end{bmatrix}$. 
It is also straightforward to compute that the inverse mutation $\mu_{2,1}: M_2 \to M_1$ is given by the same formula, 
$$
\mu_{2,1}(u,v) = (\min\{0,s\, v\}-u,v)
$$
and thus also has two domains of linearity, $\{v \geq 0\}$ and $\{v \leq 0\}$. Since there are only 2 charts, we will also refer to $(x,y)$ and $(u,v)$ as the \emph{first and second coordinates} respectively (of an element of $\MM$), and $M_1$ as the \emph{first chart}, $M_2$ as the \emph{second chart}.  The maximal cones of the PL fan $\Sigma(\MM)$ consists of the two disjoint subsets $H_{+} := \pi_1^{-1}(\{y \geq 0\}) = \pi_2^{-1}(\{v \geq 0\}) \subset \MM_s$ and $H_{-} := \pi_1^{-1}(\{y \leq 0\}) = \pi_2^{-1}(\{v \leq 0\}) \subset \MM_s$.

The following lemma is proven in \cite{EscobarHaradaManon-PL}.

\begin{Lemma} 
Let $\MM$ be a finite polyptych lattice and let $p \in \Sp(\MM)$. Let $C$ be a cone in the PL fan $\Sigma(\MM)$ of $\MM$. Then $p$ is linear when restricted to $C$. 
\end{Lemma}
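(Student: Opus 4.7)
The plan is to exploit the point axiom~\eqref{eq: def point min} together with the defining linearity property of the PL fan $\Sigma(\MM)$. First I unfold what a cone in $\Sigma(\MM)$ looks like; next I show that on such a cone the chart additions $+_\alpha$ all coincide, which forces $p$ to be additive; finally I promote that additivity to honest $\R$-linearity.

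So fix a cone $\mathcal{C} \in \Sigma(\MM)$, and suppose first that $\mathcal{C}$ is top-dimensional. By construction of $\Sigma(\MM)$ there exist a chart $\alpha \in \mathcal{I}$ and a cone $C \in \Sigma(\MM,\alpha) \subset M_\alpha \otimes \R$ with $\mathcal{C} = \pi_\alpha^{-1}(C)$, and every mutation $\mu_{\alpha,\beta}$ restricts to an $\R$-linear map on $C$. The key observation is that on $\mathcal{C}$ the chart additions all agree: for $m, m' \in \mathcal{C} \cap \MM$, convexity of $C$ gives $\pi_\alpha(m) + \pi_\alpha(m') \in C$, and the linearity of $\mu_{\alpha,\beta}$ on $C$ yields
\[
\pi_\beta(m +_\alpha m') = \mu_{\alpha,\beta}\bigl(\pi_\alpha(m) + \pi_\alpha(m')\bigr) = \mu_{\alpha,\beta}(\pi_\alpha(m)) + \mu_{\alpha,\beta}(\pi_\alpha(m')) = \pi_\beta(m +_\beta m'),
\]
so $m +_\alpha m' = m +_\beta m'$ in $\MM$ for every $\beta \in \mathcal{I}$.

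Consequently the minimum in~\eqref{eq: def point min} is attained at every chart, so $p(m) + p(m') = p(m +_\alpha m')$ for all $m, m' \in \mathcal{C} \cap \MM$. Translating via $p_\alpha = p \circ \pi_\alpha^{-1} \colon M_\alpha \to \Z$, this reads $p_\alpha(x+y) = p_\alpha(x) + p_\alpha(y)$ for all $x, y \in C \cap M_\alpha$; that is, $p_\alpha$ is additive on the saturated affine monoid $C \cap M_\alpha$. Such a $\Z$-valued additive function extends uniquely to a $\Z$-linear functional on the lattice generated by $C \cap M_\alpha$, which equals $M_\alpha$ in the top-dimensional case. Hence there is $\ell \in \Hom(M_\alpha, \Z)$ with $p_\alpha = \ell$ on $C \cap M_\alpha$, and since $p$ extends to a piecewise linear function on $\MM_\R$ by \cite[Lemma 3.5]{EscobarHaradaManon-PL}, the extended $p_\alpha$ must coincide with the $\R$-linear extension of $\ell$ on all of $C$. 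Thus $p$ is linear on $\mathcal{C}$; the lower-dimensional case follows either by realising $\mathcal{C}$ as a face of a top-dimensional cone of $\Sigma(\MM)$, or by rerunning the same argument with the lattice span of $C$ in place of $M_\alpha$.

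The main obstacle is the identity $m +_\alpha m' = m +_\beta m'$ on $\mathcal{C}$, which is really the content of the lemma: it is precisely the place where the defining property of $\Sigma(\MM)$ -- that every mutation becomes $\R$-linear on each cone of the refinement $\Sigma(\MM,\alpha)$ -- is used in an essential way. Once the $\min$ in~\eqref{eq: def point min} collapses to a single value, the rest is the standard passage from additivity on a saturated affine monoid to $\R$-linearity on the ambient rational polyhedral cone.
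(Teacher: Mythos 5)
The paper does not give its own proof of this lemma; it simply records that the result is established in \cite{EscobarHaradaManon-PL}, so a direct comparison is not possible. That said, your argument is correct and hits what must be the essential point: on a cone $\mathcal{C} = \pi_\alpha^{-1}(C)$ of the PL fan, every mutation $\mu_{\alpha,\beta}$ is linear on $C$, which makes all the chart additions $+_\alpha$ coincide on $\mathcal{C}$, collapses the minimum in the point axiom~\eqref{eq: def point min} to a single term, and exhibits $p_\alpha$ as additive on the saturated affine monoid $C \cap M_\alpha$; from there, extension to a $\Z$-linear functional on the group generated by $C \cap M_\alpha$ (which is all of $M_\alpha$ when $C$ is full-dimensional) is routine. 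The one place that deserves an extra line is the last step: when you pass from the linear $\ell$ on $C \cap M_\alpha$ to $\R$-linearity of the extended $p_\alpha$ on all of $C$, you implicitly use that the PL extension of a point is positively homogeneous --- this follows from the point axiom with $m = m'$, which gives $p(m +_\alpha m) = 2p(m)$ for every chart and hence $p(nm) = np(m)$ for all $n \in \N$ --- and it is this homogeneity that lets you propagate the identity $p_\alpha = \ell$ from lattice points to all rational points of $C$ before invoking continuity. Your reduction of the lower-dimensional cones to the full-dimensional case by passing to a maximal cone containing them is fine because $\Sigma(\MM)$ is complete.
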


Using the above lemma, we can explicitly compute the space of points $\Sp(\MM_s)$ of $\MM_s$. 
Indeed, by the lemma, we know that for any point $p \in \Sp(\MM_s)$, the induced functions $p_i = p \circ \pi_i^{-1}$ must be linear on the upper-half and lower-half spaces of $M_i$, so both $p_1$ and $p_2$ are completely specified by two linear functions on these two half-spaces. With this in mind, we set the following notation. Let $\{\mathbf{e}_1,\mathbf{e}_2\}$ denote the standard basis for $\Z^2$.  
Consider the following elements of $\MM_s$: ${\mathfrak{e}}_1:=\pi_1^{-1}(\mathbf{e}_1)$, $\mathfrak{e}_2:=\pi_1^{-1}(\mathbf{e}_2)$, and $\mathfrak{e}'_2:=\pi_1^{-1}(-\mathbf{e}_2)$. Note also that, since an element $m \in \MM$ is completely determined by its first coordinate $\pi_1(m)$, any function $p: \MM \to \Z$ is uniquely determined by the induced function $p_1 := p \circ \pi_1^{-1}$. We take advantage of this observation in the proposition below. 
We have the following.

\begin{Proposition}\label{proposition: points of Sp MMs}
Let $p(\mathfrak{e}_1)$, $p(\mathfrak{e}_2)$, and $p(\mathfrak{e}'_2)$ denote integers chosen such that $p(\mathfrak{e}_2)+p(\mathfrak{e}'_2)=\min\{0,s \cdot p(\mathfrak{e}_1)\}$. 
Let $p: \MM_s \to \Z$ be the function uniquely specified by  
\begin{equation}\label{eq_ex_point}
    p_1(x,y) := p \circ \pi_1^{-1}(x,y)=
    \begin{cases}
        x\cdot p(\mathfrak{e}_1)-y\cdot p(\mathfrak{e}'_2), & y\le 0
        \\
        x \cdot p(\mathfrak{e}_1)+y \cdot p(\mathfrak{e}_2), & y\ge 0
    \end{cases}
.\end{equation}
Then $p$ is a point on $\MM_s$, and, any point in $\Sp(\MM_s)$ is of this form. 
In particular, $\MM_s$ is full, and 
 $\Sp(\MM_s) \otimes \mathbb{R}$ can be identified with the subset $\mathcal{T}_s$ of $\mathbb{R}^3$ defined as 
 \begin{equation}\label{eq: def Tau_s}
 \mathcal{T}_s := \{(a,b,c) \in \R^3 \, \mid \, a+b = \min\{0,s\cdot c\}\} \subset \R^3.
 \end{equation}
\end{Proposition}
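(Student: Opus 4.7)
The plan is to first use the preceding lemma to constrain the shape of any point, then extract the algebraic relation among the three chosen integers from the point condition at one carefully chosen pair, and finally verify that every such triple really does yield a valid point via a short case analysis.

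By the preceding lemma, every point $p$ on $\MM_s$ is linear on each maximal cone of $\Sigma(\MM_s)$. Since the two maximal cones are the half-spaces $H_+ = \pi_1^{-1}(\{y \geq 0\})$ and $H_- = \pi_1^{-1}(\{y \leq 0\})$, the induced function $p_1 = p \circ \pi_1^{-1}$ must be $\Z$-linear on each of $\{y \geq 0\}$ and $\{y \leq 0\}$ in $M_1$, and the two pieces must agree on the common boundary $\{y = 0\}$. Such a $p_1$ is determined uniquely by the three integer values $p(\mathfrak{e}_1)$, $p(\mathfrak{e}_2)$, $p(\mathfrak{e}'_2)$ and has exactly the form \eqref{eq_ex_point}. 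This reduces the problem to identifying precisely which triples of integers arise, and showing that each such triple does define a point.

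For the necessity of the relation $p(\mathfrak{e}_2) + p(\mathfrak{e}'_2) = \min\{0, s \cdot p(\mathfrak{e}_1)\}$, I would apply the defining min-condition \eqref{eq: def point min} to the pair $m = \mathfrak{e}_2, m' = \mathfrak{e}'_2$. A direct computation using $\mu_{1,2}$ and $\mu_{2,1}$ shows $m +_1 m' = \pi_1^{-1}(0,0)$ while $m +_2 m' = \pi_1^{-1}(s, 0)$; evaluating $p$ on these gives $0$ and $s \cdot p(\mathfrak{e}_1)$ respectively, and the min-condition then forces exactly the desired relation. For the sufficiency, given integers $(a, b, c) := (p(\mathfrak{e}_1), p(\mathfrak{e}_2), p(\mathfrak{e}'_2))$ satisfying $b + c = \min\{0, sa\}$, I would verify that the function defined by \eqref{eq_ex_point} satisfies the point condition for every pair $m, m' \in \MM_s$ via case analysis on the signs of the second coordinates of $\pi_1(m)$ and $\pi_1(m')$. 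When both have the same sign, one checks that $m$ and $m'$ lie in a common linear domain of $p_1$ and that $m +_1 m' = m +_2 m'$, making the condition immediate. The only substantive case is when the second coordinates have opposite signs; here an explicit computation reduces the two differences $p(m +_\alpha m') - (p(m) + p(m'))$ for $\alpha \in \{1, 2\}$ to linear expressions in $(b + c)$ and $(b + c - sa)$, and the constraint $b + c = \min\{0, sa\}$ is exactly what is needed to ensure both are non-negative with one of them attaining zero. I expect this case analysis to be the main technical obstacle, though it is routine given the simple form of the mutation $\mu_{1,2}$.

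Finally, fullness and the identification with $\mathcal{T}_s$ are quick corollaries. The two pieces in \eqref{eq_ex_point} glue to a single linear function on $M_1$ precisely when $b = -c$, i.e., $b + c = 0$; a symmetric computation of $p_2 = p_1 \circ \mu_{2,1}$ shows $p_2$ is globally linear on $M_2$ precisely when $b + c = sa$. Since the constraint $b + c = \min\{0, sa\}$ forces $b + c$ to equal either $0$ or $sa$, every point lies in $\Sp(\MM_s, 1) \cup \Sp(\MM_s, 2)$, establishing fullness. Extending scalars from $\Z$ to $\R$ and relabeling coordinates then identifies $\Sp(\MM_s) \otimes \R$ with the surface $\mathcal{T}_s \subset \R^3$ of \eqref{eq: def Tau_s}.
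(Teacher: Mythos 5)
Your proposal is correct and follows essentially the same path as the paper's proof: use the preceding lemma to pin down the form~\eqref{eq_ex_point}, extract the constraint $p(\mathfrak{e}_2)+p(\mathfrak{e}'_2)=\min\{0,s\cdot p(\mathfrak{e}_1)\}$ from the defining min-condition, and deduce fullness from the observation that the constraint forces global linearity in chart $1$ (when $b+c=0$) or chart $2$ (when $b+c=sa$). Your one genuine streamlining is the necessity direction: rather than the paper's general case analysis with $m\in H_+$, $m'\in H_-$ (which derives the relation from requiring equality for all $y'\le 0$), you evaluate the point condition at the single pair $(\mathfrak{e}_2,\mathfrak{e}'_2)$, where $m+_1m'=\pi_1^{-1}(0,0)$ and $m+_2m'=\pi_1^{-1}(s,0)$, and read the relation off directly; this is correct and slightly more economical. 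For sufficiency your case analysis matches the paper's (your expressions $y'(b+c)$ and $y'(b+c-sa)$ are exactly what appears, and the constraint ensures the minimum is zero), so the two arguments coincide there.
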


\begin{proof} 
We first show that a function $p: \MM_s \to \Z$ defined by~\eqref{eq_ex_point} is in $\Sp(\MM_s)$. To prove this, we must check the condition~\eqref{eq: def point min}. We may compute in terms of $p_1$ instead of $p$, where the requirement becomes that for all $(x,y) (x',y') \in M_1$, we have 
\begin{equation}\label{eq: LHS for MM_s}
p_1(x,y) + p_1(x',y') = \min\{p_1(x+x',y+y'), p_1(\min\{0,s(y+y')\} - \min\{0,sy\} - \min\{0,sy'\}+x+x', y+y')\}
\end{equation}
where the second expression in the minimum is equal to $\mu_{2,1}(\mu_{1,2}(x,y)+\mu_{1,2}(x',y'))$. (This is the first coordinate of the addition of $\pi_1^{-1}(x,y)$ and $\pi_1^{-1}(x',y')$ in the chart $M_2$ as in~\eqref{eq: addition in chart}.)

To check~\eqref{eq: LHS for MM_s}, we take cases. Note that we already know that $p$ is linear when restricted to $H_+$ or $H_-$ so we only need to check the cases in which the $m$ and $m'$ are contained in distinct cones of linearity. 
Consider first the case when $m \in H_+, m' \in H_-$, and $m +_i m' \in H_+$ for $i=1,2$. The LHS of~\eqref{eq: LHS for MM_s} is then 
\begin{equation}\label{eq: final LHS}
x p(\mathfrak{e}_1) + y p(\mathfrak{e}_2) + x' p(\mathfrak{e}_1) - y' p(\mathfrak{e}'_2).
\end{equation}
The RHS of~\eqref{eq: LHS for MM_s} can be simplified using that $y \geq 0, y' \leq 0, y+y' \geq 0$, and we obtain
$$
\min\{(x+x')p(\mathfrak{e}_1)+(y+y')p(\mathfrak{e}_2), (-sy' + x+x')p(\mathfrak{e}_1) + (y+y')p(\mathfrak{e}_2)\}
$$
which is in turn equal to 
\begin{equation}\label{eq: final RHS}
 (x+x')p(\mathfrak{e}_1) + (y+y')p(\mathfrak{e}_2) + \min\{0, -sy' \, p(\mathfrak{e}_1)\} = 
  (x+x')p(\mathfrak{e}_1) + (y+y')p(\mathfrak{e}_2) -sy' \min\{0,p(\mathfrak{e}_1)\}
\end{equation}
where the last equality follows because $s \geq 0, y' \leq 0$ implies $-sy' \geq 0$. 
Setting~\eqref{eq: final LHS} equal to~\eqref{eq: final RHS} the condition becomes 
$$
-y' \, p(\mathfrak{e}'_2) = y' \, p(\mathfrak{e}_2) - sy' \min\{0, p(\mathfrak{e}_1)\}
$$
where this equality must hold for any $y' \leq 0$. This is true if and only if $p(\mathfrak{e}_2)+p(\mathfrak{e}'_2) = s \cdot \min\{0,p(\mathfrak{e}_1)\}$. 
Checking the other case when $m \in H_+, m' \in H_-$ and $m+_i m \in H_{-}$ is similar and is left to the reader. In this case we also obtain that the condition of being a point is satisfied if and only if $p(\mathfrak{e}_2)+p(\mathfrak{e}'_2) = s \cdot \min\{0,p(\mathfrak{e}_1)\}$. Thus we conclude that $p$ is a point in $\MM_s$, and moreover, if $p$ is a point in $\MM_s$, then the values $p(\mathfrak{e}_1), p(\mathfrak{e}_2), p(\mathfrak{e}'_2)$, which correspond go the values of $p$ on the elements $\mathfrak{e}_1,\mathfrak{e}_2,\mathfrak{e}'_2$ respectively, must satisfy $p(\mathfrak{e}_2)+p(\mathfrak{e}'_2) = s \cdot \min\{0,p(\mathfrak{e}_1)\}$. This proves the first statatement of the proposition.  

To see that $\MM_s$ is full, it suffices to show that any point $p$ in $\Sp(\MM_s)$ is linear in either the first chart or the second chart. We know that $p(\mathfrak{e}_2)+p(\mathfrak{e}'_2) = s \cdot \min\{0,p(\mathfrak{e}_1)\}$, so let us take cases. Suppose $p(\mathfrak{e}_2)+p(\mathfrak{e}'_2) = 0$. Then $p(\mathfrak{e}_2) = - p(\mathfrak{e}'_2)$. From the definition of $p$ from~\eqref{eq_ex_point} it follows immediately that, in this case, $p_1 = p \circ \pi_1^{-1}$ is linear on all of $M_1$, so $p \in \Sp(\MM,1)$. On the other hand, suppose that $p(\mathfrak{e}_2) + p(\mathfrak{e}'_2) = s  \cdot p(\mathfrak{e}_1) <0$. Then $p(\mathfrak{e}_2)=s \cdot p(\mathfrak{e}_1)-p(\mathfrak{e}'_2)$ so that we may write 
\begin{equation*}
    p((x,y),\mu_{12}(x,y))=
    \begin{cases}
        x\cdot p(\mathfrak{e}_1)-y\cdot p(\mathfrak{e}'_2), & y\le 0
        \\
        (x +sy) \cdot p(\mathfrak{e}_1) - y \cdot p(\mathfrak{e}'_2), & y\ge 0
    \end{cases}
.\end{equation*}
Rewriting this in the coordinates for the second chart, we have 
\begin{equation}\label{eq_ex_point M2}
    p(\mu_{21}(u,v),(u,v))=
    \begin{cases}
        (sv-u) \cdot p(\mathfrak{e}_1)-v \cdot p(\mathfrak{e}'_2), & v \le 0
        \\
        (sv-u) \cdot p(\mathfrak{e}_1) - v \cdot p(\mathfrak{e}'_2), & v\ge 0
    \end{cases}
\end{equation}
which shows that, in this case, $p_2 := p \circ \pi_2^{-1}$ is linear, i.e. $p$ is linear in the second chart, and $p \in \Sp(\MM_s,2)$. Thus any point in $\Sp(\MM_s)$ is linear in one of the coordinate charts, so $\Sp(\MM_s) = \Sp(\MM_s,1) \cup \Sp(\MM_s,2)$ and $\MM_s$ is full. Finally, it follows from the above that the space of points $\Sp(\MM_s) \otimes \R$ may be identified with the set of parameters $\{(a,b,c): a+b=\min\{0,s \cdot c\}\} \subset \R^3$, given by the choices of the values $p(\mathfrak{e}_1), p(\mathfrak{e}_2), p(\mathfrak{e}'_2)$, so the last claim follows. 
\end{proof}

\begin{Remark} 
In the proposition above, we express a point $p$ in $\Sp(\MM_s)$ as a function of the variables of the first coordinate chart $M_1$. For later computations, it will also be useful to express $p$ in terms of the $M_2$ coordinates. It is straightforward to compute that, given the parameters $p(\mathfrak{e}_2), p(\mathfrak{e}'_2), p(\mathfrak{e}_1)$ as in Proposition \ref{proposition: points of Sp MMs}, $p \in \Sp(\MM_s)$ expressed in $M_2$ coordinates $(x',y')$ is given by 
\begin{equation}\label{eq: point in M2 coord} 
p(\mu_{2,1}(z',y'),(x',y')) = 
\begin{cases} 
- x' \cdot p(\mathfrak{e}_1) + y' \cdot (s \cdot p(\mathfrak{e}_1) - p(\mathfrak{e}'_2)), \, \quad \textup{ if }  \, y' \leq 0 \\ 
- x' \cdot p(\mathfrak{e}_1) + y' \cdot p(\mathfrak{e}_2), \quad \quad \quad \quad \quad \textup{ if } \, y' \geq 0.
\end{cases} 
\end{equation}
\exampleqed
\end{Remark}

We next claim that $\MM_s$ is self-dual in the sense of Definition \ref{def_dual}, i.e., there exists a strict dual pairing of $\MM_s$ with itself. This means that we seek a bijective mapping 
$\mathbf{w}_s: \MM_s \to \Sp(\MM_s)$ such that for any $m, m' \in \MM_s$, we have 
$$
\mathbf{w}_s(m)(m') = \mathbf{w}_s(m')(m)
$$
and such that preimages of $\Sp(\MM_s,i)$ for $i=1,2$ land precisely on the maximal-dimensional faces of $\Sigma(\MM_s)$. 
Recall that by Proposition~\ref{proposition: points of Sp MMs} we know that a point $p$ in $\Sp(\MM_s)$ is completely determined by a triple $(p(\mathfrak{e}_2), p(\mathfrak{e}'_2),p(\mathfrak{e}_1))$ of integers in $\mathcal{T}_s$. More precisely we have a bijection 
\begin{equation}\label{eq: Ts identified with Sp MMs}
\psi: \Sp(\MM_s) \to \mathcal{T}_s \cap \Z^3, \quad \quad 
p \mapsto (p(\mathfrak{e}_2), p(\mathfrak{e}'_2),p(\mathfrak{e}_1)).
\end{equation}
 For the remainder of this discussion we identify $\Sp(\MM_s)$ with $\mathcal{T}_s$ via $\psi$ and as such, we will define below a function $\mathbf{w}_s: \MM_s \to \mathcal{T}_s$ and interpret this as a mapping to $\Sp(\MM_s)$. 

Let $m=\pi_1^{-1}(x,y)$. 
We then define
\begin{equation}\label{eq: def ws}
\mathbf{w}_s(m)=\mathbf{w}_s(\pi_1^{-1}(x,y)) = 
\begin{cases} 
(x, -x, y) \quad \quad \textup{ if } \, \, y \geq 0 \\
(x, sy-x, y) \quad \textup{ if } \, \, y \leq 0. 
\end{cases} 
\end{equation}

\begin{Lemma} 
The map $\mathbf{w}_s$ of~\eqref{eq: def ws} defines a strict self-dual pairing of $\MM_s$ with itself. 
\end{Lemma}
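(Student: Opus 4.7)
The plan is to verify the three conditions (1)--(3) of Definition~\ref{def_dual} for the case $\mathbf{v} = \mathbf{w} = \mathbf{w}_s$. The formulas for $\mathbf{w}_s$ in~\eqref{eq: def ws} are given in terms of the first chart and implicitly use the identification $\psi: \Sp(\MM_s) \to \mathcal{T}_s$ of~\eqref{eq: Ts identified with Sp MMs}, so before anything else I would do a quick consistency check: (a) the two branches of~\eqref{eq: def ws} agree at $y=0$, where both yield $(x,-x,0)$, so the map is well-defined on $\MM_s$; and (b) the output triples lie in $\mathcal{T}_s$, since for $y \geq 0$ we have $a+b = x+(-x) = 0 = \min\{0,sy\}$, while for $y \leq 0$ we have $a+b = x + (sy-x) = sy = \min\{0,sy\}$. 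Hence $\mathbf{w}_s$ does land in $\Sp(\MM_s)$.

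For bijectivity (condition (2)), I would exhibit the inverse explicitly: given $(a,b,c) \in \mathcal{T}_s \cap \Z^3$, the element $\pi_1^{-1}(a,c)$ is mapped by $\mathbf{w}_s$ to $(a,-a,c)$ when $c\geq 0$ and to $(a,sc-a,c)$ when $c\leq 0$; in either case the defining equality $a+b = \min\{0,sc\}$ forces the second coordinate of the image to equal $b$. Thus the inverse is $(a,b,c) \mapsto \pi_1^{-1}(a,c)$, which is well-defined on the overlap $c=0$ for the same reason as $\mathbf{w}_s$ itself.

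The heart of the argument is the symmetry condition (1), $\mathbf{w}_s(m)(m') = \mathbf{w}_s(m')(m)$, which I would verify by a four-case analysis on the signs of $y$ and $y'$, where $m = \pi_1^{-1}(x,y)$ and $m' = \pi_1^{-1}(x',y')$. In each case one substitutes the parameter triple for $\mathbf{w}_s(m)$ given by~\eqref{eq: def ws} into the evaluation formula~\eqref{eq_ex_point}. The three mixed/aligned cases $(y,y' \geq 0)$, $(y\geq 0,y'\leq 0)$, $(y \leq 0,y'\geq 0)$ all collapse to $xy' + x'y$, which is manifestly symmetric. The interesting case is $y,y' \leq 0$: here the parameter $p(\mathfrak{e}'_2) = sy-x$ produces a cross-term, and the evaluation yields $xy' + x'y - syy'$, which is again symmetric in the pairs $(x,y)$ and $(x',y')$. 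I expect this last case to be the main bookkeeping step.

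Finally, for condition (3), note that the characterization from the proof of Proposition~\ref{proposition: points of Sp MMs} says a point $p$ lies in $\Sp(\MM_s,1)$ iff $a+b = 0$ and in $\Sp(\MM_s,2)$ iff $a+b = sc$; equivalently (for $s > 0$), these correspond to $c \geq 0$ and $c \leq 0$ respectively. Reading this off~\eqref{eq: def ws}, $\mathbf{w}_s(\pi_1^{-1}(x,y))$ lies in $\Sp_\R(\MM_s,1)$ exactly when $y \geq 0$ and in $\Sp_\R(\MM_s,2)$ exactly when $y \leq 0$. Therefore $\mathbf{w}_s^{-1}(\Sp_\R(\MM_s,1)) = H_+$ and $\mathbf{w}_s^{-1}(\Sp_\R(\MM_s,2)) = H_-$, which are precisely the two maximal-dimensional cones of $\Sigma(\MM_s)$ described just after~\eqref{eq: def mutation M_s}. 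Together the three verifications establish that $\mathbf{w}_s$ is a strict self-dual pairing.
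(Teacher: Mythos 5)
Your proposal is correct and follows essentially the same route as the paper: the same case analysis on the signs of $y,y'$ for the symmetry condition (1), with the same resulting expressions $xy'+x'y$ and $xy'+x'y-syy'$, and the same identification of $\mathbf{w}_s^{-1}(\Sp_\R(\MM_s,i))$ with $H_\pm$ for (3). Your treatment of (2) via an explicit inverse and your preliminary well-definedness checks are slightly more detailed than the paper's (which argues injectivity/surjectivity directly from Proposition~\ref{proposition: points of Sp MMs}), but this is only a matter of presentation.
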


\begin{proof} 
We must check the conditions (1),(2),(3) of Definition~\ref{def_dual} for $\MM=\NN=\MM_s$ and $\mathbf{v}=\mathbf{w}$, where $\mathbf{w}$ is defined in~\eqref{eq: def ws}. 

We first prove (1). We take cases. First suppose $m,m' \in H_+$.  Then $m=\pi_1^{-1}(x,y), m'=\pi_1^{-1}(x',y')$, where $y,y' \geq 0$. To check that $\mathbf{w}_s(m)(m') = \mathbf{w}_s(m')(m)$, we compute both sides. The LHS is 
$$(\mathbf{w}_s(\pi_1^{-1}(x,y))(\pi_1^{-1}(x',y')) = y \cdot x' + x \cdot y'$$
because $\mathbf{w}_s(\pi_1^{-1}(x,y))$ is defined to be $(x,-x,y)$, i.e. $p(\mathfrak{e}_2)=x, p(\mathfrak{e}'_2)=-x, p(\mathfrak{e}_1)=y$, so the computation follows from~\eqref{eq_ex_point}. The RHS may similarly computed to be $y' \cdot x + x' \cdot y$, and hence the equality holds. 

Next suppose that $m,m' \in H_{-}$. In this case we have $m=\pi_1^{-1}(x,y), m'=\pi_1^{-1}(x',y'))$ with $y \leq 0$, and by definition $\mathbf{w}_s(\pi_1^{-1}(x,y)) = (x-sy, -x, y) \in \mathcal{T}_s$. It follows from~\eqref{eq_ex_point} that we have 
$$
\mathbf{w}_s(\pi_1^{-1}(x,y))(\pi_1^{-1}(x',y')) = yx' - syy' + xy'
$$
The RHS may be computed similarly to be 
$$
\mathbf{w}_s((x',y'),(sy'-x',y'))((x,y),(sy-x,y)) = y'x - sy' \, y + x'y
$$
so the two sides are equal, as desired. 
Finally, for the case $m \in H_+, m' \in H_-$, similar computations show that the LHS is equal to $yx' + xy'$ and the RHS is equal to $y'x + x'y$, so they are again equal. By symmetry, the equality holds also for the case $m \in H_-, m' \in H_+$. 
 This concludes the proof of (1). 

The condition (2) of Definition~\ref{def_dual} follows immediately since the map is evidently injective, since the three parameters $p(\mathfrak{e}_1), p(\mathfrak{e}_2), p(\mathfrak{e}'_s)$ completely determine $p$, and is surjective by the claim of Proposition~\ref{proposition: points of Sp MMs}. 

It remains to prove the condition (3). We have seen in the discussion above that $\Sigma(\MM_s)$ consists of the two cones of linearity $H_+$ and $H_-$. Moreover, in the proof of Proposition \ref{proposition: points of Sp MMs} we saw that $\Sp(\MM_s,1) = \{ p \, \mid \, p(\mathfrak{e}_2)+p(\mathfrak{e}'_2) = 0 = \min\{0,s \cdot c\}\}$. In other words, in terms of coordinates on $\mathcal{T}_s$, the subset $\Sp(\MM_s,1)$ corresponds to $\{c \geq 0\} = \{a+b = \min\{0, s \cdot c\}=0\}$. Now from~\eqref{eq: def ws} it follows that the preimage under $\mathbf{w}_s$ of the subset $\{c \geq 0\}$ precisely $\{y \geq 0\} = H_+$. By a similar argument, $\Sp(\MM_s,2)$ is identified with $\{c \leq 0\} = \{a+b = \min\{0,s \cdot c\}=s \cdot c\}$, which again from~\eqref{eq: def ws} can be seen to have preimage $H_{-}$. Thus the preimages of $\Sp(\MM_s,i)$ for $i=1,2$ correspond precisely to the maximal-dimensional cones of $\Sigma(\MM_s)$, as desired. This concludes the proof. 
\end{proof}

\section{Example: a point-convex hull in $\MM_s \otimes \R$}\label{sec: pt conv hull example}

In this section, we take a moment to illustrate via one sample computation that convex geometry in the context of polyptych lattices can exhibit phenomena that are not intuitive from the classical perspective. First we recall some definitions from \cite{EscobarHaradaManon-PL}. Given a subset $S \subset \MM_\R$ we define the point-convex hull of $S$, denoted $\ptconv_\R(S)$, to be
\begin{equation}\label{eq: def pt convex hull}
\ptconv_\R(S) := \bigcap_{S \subset \HH_{p,\lambda}} \HH_{p,\lambda}
\end{equation}
where $p \in \Sp(\MM), \lambda \in \Z$, and the intersection ranges over those choices $p,\lambda$ with $S \subset \HH_{p,\lambda}$. Point-convexity is a natural polyptych-lattice analogue of the classical notion of convexity. 

For this discussion we fix $s=1$, so the mutation $\mu_{1,2}: M_1 \to M_2$ is given by $\mu_{1,2}(x,y) = (\min\{0,y\} - x, y)$. Now fix the (finite) set $S:= \{\pi_1^{-1}(0,0),\pi_1^{-1}(0,1),\pi_1^{-1}(0,-1)\} \subset \MM_s$. Then $\pi_2(S) = \{(-1,-1), (0,0),(0,1)\} \subset M_2$. We illustrate $S$ in each of the coordinate charts in Figure~\ref{fig_set_S} below.

\begin{figure}[h]
\centering
\begin{tikzpicture}
     \draw[gray,<->] (-1.5,0)--(1.5,0);
     \draw[gray,<->] (0,-1)--(0,1);
    \foreach \i in {-2,...,2}
      \foreach \j in {-2,...,2}{
        \filldraw[black] (.5*\i,.5*\j) circle(.5pt);
      };
    \filldraw[black, thick] (.5*0,.5*1) circle(1pt);
    \filldraw[black, thick] (.5*0,.5*0) circle(1pt);
    \filldraw[black, thick] (.5*0,.5*-1) circle(1pt);
\end{tikzpicture}
 \hspace{5cm}
 \begin{tikzpicture}
     \draw[gray,<->] (-1.5,0)--(1.5,0);
     \draw[gray,<->] (0,-1)--(0,1);
    \foreach \i in {-2,...,2}
      \foreach \j in {-2,...,2}{
        \filldraw[black] (.5*\i,.5*\j) circle(.5pt);
      };
    \filldraw[black, thick] (.5*0,.5*1) circle(1pt);
    \filldraw[black, thick] (.5*0,.5*0) circle(1pt);
    \filldraw[black, thick] (.5*-1,.5*-1) circle(1pt);
\end{tikzpicture}
 \caption{The two chart images of the set $S$. On the left is $\pi_1(S)$ and on the right is $\pi_2(S)$. In what follows, we compute the point-convex hull of $S$. }
    \label{fig_set_S}
\end{figure}
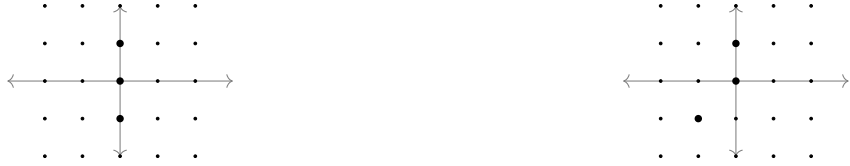

We now compute $\ptconv_\R(S)$ and we will see that it can happen that $\pi_i(\ptconv_\R(S))$ is not the same as the classical convex hull of $\pi_i(S)$ in $M_i \otimes \R$.

To compute $\ptconv_\R(S)$, by its definition~\eqref{eq: def pt convex hull}, we must first identify those PL half-spaces $\HH_{p,\lambda}$ with the property that $S \subset \HH_{p,\lambda}$, and then we must take the intersection of all of them. We have already seen above that any $p \in \Sp(\MM_s)$ is of the form 
$$
p_1(x,y) = p \circ \pi_1^{-1}(x,y)  = 
\begin{cases} 
cx-by, \quad \textup{ if } y \leq 0 \\
cx+ay, \quad \textup{ if } y \geq 0 
\end{cases} 
$$
for a triple $(a,b,c) \in \R^3$ satisfying $a+b = \min\{0,c\}$. To analyze the behavior of various pairs of $p$ and $\lambda$, we take cases. 

First suppose $c=0$. Then $a+b=0$ and $p_1(x,y)=-by$ for all $(x,y) \in M_1$. Now if $S \subset \HH_{p,\lambda}$ then $p_1(0,1)=-b \geq \lambda, p_1(0,0) = 0 \geq \lambda$, and $p_1(0,-1)=b \geq \lambda$. So $\lambda \leq 0$ and $b \in [\lambda, -\lambda]$. If $b=0$ then $p_1 \equiv 0$ and $\pi_1(\HH_{p,\lambda}) = M_1 \otimes \R$ so this case is trivial and we may omit it from consideration. For $b \neq 0$, it can be seen that for such a $p$ and $\lambda$, we can describe $\pi_1(\HH_{p,\lambda})$ as follows. If $b<0$ then 
$$
\pi_1(\HH_{p,\lambda}) = \{(x,y) \in M_1 \otimes \R \, \mid \, y \geq - \frac{\lambda}{b} \}
$$
and if $b>0$ then 
$$
\pi_1(\HH_{p,\lambda}) = \{(x,y) \in M_1 \otimes \R \, \mid \, y \leq \frac{\lambda}{b} \}. 
$$
Note that from the condition $b \in [\lambda,-\lambda]$ it follows that $\lvert \frac{\lambda}{b}\rvert \geq 1$. 
Second, suppose $c>0$. A similar computation shows that for $S \subset \HH_{p,\lambda}$ to hold we again must have $\lambda \leq 0$ and $b \in [\lambda,-\lambda]$ and thus, for such $p$ and $\lambda$, we have 
\begin{equation}
    \begin{split}
        \pi_1(\HH_{p,\lambda}) & = \{(x,y) \in M_1 \otimes \R \, \mid \, cx-by \geq \lambda\} \\
        & = \{(x,y) \in M_1 \otimes \R \, \mid \, x-b'y \geq \lambda' \} \textup{ where } \, \lambda' = \frac{\lambda}{c} \leq 0, b' = \frac{b}{c} \in [\lambda', -\lambda'].
    \end{split}
\end{equation}
We depict the different possibilities when $c>0$ in Figure~\ref{fig: c>0 cases}.

\begin{figure}[h]
\centering
\begin{tikzpicture} \begin{scope}[scale=1.6]
     \filldraw[fill=gray!50,draw=none] 
     (-.5*1.7,-.5*2)--(-.5*1.7,.5*2)--(.5*2,.5*2)--(.5*2,-.5*2)
    ;
    ;
    
     \draw[gray,<->] (-1.15,0)--(1,0);
     \draw[gray,<->] (0,-1)--(0,1);
     \draw[black] (-.5*1.7,.5*2)--(-.5*1.7,-.5*2);
    \foreach \i in {-2,...,2}
      \foreach \j in {-2,...,2}{
        \filldraw[black] (.5*\i,.5*\j) circle(.5pt);
      };
    \draw (-.5*.3,0) node [below] {\tiny $x\geq \lambda', \lambda'\leq 0$};
    \draw (0, -.5*2) node [below] {\footnotesize Case $b'=0$};
    \end{scope}
\end{tikzpicture}
 \hspace{2cm}
 \begin{tikzpicture}\begin{scope}[scale=1.6]
     \filldraw[fill=gray!50,draw=none] (-.5*3,-.5*.5)--(-.5*2.25,0)--(0,.5*1.5)--(.5*0.75,.5*2)--(.5*2,.5*2)--(.5*2,-.5*2)--(-.5*3,-.5*2)--(-.5*3,-.5*.5)
    ;

    \draw (-.5*3,-.5*.5)--(-.5*2.25,0)--(0,.5*1.5)--(.5*0.75,.5*2)
    ;
     \draw[gray,<->] (-1.5,0)--(1,0);
     \draw[gray,<->] (0,-1)--(0,1);

    \foreach \i in {-3,...,2}
      \foreach \j in {-2,...,2}{
        \filldraw[black] (.5*\i,.5*\j) circle(.5pt);
      };

     \filldraw[black, thick] (.5*0,.5*1.5) circle(1pt);
     \filldraw[black, thick] (-.5*2.25,.5*0) circle(1pt);
     \filldraw[black, thick]
     (.5*0,.5*1) circle(1pt); 

    \draw (-.5*2.25,0) node [above] {\tiny $(\lambda',0)$};
    \draw (0, .5*1.5) node [left] {\tiny $(0,-\frac{\lambda'}{b'})$};
    \draw (0, .5*1) node[below right=-2]
    {\tiny $(0,1)$};
    \draw (0,-.5*2) node[below]
    {\footnotesize Case $-\lambda' \geq b' > 0$};

 \end{scope}
\end{tikzpicture}
\hspace{2cm}
\begin{tikzpicture}\begin{scope}[scale=1.6]
     \filldraw[fill=gray!50,draw=none] (-.5*3,.5*.5)--(-.5*2.25,0)--(0,-.5*1.5)--(.5*0.75,-.5*2)--(.5*2,-.5*2)--(.5*2,.5*2)--(-.5*3,.5*2)--(-.5*3,.5*.5)
     
    ;
     \draw[gray,<->] (-1.5,0)--(1,0);
     \draw[gray,<->] (0,-1)--(0,1);
    \draw (-.5*3,.5*.5)--(-.5*2.25,0)--(0,-.5*1.5)--(.5*0.75,-.5*2)
    ;

    \foreach \i in {-2,...,2}
      \foreach \j in {-2,...,2}{
        \filldraw[black] (.5*\i,.5*\j) circle(.5pt);
      };

     \filldraw[black, thick] (.5*0,-.5*1.5) circle(1pt);
     \filldraw[black, thick] (-.5*2.25,.5*0) circle(1pt);
     \filldraw[black, thick]
     (.5*0,-.5*1) circle(1pt); 
     

    \draw (-.5*2.25,0) node [above] {\tiny $(\lambda',0)$};
    \draw (0, -.5*1.5) node [left] {\tiny $(0,-\frac{\lambda'}{b'})$};
    \draw (0, -.5*1) node[above right=-2]
    {\tiny $(0,-1)$};
    \draw (0,-.5*2) node[below]
    {\footnotesize Case $\lambda' \leq b' < 0$};

 \end{scope}
\end{tikzpicture}
 \caption{We illustrate $\pi_1(\HH_{p,\lambda})$ for the cases when $c>0$, divided into cases according to whether $b'$ is $0$, $>0$ or $<0$.  }
    \label{fig: c>0 cases}
\end{figure}
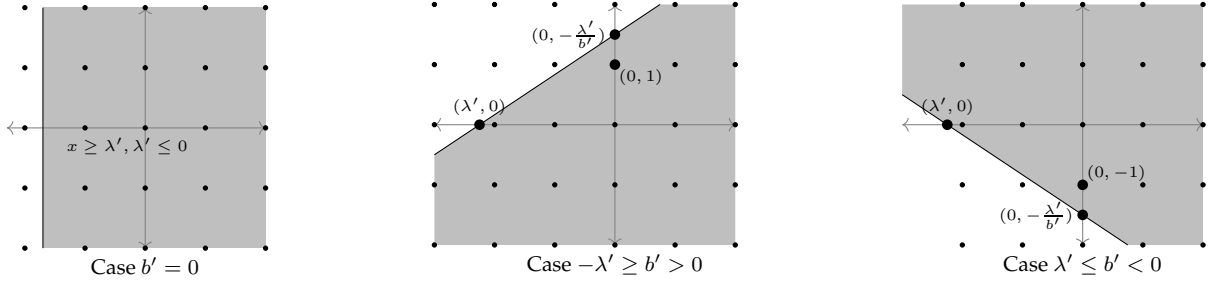

We have noted in Figure~\ref{fig: c>0 cases} that $\lvert \frac{\lambda'}{b'} \rvert \geq 1$ due to the relation between $\lambda'$ and $b'$. Moreover, it is not difficult to see that there are choices of $a,b,c,\lambda$ such that we can obtain any value $\lambda' \leq 0, -\lambda'/b' \geq 1$.  

Next we consider the case $c<0$. Then 
$$
p_1(x,y) = p \circ \pi_1^{-1}(x,y)  = 
\begin{cases} 
cx-by, \quad \textup{ if } y \leq 0 \\
cx-by+cy, \quad \textup{ if } y \geq 0. 
\end{cases} 
$$
  For $\lambda \in \Z$ and $p_1$ as above, a computation similar to those above shows that the condition $S \subset \HH_{p,\lambda}$ is equivalent to the conditions $\lambda \leq 0, \lambda \leq b, \lambda \leq c-b$. Since the computations are similar, we omit details and record the results in Figure~\ref{fig: c<0 cases part 1} (the cases $b=0$ and $b>0,c<0$) and Figure~\ref{fig: c<0 cases part 2} (the cases $b=c<0$, $c<b<0$, and $b<c<0$).

  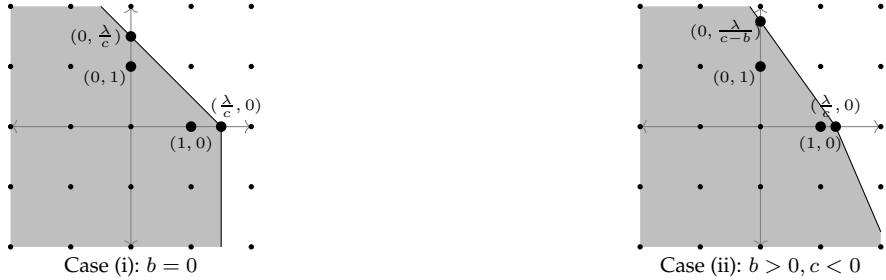
\begin{figure}[h]
\centering
\begin{minipage}{.5\textwidth}
  \centering
\begin{tikzpicture}\begin{scope}[scale=1.6]
     \filldraw[fill=gray!50,draw=none] 
     (-.5*2,.5*2)--(-.5*.5,.5*2)--(.5*1.5,-.5*0)--(.5*1.5,-.5*2)--(-.5*2,-.5*2)--(-.5*2,.5*2)
    ;
     \draw[gray,<->] (-1,0)--(1,0);
     \draw[gray,<->] (0,-1)--(0,1);
     \draw (-.5*.5,.5*2)--(.5*1.5,0);
     \draw (.5*1.5,0)--(.5*1.5,-.5*2);
    \foreach \i in {-2,...,2}
      \foreach \j in {-2,...,2}{
        \filldraw[black] (.5*\i,.5*\j) circle(.5pt);
      };
     \filldraw[black, thick] (.5*0,.5*1) circle(1pt);
     \filldraw[black, thick] (.5*1,.5*0) circle(1pt);
     \filldraw[black, thick] (.5*0,.5*1.5) circle(1pt);
     \filldraw[black, thick] (.5*1.5,.5*0) circle(1pt);


 \draw (0,.5*1) node [below left=-2] {\tiny $(0,1)$};
    \draw (.5*1, 0) node [below] {\tiny $(1,0)$};
     \draw (0,.5*1.5) node [left] {\tiny $(0,\frac{\lambda}{c})$};
    \draw (.5*1.75, 0) node [above] {\tiny $(\frac{\lambda}{c},0)$};
    \draw (0,-.5*2) node [below] 
    {\footnotesize Case (i): $b=0$};
\end{scope}\end{tikzpicture}

\end{minipage}%
\begin{minipage}{.5\textwidth}
\centering

 \begin{tikzpicture}\begin{scope}[scale=1.6]
     \filldraw[fill=gray!50,draw=none] 
     (-.5*2,.5*2)--(-.5*0.17857,.5*2)--(0,.5*1.75)--(.5*1.25,0)--(.5*2,-.5*1.75)--(.5*2,-.5*2)--(-.5*2,-.5*2)--(-.5*2,.5*2)
    ;
     \draw[gray,<->] (-1,0)--(1,0);
     \draw[gray,<->] (0,-1)--(0,1);
     \draw(-.5*0.17857,.5*2)--(0,.5*1.75)--(.5*1.25,0)--(.5*2,-.5*1.75);
    \foreach \i in {-2,...,2}
      \foreach \j in {-2,...,2}{
        \filldraw[black] (.5*\i,.5*\j) circle(.5pt);
      };
     \filldraw[black, thick] (.5*0,.5*1) circle(1pt);
     \filldraw[black, thick] (.5*1,.5*0) circle(1pt);
     \filldraw[black, thick] (.5*0,.5*1.75) circle(1pt);
     \filldraw[black, thick] (.5*1.25,.5*0) circle(1pt);


 \draw (0,.5*1) node [below left=-2] {\tiny $(0,1)$};
    \draw (.5*1, 0) node [below] {\tiny $(1,0)$};
     \draw (0,.5*1.75) node [below left=-4] {\tiny $(0,\frac{\lambda}{c-b})$};
    \draw (.5*1.25, 0) node [above] {\tiny $(\frac{\lambda}{c},0)$};
    \draw (0,-.5*2) node [below] 
    {\footnotesize Case (ii): $b>0, c<0$};
\end{scope}\end{tikzpicture}
\end{minipage}

 \caption{Case (i): on the left we show the $b=0$ case. Case (ii): on the right we illustrate the case $b>0,c<0$, in which we have $\lambda \leq c-b <c<0<b$. For $x\leq \frac{\lambda}{c}$, the boundary is defined by the equation $x=(\frac{b}{c}-1)y+\frac{\lambda}{c}$. For $x \geq \frac{\lambda}{c}$, the boundary is given by $x=\frac{b}{c}y+\frac{\lambda}{c}$. The absolute value of $\frac{\lambda}{c-b}$ is larger than that of $\frac{\lambda}{c}$ since $c-b<c$. }
    \label{fig: c<0 cases part 1}
\end{figure}

 \begin{figure}[h]
\centering
\begin{minipage}{.3\textwidth}
  \centering

\begin{tikzpicture}\begin{scope}[scale=1.6]
     
     \filldraw[fill=gray!50,draw=none]
     (.5*1.5,.5*2)--(.5*1.5,0)--(0,-.5*1.5)--(-.5*.5,-.5*2)--(-.5*2,-.5*2)--(-.5*2,.5*2)--(.5*1.5,.5*2)
     ;
     \draw[gray,<->] (-1,0)--(1,0);
     \draw[gray,<->] (0,-1)--(0,1);
     \draw (.5*1.5,0)--(.5*1.5,.5*2);
     \draw (.5*1.5,0)--(0,-.5*1.5)--(-.5*.5,-.5*2);
    
    \foreach \i in {-2,...,2}
      \foreach \j in {-2,...,2}{
        \filldraw[black] (.5*\i,.5*\j) circle(.5pt);
      };
     \filldraw[black, thick] (.5*0,-.5*1) circle(1pt);
     \filldraw[black, thick] (.5*1,.5*0) circle(1pt);
     \filldraw[black, thick] (.5*0,-.5*1.5) circle(1pt);
     \filldraw[black, thick] (.5*1.5,.5*0) circle(1pt);


 \draw (0,-.5*1) node [above left=-2] {\tiny $(0,-1)$};
    \draw (.5*1, 0) node [below] {\tiny $(1,0)$};
     \draw (0,-.5*1.5) node [left] {\tiny $(0,-\frac{\lambda}{c})$};
    \draw (.5*1.5, 0) node [above] {\tiny $(\frac{\lambda}{c},0)$};
    \draw (0,-.5*2) node [below] 
    {\footnotesize Case (iii): $b=c<0$};
\end{scope}
\end{tikzpicture}

\end{minipage}%
\begin{minipage}{.3\textwidth}
\centering

\begin{tikzpicture}\begin{scope}[scale=1.6]
      
     
      \filldraw[fill=gray!50,draw=none]
      (-.5*0.17857,.5*2)--(0,.5*1.75)--(.5*1.25,0)--(-.5*.6,-.5*2)--(-.5*2,-.5*2)--(-.5*2,.5*2)--(-.5*0.17857,.5*2)
      ;
     \draw[gray,<->] (-1,0)--(1,0);
     \draw[gray,<->] (0,-1)--(0,1);
      \draw (-.5*0.17857,.5*2)--(0,.5*1.75)--(.5*1.25,0);
      \draw
      (.5*1.25,0)--(-.5*.6,-.5*2);

    \foreach \i in {-2,...,2}
      \foreach \j in {-2,...,2}{
        \filldraw[black] (.5*\i,.5*\j) circle(.5pt);
      };

  \filldraw[black, thick] (.5*0,.5*1) circle(1pt);
     \filldraw[black, thick] (.5*.5,.5*0) circle(1pt);
     \filldraw[black, thick] (.5*0,.5*1.75) circle(1pt);
     \filldraw[black, thick] (.5*1.25,.5*0) circle(1pt);
      
      \filldraw[black, thick] (.5*0,-.5*1) circle(1pt);
      \filldraw[black, thick] (.5*0,-.5*1.5) circle(1pt);


 \draw (0,.5*1) node [below left=-2] {\tiny $(0,1)$};
    \draw (.5*.5, 0) node [below] {\tiny $(0.5,0)$};
     \draw (0,.5*1.75) node [below left=-4] {\tiny $(0,\frac{\lambda}{c-b})$};
    \draw (.5*1.25, 0) node [above] {\tiny $(\frac{\lambda}{c},0)$};
    \draw (0,-.5*2) node [below] 
    {\footnotesize Case (iv): $c<b<0$}
    ;

 \draw (0,-.5*1) node [above left=-2] {\tiny $(0,-1)$};
     \draw (0,-.5*1.5) node [left] {\tiny $(0,-\frac{\lambda}{b})$};
\end{scope}
\end{tikzpicture}

\end{minipage}%
\begin{minipage}{.3\textwidth}
\centering

\begin{tikzpicture}\begin{scope}[scale=1.6]
      
     
      \filldraw[fill=gray!50,draw=none]
      (.5*2,.5*2)--(.5*2,.5*1.8)--(.5*1.25,0)--(-.5*.6,-.5*2)--(-.5*2,-.5*2)--(-.5*2,.5*2)--(-.5*2,.5*2)--(.5*2,.5*2)
      ;
     \draw[gray,<->] (-1,0)--(1,0);
     \draw[gray,<->] (0,-1)--(0,1);
      \draw
      (.5*1.25,0)--(.5*2,.5*1.8);
      \draw
      (.5*1.25,0)--(-.5*.6,-.5*2);

    \foreach \i in {-2,...,2}
      \foreach \j in {-2,...,2}{
        \filldraw[black] (.5*\i,.5*\j) circle(.5pt);
      };

     \filldraw[black, thick] (.5*1,.5*0) circle(1pt);

     \filldraw[black, thick] (.5*1.25,.5*0) circle(1pt);
      
      \filldraw[black, thick] (.5*0,-.5*1) circle(1pt);
      \filldraw[black, thick] (.5*0,-.5*1.5) circle(1pt);


    \draw (.5*1, 0) node [below] {\tiny $(1,0)$};
  
    \draw (.5*1.25, 0) node [above] {\tiny $(\frac{\lambda}{c},0)$};
    \draw (0,-.5*2) node [below] 
    {\footnotesize Case (v): $b<c<0$}
    ;

 \draw (0,-.5*1) node [above left=-2] {\tiny $(0,-1)$};
     \draw (0,-.5*1.5) node [left] {\tiny $(0,-\frac{\lambda}{b})$};
\end{scope}
\end{tikzpicture}

\end{minipage}
 \caption{Case (iii): $b=c<0$. Here we know $\lambda/c \geq 1$. Case (iv): $c<b<0$. In this case we know $\lambda/c\geq 1/2$. Case (v): $b<c<0$. In the region $x \leq \lambda/c$, the boundary is given by a linear function of slope $\frac{c}{b}$, while on the region $x \geq \lambda/c$, the boundary is given by a linear function of slope $\frac{c}{b-c} > \frac{c}{b}$. }
    \label{fig: c<0 cases part 2}
\end{figure}
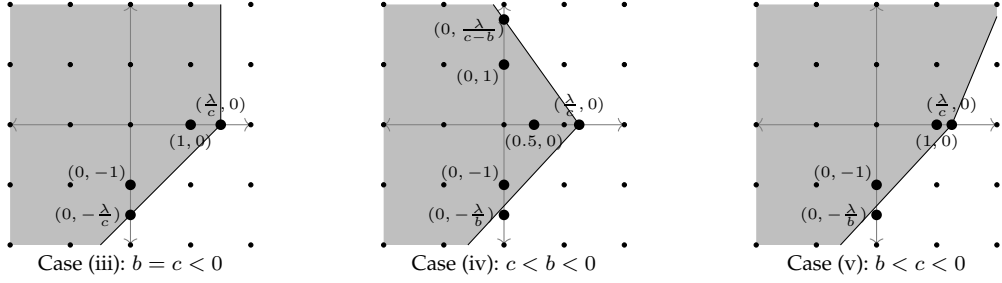

For case (iv) in Figure~\ref{fig: c<0 cases part 2} we remark that the lower bound $\lambda/c \geq 1/2$ is found by observing that since $\lambda \leq b$ and $\lambda \leq c-b$, we know $\frac{\lambda+b}{c} \geq 1$ and hence $\lambda/c \geq 1 - b/c$. On the other hand we also know $\lambda \leq b$ so $\lambda/c \geq b/c$, so $\lambda/c \geq \max\{1-b/c, b/c\}$. By the hypotheses we know $b/c\geq 0$ so $\lambda/c$ must be larger than the min of the function $\max\{1-x,x\}$ on $[0,\infty)$ which is $1/2$. It is possible to achieve this min by selecting $2b=c$ and $\lambda=c/2$. 

The above computations determine the set of $p,\lambda$ with $S \subset \HH_{p,\lambda}$ and it now follows that the image under $\pi_1$ of the intersection of all such $\HH_{p,\lambda}$ is as depicted on the left in Figure~\ref{fig: pt conv hull S}. By mutating, it is also immediate that $\pi_1(\ptconv_\R(S))$ is given by the figure on the right. 

\begin{figure}[h]
\centering

\begin{tikzpicture}\begin{scope}[scale=1.3]
     \filldraw[fill=gray!50,draw=none] 
     (-.5*0,.5*1)--(.5*0.5,0)--(.5*0,-.5*1)--(.5*0,-.5*0)
    ;
     \draw[gray,<->] (-.7,0)--(.7,0);
     \draw[gray,<->] (0,-.7)--(0,.7);
    \draw (-.5*0,.5*1)--(.5*0.5,0)--(.5*0,-.5*1)--(.5*0,-.5*0)--(-.5*0,.5*1)
    ;
    \foreach \i in {-1,...,1}
      \foreach \j in {-1,...,1}{
        \filldraw[black] (.5*\i,.5*\j) circle(.5pt);
      };
\end{scope}\end{tikzpicture}
\hspace{2cm}
\begin{tikzpicture}\begin{scope}[scale=1.3]
     \filldraw[fill=gray!50,draw=none] 
     (-.5*0,.5*1)--(.5*0,0)--(-.5*1,-.5*1)--(.5*0,.5*1)
    ;
    \draw (-.5*0,.5*1)--(.5*0,0)--(-.5*1,-.5*1)--(.5*0,.5*1)
    ;
     \draw[gray,<->] (-.7,0)--(.7,0);
     \draw[gray,<->] (0,-.7)--(0,.7);
    \foreach \i in {-1,...,1}
      \foreach \j in {-1,...,1}{
        \filldraw[black] (.5*\i,.5*\j) circle(.5pt);
      };
\end{scope}\end{tikzpicture}

 \caption{The image of $\ptconv_\R(S)$ in the two charts, with $\pi_1(S)$ on the left and $\pi_2(S)$ on the right.}
    \label{fig: pt conv hull S}
\end{figure}
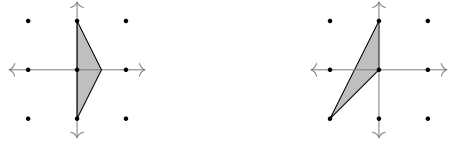

Thus we see that the point-convex hull of the set $S$ with $\pi_1(S)$ equal to $3$ collinear points in $M_1\otimes \R$ is a $2$-dimensional triangle, and in particular is not equal to the classical convex hull of $\pi_1(S)$, which is the $1$-dimensional line segment connecting the points in $\pi_1(S)$. However, the image of $S$ under $\pi_2$ is not collinear, and $\pi_2(\ptconv_\R(S))$ is in fact equal to the classical convex hull of $\pi_2(S)$.

\section{Examples: chart-Gorenstein-Fano polytopes in $\MM_s$}\label{sec: GF polytope}

 We now build, by way of example, several PL polytopes in $(\MM_s)_{\R}$ which are chart-Gorenstein-Fano in the sense of Definition~\ref{definition: PL Gorenstein Fano}. Since $\MM_s$ is rank $2$, by \cite[Lemma 5.21]{EscobarHaradaManon-PL} we expect the coordinate chart images of such a PL polytope to be classical $2$-dimensional Gorenstein-Fano polytopes. As mentioned in the Introduction, we expect these examples to be related to past work of e.g. Petracci, Ilten, and Christophersen on deformations of toric varieties and complexity-$1$ $T$-varieties.

We begin with an example for $s=1$, where we give full details. 
We have seen from Proposition~\ref{proposition: points of Sp MMs} that a point in $\Sp(\MM_s)$ is specified by $3$ parameters, namely $(a,b,c) \in \mathcal{T}_s$ such that $a+b = s \cdot \min\{0,c\}$. Under this identification, our $3$ points $\mathsf{p},\mathsf{q},\mathsf{r}$ are specified by the choices 
$$
\mathsf{p} = (-2,2,1), \quad \mathsf{q} = (0,-1,-1), \quad \mathsf{r} = (1,-1,1)
$$
where the triples are interpreted as elements of $\mathcal{T}_s$. 
More concretely, this means that, for example, the point $\mathsf{p}$ expressed in $M_1$ coordinates $(x,y)$ and $M_2$ coordinates $(x’,y’)$ respectively, is 
$$
\mathsf{p}(x,y) = x-2y, \quad \mathsf{p}(x’,y’) = \begin{cases} -x'-2y’ \quad \textup{ if } \, y \geq 0 \\ 
-x’ - y’ \quad \textup{ if } \, y < 0.
\end{cases}
$$
Note that $\mathsf{p}$ is linear on $M_1$.  The associated PL half-space $\mathcal{H}_{\mathsf{p},-1}$ is depicted in both charts in Figure \ref{fig_H_p}. 

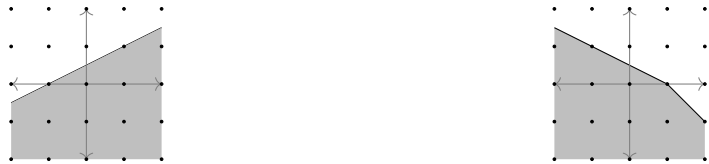
\begin{figure}[h]
    \centering
    \begin{tikzpicture}
    \draw (-.5*2,-.5*.5)--(.5*2,.5*1.5);
    \filldraw[fill=gray!50,draw=none] (-.5*2,-.5*.5)--(.5*2,.5*1.5)--(.5*2,-.5*2)--(-.5*2,-.5*2)--(-.5*2,-.5*.5);
    \draw[gray,<->] (-1,0)--(1,0);
    \draw[gray,<->] (0,-1)--(0,1);
    \foreach \i in {-2,...,2}
      \foreach \j in {-2,...,2}{
        \filldraw[black] (.5*\i,.5*\j) circle(.5pt);
      };
    \end{tikzpicture}
    \hspace{5cm}
    \begin{tikzpicture}
    \filldraw[fill=gray!50,draw=none] 
     (-.5*2,.5*1.5)--(.5*1,0)--(.5*2,-.5*1)--(.5*2,-.5*2)--(-.5*2,-.5*2)
    ;
    \draw (-.5*2,.5*1.5)--(.5*1,0)--(.5*2,-.5*1)
    ;
    \draw[gray,<->] (-1,0)--(1,0);
    \draw[gray,<->] (0,-1)--(0,1);
    \foreach \i in {-2,...,2}
      \foreach \j in {-2,...,2}{
        \filldraw[black] (.5*\i,.5*\j) circle(.5pt);
      };
    \end{tikzpicture}
    \caption{The two chart images of the PL half-space $\mathcal{H}_{\mathsf{p},-1}$. On the left is $\pi_1(\mathcal{H}_{\mathsf{p},-1})$ and on the right is $\pi_2(\mathcal{H}_{\mathsf{p},-1})$. }
    \label{fig_H_p}
\end{figure}

Similarly, the point $\mathsf{q}$ expressed in $M_1$ and $M_2$ coordinates is 
$$
\mathsf{q}(x,y) = \begin{cases} -x \quad \quad \textup{ if } \,  y \geq 0 \\ -x+y \quad \textup{ if } y \leq 0 \end{cases}
\quad \quad  \quad\textup{ and } \quad \quad \quad   \mathsf{q}(x’,y’) = x’  
$$
So $\mathsf{q}$ is linear on $M_2$ and not linear on $M_1$.   The point $\mathsf{r}$ is given by 
$$
\mathsf{r}(x,y) = x+y, \quad \mathsf{r}(x;,y’) = \begin{cases} -x’+y’, \quad \textup{ if } \, y’ \geq 0 \\ 
-x’ + 2y’, \quad \textup{ if } y’ \leq 0 \end{cases} 
$$
so $\mathsf{r}$ is linear on $M_1$. 
The associated PL half-spaces $\mathcal{H}_{\mathsf{q},-1}$ and $\mathcal{H}_{\mathsf{r},-1}$ are depicted, in both charts, in Figure \ref{fig_H_q} and Figure \ref{fig_H_r} respectively. 

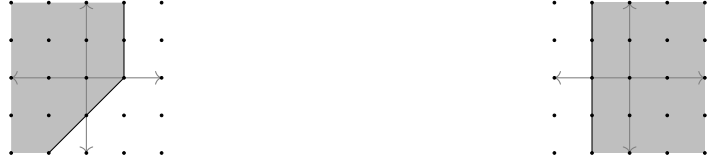
\begin{figure}[h]
    \centering
    \begin{tikzpicture}
    \filldraw[fill=gray!50,draw=none] (-.5*2,-.5*2)--(-.5*1,-.5*2)--(.5*0,-.5*1)--(.5*1,.5*0)--(.5*1,.5*1)--(.5*1,.5*2)--(-.5*2,.5*2)--(-.5*2,-.5*2);
    \draw  (-.5*1,-.5*2)--(.5*0,-.5*1)--(.5*1,.5*0)--(.5*1,.5*1)--(.5*1,.5*2);
    \draw[gray,<->] (-1,0)--(1,0);
    \draw[gray,<->] (0,-1)--(0,1);
    \foreach \i in {-2,...,2}
      \foreach \j in {-2,...,2}{
        \filldraw[black] (.5*\i,.5*\j) circle(.5pt);
      };
    \end{tikzpicture}
    \hspace{5cm}
    \begin{tikzpicture}
    \filldraw[fill=gray!50,draw=none] (-.5*1,-.5*2)--(-.5*1,.5*2)--(.5*2,.5*2)--(.5*2,-.5*2)--(-.5*1,-.5*2)
    ;
    \draw (-.5*1,-.5*2)--(-.5*1,.5*2)
    ;
    \draw[gray,<->] (-1,0)--(1,0);
    \draw[gray,<->] (0,-1)--(0,1);
    \foreach \i in {-2,...,2}
      \foreach \j in {-2,...,2}{
        \filldraw[black] (.5*\i,.5*\j) circle(.5pt);
      };
    \end{tikzpicture}
    \caption{The two chart images of the PL half-space $\mathcal{H}_{\mathsf{q},-1}$. On the left is $\pi_1(\mathcal{H}_{\mathsf{q},-1})$ and on the right is $\pi_2(\mathcal{H}_{\mathsf{q},-1})$.} 
    \label{fig_H_q}
\end{figure}

\begin{figure}[h]
    \centering
    \begin{tikzpicture}
     \filldraw[fill=gray!50,draw=none] (-.5*2,.5*1)--(.5*1,-.5*2)--(.5*2,-.5*2)--(.5*2,.5*2)--(-.5*2,.5*2)--(-.5*2,.5*1);
     \draw (-.5*2,.5*1)--(.5*1,-.5*2);
    \draw[gray,<->] (-1,0)--(1,0);
    \draw[gray,<->] (0,-1)--(0,1);
    \foreach \i in {-2,...,2}
      \foreach \j in {-2,...,2}{
        \filldraw[black] (.5*\i,.5*\j) circle(.5pt);
      };
    \end{tikzpicture}
    \hspace{5cm}
    \begin{tikzpicture}
    \filldraw[fill=gray!50,draw=none] (-.5*2,-.5*1.5)--(-.5*2,.5*2)--(.5*2,.5*2)--(.5*2,.5*1)--(.5*1,-.5*0)--(-.5*2,-.5*1.5);
    \draw (.5*2,.5*1)--(.5*1,-.5*0)--(-.5*2,-.5*1.5);
    \draw[gray,<->] (-1,0)--(1,0);
    \draw[gray,<->] (0,-1)--(0,1);
    \foreach \i in {-2,...,2}
      \foreach \j in {-2,...,2}{
        \filldraw[black] (.5*\i,.5*\j) circle(.5pt);
      };
    \end{tikzpicture}
    \caption{The two chart images of the PL half-space $\mathcal{H}_{\mathsf{r},-1}$. On the left is $\pi_1(\mathcal{H}_{\mathsf{r},-1})$ and on the right is $\pi_2(\mathcal{H}_{\mathsf{r},-1})$. } 
    \label{fig_H_r}
\end{figure}
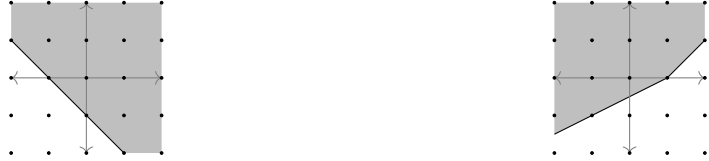

The intersection of these $3$ PL half-spaces is then a PL polytope $\PP$.  We depict both coordinate chart images of $\PP$ in Figure \ref{fig_charts-GF_polytopes_Ms}. We note that the two coordinate chart images $\pi_1(\PP)$ and $\pi_2(\PP)$ are both Gorenstein-Fano polytopes in the classical sense. Indeed, in the list of all $2$-dimensional Gorenstein-Fano polytopes (up to lattice isomorphism) given in \cite[p. 382]{CoxLittleSchenck}, the two polytopes are of type 4b and 4c respectively. These are related by the piecewise-linear mutation $\mu_{1,2}: M_1 \to M_2$. 

\begin{figure}[h]
    \centering
    \begin{tikzpicture}
    \filldraw[fill=gray!50] (.5*0,-.5*1)--(.5*1,.5*0)--(.5*1,.5*1)--(-.5*1,.5*0)--(.5*0,-.5*1);
    \draw[gray,<->] (-1,0)--(1,0);
    \draw[gray,<->] (0,-1)--(0,1);
    \foreach \i in {-2,...,2}
      \foreach \j in {-2,...,2}{
        \filldraw[black] (.5*\i,.5*\j) circle(.5pt);
      };
    \end{tikzpicture}
    \hspace{5cm}
    \begin{tikzpicture}
    \filldraw[fill=gray!50] (-.5*1,-.5*1)--(-.5*1,.5*1)--(.5*1,.5*0)--(-.5*1,-.5*1)
    ;
    \draw[gray,<->] (-1,0)--(1,0);
    \draw[gray,<->] (0,-1)--(0,1);
    \foreach \i in {-2,...,2}
      \foreach \j in {-2,...,2}{
        \filldraw[black] (.5*\i,.5*\j) circle(.5pt);
      };
    \end{tikzpicture}
    \caption{The two chart images of the PL polytope $\PP = \mathcal{H}_{\mathsf{p},-1} \cap \mathcal{H}_{\mathsf{q},-1} \cap \mathcal{H}_{\mathsf{r},-1}$. On the left is $\pi_1(\PP)$ and on the right is $\pi_2(\PP)$. Both chart images are Gorenstein-Fano polytopes in the classical sense.}
    \label{fig_charts-GF_polytopes_Ms}
\end{figure}
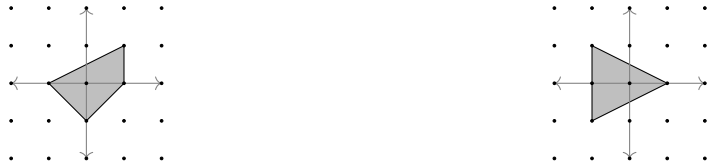

In the classical setting, the dual of a Gorenstein-Fano polytope is again a lattice polytope. We now explicitly compute $\PP^\vee$ for the example above to see that this is also the case in this specific example. Note that since $\MM_s$ is self-dual, both $\PP$ and $\PP^\vee$ are PL polytopes in $(\MM_s)_{\R}$. 
A computationally effective method of computing $\PP^\vee$ is given in \cite[Lemma 5.16]{EscobarHaradaManon-PL} which states 
\begin{equation}\label{eq: compute P dual}
\PP^\vee = \bigcap_{m \in \mathrm{V}(\PP)} \mathcal{H}_{\mathbf{v}(m), -1}
\end{equation}
where $\mathrm{V}(\PP)$ denotes the set of vertices of $\PP$ as in~\eqref{eq: def vertices of P} and $\mathbf{v}$ is the strict dual pairing. 
To take advantage of this characterization, we need the vertices of $\PP$ in our example. It is straightforward to compute 
$$
\mathrm{V}(\PP) = \{(1,1),(-1,1)), ((1,0),(-1,0)), ((0,-1),(-1,-1)), ((-1,0),(1,0)) \}.
$$
In order to interpret the vertices as points in $\Sp(\MM_s)$, we must take their images under the strict dual pairing $\mathbf{w}_s$ of $\MM_s$, as given in~\eqref{eq: def ws}, which takes values in $\mathcal{T}_s \cong \Sp(\MM_s)$.  We can then use these images to define the PL half-spaces in the RHS of~\eqref{eq: compute P dual}. The relevant data is summarized in the table below. 

\begin{tabular}{|c|c|c|c|c|c|}
\hline 
$m \in \mathrm{V}(\PP)$ & $p(\mathfrak{e}_2)$ & $p(\mathfrak{e}’_2)$ & $p(\mathfrak{e}_1)$ & on $M_1$ & on $M_2$   \\  \hline
((1,1),(-1,1)) & 1 & -1 & 1 & $x+y$  &$ \begin{cases} -x’ + y’ \, \textup{ if }  y’ \geq 0 \\ -x’ + 2y’ \, if y’ \leq 0 \end{cases}$  \\  \hline 
((1,0),(-1,0))  & 1 &-1  & 0 & $y$  & $y’$  \\  \hline
((0,-1),(-1,-1))  & 0  & -1 & -1 & $\begin{cases} -x+y \,  \textup{ if } \, y \leq 0 \\ -x \,  \textup{ if } \, y \geq 0 \end{cases} $  & $x’$  \\ \hline
((-1,0),(1,0))  & -1  & 1  & 0 & $-y$ & $-y’$  \\ 
\hline
\end{tabular}

The intersection of the $4$ PL half-spaces $\mathcal{H}_{\mathbf{w}_s(m),-1}$ for the $4$ vertices in $\mathrm{V}(\PP)$ is depicted in both coordinate charts in Figure \ref{fig_dual_polytopes}. 
It is not difficult to see that the two chart images are equivalent up to a transformation in $SL(2,\Z)$, hence are lattice-equivalent. In the list of 2-dimensional Gorenstein-Fano polytopes given in \cite{CoxLittleSchenck}, these two polytopes correspond to type 7b.

\begin{figure}[h]
    \centering
    \begin{tikzpicture}
    \filldraw[fill=gray!50] (-.5*2,.5*1)--(.5*1,.5*1)--(.5*1,.5*0)--(.5*0,-.5*1)--(-.5*2,.5*1);
    \draw[gray,<->] (-1.5,0)--(1.5,0);
    \draw[gray,<->] (0,-1)--(0,1);
    \foreach \i in {-2,...,2}
      \foreach \j in {-2,...,2}{
        \filldraw[black] (.5*\i,.5*\j) circle(.5pt);
      };
    \end{tikzpicture}
    \hspace{5cm}
    \begin{tikzpicture}
    \filldraw[fill=gray!50] (-.5*1,-.5*1)--(-.5*1,.5*1)--(.5*2,.5*1)--(.5*1,0)--(-.5*1,-.5*1)
    ;
    \draw[gray,<->] (-1,0)--(1,0);
    \draw[gray,<->] (0,-1)--(0,1);
    \foreach \i in {-2,...,2}
      \foreach \j in {-2,...,2}{
        \filldraw[black] (.5*\i,.5*\j) circle(.5pt);
      };
    \end{tikzpicture}
    \caption{The two chart images of the PL dual polytope $\PP^\vee$. On the left is $\pi_1(\PP^\vee)$ and on the right is $\pi_2(\PP^\vee)$.}
    \label{fig_dual_polytopes}
\end{figure}
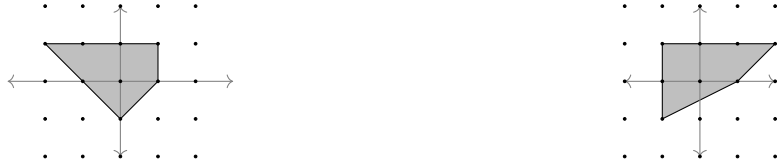

We now proceed to record several more examples of chart-Gorenstein-Fano polytopes in $\MM_s$. Since the computations are similar to those given above, we do not give details. We should emphasize here that we do not claim any general classification results or existence results; we leave this open for future exploration.

In the examples below, we record the representatives in $\mathcal{T}_s$ of the points with respect to which we define the half-spaces $\HH_{p,-1}$ whose intersection is the PL polytope $\PP$. We also illustrate the coordinate chart images of $\PP$ in $M_1$ and $M_2$, as well as the coordinate chart images of the dual polytope $\PP^\vee$. 

\begin{Example} 
We begin with another example in the $s=1$ case. We represent the five points in $\Sp(\MM_1)$ as elements in $\mathcal{T}_1$ using the identification given in Section~\ref{sec: MM_s}. With this understood, the points are 
$$
\mathsf{p} = (-1,0,-1), \quad \mathsf{q} = (1,-1,0), \quad \mathsf{r} = (-1,1,0), \quad \mathsf{s} = (0,0,1), \quad  \mathsf{t}= (1,-1,1).
$$
So the polytope $\PP$ is the intersection 
$$
\PP = \HH_{\mathsf{p},-1} \cap \HH_{\mathsf{q},-1} \cap \HH_{\mathsf{r},-1} \cap \HH_{\mathsf{s},-1} \cap \HH_{\mathsf{t},-1}
$$
where we have chosen all parameters $a$ in the definition of the half-spaces to be equal to $-1$, since we wish to describe a chart-Gorenstein-Fano PL polytope. It is then straightforward to compute that the chart images of $\PP$ are as given in Figure~\ref{fig_s1_polytope}. 

\begin{figure}[h]
    \centering
    \begin{tikzpicture}
    \filldraw[fill=gray!50] (-.5*1,.5*1)--(.5*0,.5*1)--(.5*1,.5*0)--(.5*1,.5*-1)--(.5*0,-.5*1)--(-.5*1,.5*0)--(-.5*1,.5*1);
    \draw[gray,<->] (-1.5,0)--(1.5,0);
    \draw[gray,<->] (0,-1)--(0,1);
    \foreach \i in {-2,...,2}
      \foreach \j in {-2,...,2}{
        \filldraw[black] (.5*\i,.5*\j) circle(.5pt);
      };
    \end{tikzpicture}
    \hspace{5cm}
    \begin{tikzpicture}
    \filldraw[fill=gray!50] (.5*0,.5*1)--(.5*1,.5*1)--(.5*1,.5*0)--(-.5*1,-.5*1)--(-.5*1,-.5*1)--(-.5*2,-.5*1)--(.5*0,.5*1)
    ;
    \draw[gray,<->] (-1,0)--(1,0);
    \draw[gray,<->] (0,-1)--(0,1);
    \foreach \i in {-2,...,2}
      \foreach \j in {-2,...,2}{
        \filldraw[black] (.5*\i,.5*\j) circle(.5pt);
      };
    \end{tikzpicture}
    \caption{We illustrate the chart images of the chart-Gorenstein-Fano polytope $\PP$ defined by the five given points. The image $\pi_1(\PP)$ in $M_1 \otimes \R$ is illustrated on the left, $\pi_2(\PP) \subset M_2 \otimes \R$ is on the right.}
    \label{fig_s1_polytope}
\end{figure}
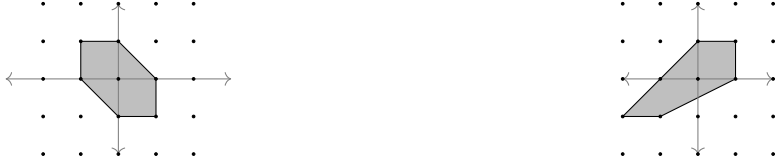

The vertices of this PL polytope can be seen to have chart image $\{(1,0),(0,1),(-1,1),(-1,0),(0,-1),(1,-1)\}$. Using the formula for $\w_s$ given in~\eqref{eq: def ws} we may then compute its associated points and the corresponding dual polytope $\PP^\vee$. 
We illustrate the resulting dual PL polytope $\PP^\vee$ in Figure~\ref{fig_s1_polytope_dual}.

\begin{figure}[h]
    \centering
    \begin{tikzpicture}
    \filldraw[fill=gray!50] (-.5*0,.5*1)--(.5*1,.5*1)--(.5*1,.5*0)--(-.5*1,.5*-1)--(-.5*1,-.5*0)--(-.5*0,.5*1);
    \draw[gray,<->] (-1.5,0)--(1.5,0);
    \draw[gray,<->] (0,-1)--(0,1);
    \foreach \i in {-2,...,2}
      \foreach \j in {-2,...,2}{
        \filldraw[black] (.5*\i,.5*\j) circle(.5pt);
      };
    \end{tikzpicture}
    \hspace{5cm}
    \begin{tikzpicture}
    \filldraw[fill=gray!50] (.5*0,.5*1)--(.5*1,.5*0)--(.5*0,.5*-1)--(-.5*0,-.5*1)--(-.5*1,-.5*0)--(-.5*1,.5*1)--(.5*0,.5*1)
    ;
    \draw[gray,<->] (-1,0)--(1,0);
    \draw[gray,<->] (0,-1)--(0,1);
    \foreach \i in {-2,...,2}
      \foreach \j in {-2,...,2}{
        \filldraw[black] (.5*\i,.5*\j) circle(.5pt);
      };
    \end{tikzpicture}
    \caption{The dual PL polytope $\PP^\vee$ of $\PP$. We depict its chart image in $M_1$ on the left, and the image in $M_2$ on the right.}
    \label{fig_s1_polytope_dual}
\end{figure}
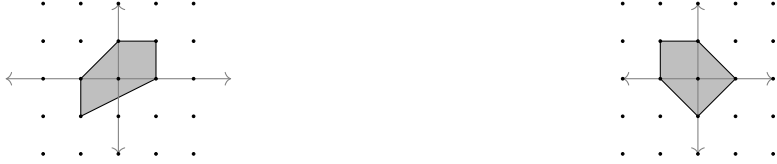

\end{Example}

We now begin an exploration of examples for the cases when $s>1$.

\begin{Example} 
In this example we take $s=2$, so the mutation is now $\mu_{1,2}(x,y)=(\min\{0,2y\}-x,y)$. In this case, the set $\mathcal{T}_2$ parametrizing the set of points $\Sp(\MM_2)$ is $\mathcal{T}_2 := \{(a,b,c) \in \Z^3 \, \mid \, a+b = \min\{0,2c\}\}$.  We consider the following set of four points in $\mathcal{T}_2$:
$$
\mathsf{p} = (-1,-1,-1), \quad \textsf{q} = (1,-1,1), \quad \mathsf{r} = (0,0,1), \quad 
\mathsf{s} = (-1,1,0).
$$
Then the chart-Gorenstein-Fano polytope $\PP$ is given by the half-spaces defined by the above points, with parameter $-1$. We illustrate its chart images in Figure~\ref{fig_s2_polytopes}. 

\begin{figure}[h]
    \centering
    \begin{tikzpicture}
    \filldraw[fill=gray!50] (.5*-1,.5*1)--(0,.5*1)--(.5*1,.5*0)--(-.5*0,-.5*1)--(.5*-1,.5*0)--(.5*-1,.5*1);
    \draw[gray,<->] (-1.5,0)--(1.5,0);
    \draw[gray,<->] (0,-1)--(0,1);
    \foreach \i in {-2,...,2}
      \foreach \j in {-2,...,2}{
        \filldraw[black] (.5*\i,.5*\j) circle(.5pt);
      };
    \end{tikzpicture}
    \hspace{5cm}
    \begin{tikzpicture}
    \filldraw[fill=gray!50] (.5*1,.5*1)--(.5*1,.5*0)--(-.5*2,-.5*1)--(0,.5*1)--(.5*1,.5*1)
    ;
    \draw[gray,<->] (-1,0)--(1,0);
    \draw[gray,<->] (0,-1)--(0,1);
    \foreach \i in {-2,...,2}
      \foreach \j in {-2,...,2}{
        \filldraw[black] (.5*\i,.5*\j) circle(.5pt);
      };
    \end{tikzpicture}
    \caption{Here we depict the chart-Gorenstein-Fano PL polytope $\PP$ in $\MM_2 \otimes \R$ given by the above points. The image $\pi_1(\PP)$ is on the left and $\pi_2(\PP)$ is on the right. }
    \label{fig_s2_polytopes}
\end{figure}
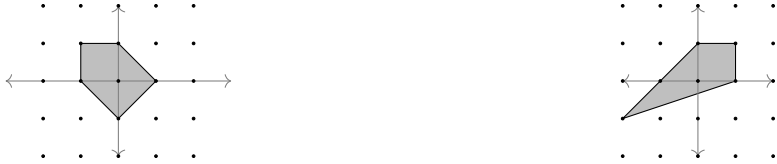

The set of vertices of this PL polytope can be seen to have image in $\pi_1$ equal to $\{(0,1),(1,0),(0,-1),(-1,0),(-1,1)\}$. Following the procedure already established we may compute the dual PL polytope $\PP^\vee$; we depict the result in Figure~\ref{fig_M2_dual_polytopes}. 

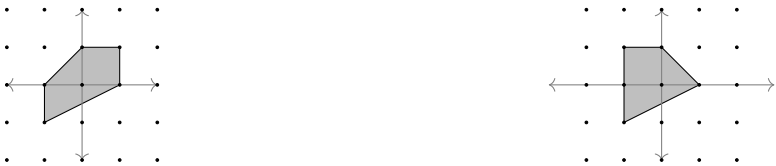
\begin{figure}[h]
    \centering
    
    \begin{tikzpicture}
    \filldraw[fill=gray!50] (.5*-1,.5*0)--(.5*0,.5*1)--(.5*1,.5*1)--(.5*1,.5*0)--(.5*-1,.5*-1)--(.5*-1,.5*0)
    ;
    \draw[gray,<->] (-1,0)--(1,0);
    \draw[gray,<->] (0,-1)--(0,1);
    \foreach \i in {-2,...,2}
      \foreach \j in {-2,...,2}{
        \filldraw[black] (.5*\i,.5*\j) circle(.5pt);
      };
    \end{tikzpicture}
    \hspace{5cm}
    \begin{tikzpicture}
    \filldraw[fill=gray!50] (.5*1,.5*0)--(.5*-1,.5*-1)--(.5*-1,.5*0)--(.5*-1,.5*1)--(.5*0,.5*1)--(.5*1,.5*0);
    \draw[gray,<->] (-1.5,0)--(1.5,0);
    \draw[gray,<->] (0,-1)--(0,1);
    \foreach \i in {-2,...,2}
      \foreach \j in {-2,...,2}{
        \filldraw[black] (.5*\i,.5*\j) circle(.5pt);
      };
    \end{tikzpicture}
    
    \caption{The two chart images of the dual PL polytope $\PP^\vee$ in $\MM_2 \otimes \R$, with the chart image in $M_1$ on the left and $M_2$ on the right.}
    \label{fig_M2_dual_polytopes}
\end{figure}

\end{Example}

\begin{Example} 
We continue with an example for $s=3$. Since the details are similar as for the previous cases, we will be brief. We choose points 
in $\mathcal{T}_3$ as follows: 
$$
\mathsf{p} = (-2,-1,-1), \quad \textsf{q} = (1,-1,1), \quad \mathsf{r} = (0,0,1).
$$
The corresponding $\PP$ is depicted in Figure~\ref{fig_M3_polytopes}.

\begin{figure}[h]
    \centering
    \begin{tikzpicture}
    \filldraw[fill=gray!50] (.5*-1,.5*1)--(.5*1,0)--(.5*0,.5*-1)--(-.5*1,-.5*0)--(.5*-1,.5*1);
    \draw[gray,<->] (-1.5,0)--(1.5,0);
    \draw[gray,<->] (0,-1)--(0,1);
    \foreach \i in {-2,...,2}
      \foreach \j in {-2,...,2}{
        \filldraw[black] (.5*\i,.5*\j) circle(.5pt);
      };
    \end{tikzpicture}
    \hspace{5cm}
    \begin{tikzpicture}
    \filldraw[fill=gray!50] (.5*1,.5*1)--(.5*1,.5*0)--(-.5*3,-.5*1)--(.5*1,.5*1)
    ;
    \draw[gray,<->] (-1,0)--(1,0);
    \draw[gray,<->] (0,-1)--(0,1);
    \foreach \i in {-2,...,2}
      \foreach \j in {-2,...,2}{
        \filldraw[black] (.5*\i,.5*\j) circle(.5pt);
      };
    \end{tikzpicture}
    \caption{A chart-Gorenstein-Fano polytope $\PP$ in $\MM_3 \otimes \R$ corresponding to the given $3$ points. The $M_1$ image is on the left and $M_2$ on the right. }
    \label{fig_M3_polytopes}
\end{figure}
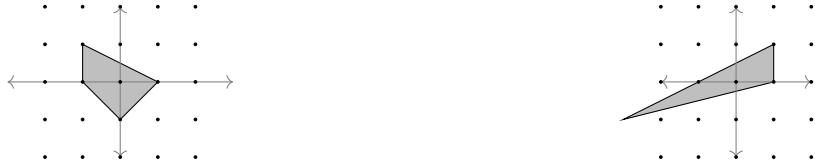

The vertices of $\PP$ have image in $M_1$ given by $\{(1,0),(-1,0),(-1,1),(0,-1)\}$. The dual PL polytope $\PP^\vee$ is depicted in Figure~\ref{fig_M3_dual_polytopes}.

\begin{figure}[h]
    \centering
    \begin{tikzpicture}
    \filldraw[fill=gray!50] (.5*0,.5*1)--(.5*1,.5*1)--(.5*1,-.5*0)--(.5*-2,.5*-1)--(.5*0,.5*1);
    \draw[gray,<->] (-1.5,0)--(1.5,0);
    \draw[gray,<->] (0,-1)--(0,1);
    \foreach \i in {-2,...,2}
      \foreach \j in {-2,...,2}{
        \filldraw[black] (.5*\i,.5*\j) circle(.5pt);
      };
    \end{tikzpicture}
    \hspace{5cm}
    \begin{tikzpicture}
    \filldraw[fill=gray!50] (.5*-1,.5*1)--(.5*0,.5*1)--(.5*1,-.5*0)--(.5*-1,.5*-1)--(.5*-1,.5*1)
    ;
    \draw[gray,<->] (-1,0)--(1,0);
    \draw[gray,<->] (0,-1)--(0,1);
    \foreach \i in {-2,...,2}
      \foreach \j in {-2,...,2}{
        \filldraw[black] (.5*\i,.5*\j) circle(.5pt);
      };
    \end{tikzpicture}
    \caption{The two chart images of the dual PL polytope $\PP^\vee$ in $\MM_3 \otimes \R$. The image in $M_1$ is on the left and $M_2$ on the right.}
    \label{fig_M3_dual_polytopes}
\end{figure}
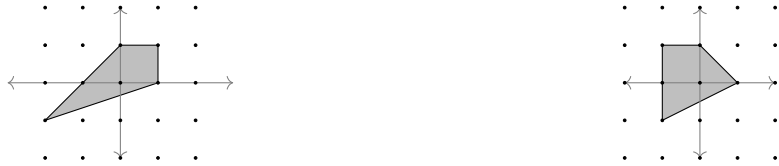
\end{Example}

\begin{Example} 
Finally, we give an example for the $s=4$ case. The points chosen are 
$$
\mathsf{p} = (-2,-2,-1), \quad \mathsf{q} = (0,0,1).
$$
It is useful to note that this exhibits different behavior of the PL situation from the classical one, since we may define a bounded PL polytope with only two PL half-spaces. The resulting PL polytope $\PP$ is shown in Figure~\ref{fig_M4_polytopes}. The vertices $V(\PP)$ have chart image in $M_1$ given by $\{(1,0),(-1,1),(-1,0),(-1,-1)\}$. The dual PL polytope $\PP^\vee$ is given in Figure~\ref{fig_M4_dual_polytopes}.

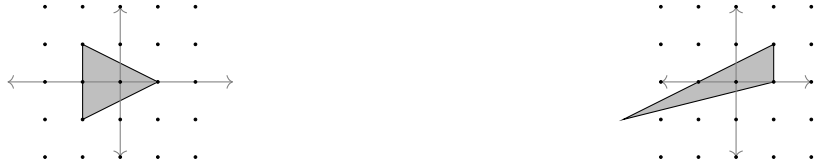
\begin{figure}[h]
    \centering
    \begin{tikzpicture}
    \filldraw[fill=gray!50] (-.5*1,.5*1)--(.5*1,.5*0)--(-.5*1,-.5*1)--(-.5*1,.5*1);
    \draw[gray,<->] (-1.5,0)--(1.5,0);
    \draw[gray,<->] (0,-1)--(0,1);
    \foreach \i in {-2,...,2}
      \foreach \j in {-2,...,2}{
        \filldraw[black] (.5*\i,.5*\j) circle(.5pt);
      };
    \end{tikzpicture}
    \hspace{5cm}
    \begin{tikzpicture}
    \filldraw[fill=gray!50] (.5*1,.5*1)--(.5*1,.5*0)--(-.5*3,-.5*1)--(.5*1,.5*1)
    ;
    \draw[gray,<->] (-1,0)--(1,0);
    \draw[gray,<->] (0,-1)--(0,1);
    \foreach \i in {-2,...,2}
      \foreach \j in {-2,...,2}{
        \filldraw[black] (.5*\i,.5*\j) circle(.5pt);
      };
    \end{tikzpicture}
    \caption{The two chart images of the chart-Gorenstein-Fano PL polytope $\PP$ in $\MM_4 \otimes \R$. The image in $M_1$ is on the left and $M_2$ on the right.}
    \label{fig_M4_polytopes}
\end{figure}

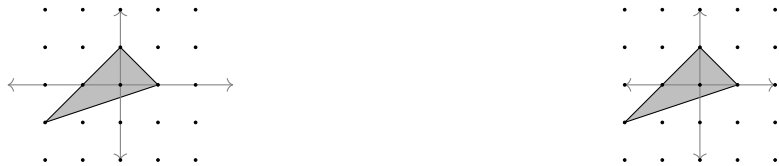
\begin{figure}[h]
    \centering
    \begin{tikzpicture}
    \filldraw[fill=gray!50] (.5*0,.5*1)--(.5*1,.5*0)--(-.5*2,-.5*1)--(.5*0,.5*1);
    \draw[gray,<->] (-1.5,0)--(1.5,0);
    \draw[gray,<->] (0,-1)--(0,1);
    \foreach \i in {-2,...,2}
      \foreach \j in {-2,...,2}{
        \filldraw[black] (.5*\i,.5*\j) circle(.5pt);
      };
    \end{tikzpicture}
    \hspace{5cm}
    \begin{tikzpicture}
    \filldraw[fill=gray!50] (.5*0,.5*1)--(.5*1,.5*0)--(-.5*2,-.5*1)--(.5*0,.5*1)
    ;
    \draw[gray,<->] (-1,0)--(1,0);
    \draw[gray,<->] (0,-1)--(0,1);
    \foreach \i in {-2,...,2}
      \foreach \j in {-2,...,2}{
        \filldraw[black] (.5*\i,.5*\j) circle(.5pt);
      };
    \end{tikzpicture}
    \caption{The two chart images of the dual PL polytope $\PP^\vee$. The chart image in $M_1$ is on the left and $M_2$ on the right, though in fact they are the same.}
    \label{fig_M4_dual_polytopes}
\end{figure}

\end{Example}

We finish with an example of a chart-Gorenstein-Fano PL polytope $\PP$ in the case $s=1$ that has the property that its dual PL polytope is \emph{not} an integral polytope. This example, together with \cite[Example 5.17]{EscobarHaradaManon-PL}, suggest that the convex geometry of dual PL polytopes is subtle.

\begin{Example}\label{example: dual not integral}
Here we choose $s=1$ so we are working in $\MM_1$. 
The points chosen are 
$$
\mathsf{p} = (0,0,1), \quad \mathsf{q} = (2,-2,1), \quad
\mathsf{r} = (-1,0,-1).
$$
As usual we choose all parameters $a_i$ defining the half-spaces to be equal to $-1$. 
The resulting chart-Gorenstein-Fano PL polytope $\PP$ is shown in Figure~\ref{fig_nonlattice_polytopes}. The vertices $V(\PP)$ have chart image in $M_1$ given by $\{(1,0),(1,-1),(-1,0),(-1,2)\}$. The dual PL polytope $\PP^\vee$ is given in Figure~\ref{fig_nonlattice_dual_polytopes}. As we can see from Figure~\ref{fig_nonlattice_dual_polytopes}, the dual PL polytope $\PP^\vee$ is \emph{not} an integral polytope due to the presence of the vertex $(0.5,0)$. 

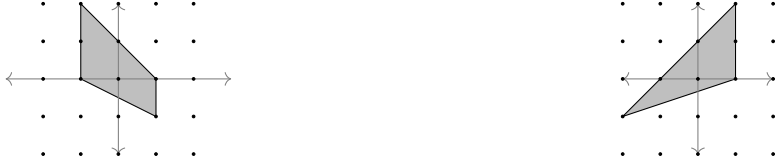
\begin{figure}[h]
    \centering
    \begin{tikzpicture}
    \filldraw[fill=gray!50] (-.5*1,.5*2)--(.5*1,.5*0)--(.5*1,-.5*1)--(-.5*1,.5*0)--(-.5*1,.5*2);
    \draw[gray,<->] (-1.5,0)--(1.5,0);
    \draw[gray,<->] (0,-1)--(0,1);
    \foreach \i in {-2,...,2}
      \foreach \j in {-2,...,2}{
        \filldraw[black] (.5*\i,.5*\j) circle(.5pt);
      };
    \end{tikzpicture}
    \hspace{5cm}
    \begin{tikzpicture}
    \filldraw[fill=gray!50] (.5*1,.5*0)--(-.5*2,-.5*1)--(.5*1,.5*2)--(.5*1,.5*0)
    ;
    \draw[gray,<->] (-1,0)--(1,0);
    \draw[gray,<->] (0,-1)--(0,1);
    \foreach \i in {-2,...,2}
      \foreach \j in {-2,...,2}{
        \filldraw[black] (.5*\i,.5*\j) circle(.5pt);
      };
    \end{tikzpicture}
    \caption{The two chart images of the chart-Gorenstein-Fano PL polytope $\PP$ in $\MM_1 \otimes \R$. The image in $M_1$ is on the left and $M_2$ on the right.}
    \label{fig_nonlattice_polytopes}
\end{figure}

\begin{figure}[h]
    \centering
    \begin{tikzpicture}
    \filldraw[fill=gray!50] (.5*1,.5*0)--(-.5*1,-.5*1)--(.5*0,.5*1)--(.5*1,.5*1)--(.5*1,.5*0);
    \draw[gray,<->] (-1.5,0)--(1.5,0);
    \draw[gray,<->] (0,-1)--(0,1);
    \foreach \i in {-2,...,2}
      \foreach \j in {-2,...,2}{
        \filldraw[black] (.5*\i,.5*\j) circle(.5pt);
      };
    \end{tikzpicture}
    \hspace{5cm}
    \begin{tikzpicture}
    \filldraw[fill=gray!50] (-.5*1,.5*0)--(-.5*1,.5*1)--(.5*0,.5*1)--(.5*0.5,.5*0)--(.5*0,-.5*1)--(-.5*1,.5*0)
    ;
    \draw[gray,<->] (-1,0)--(1,0);
    \draw[gray,<->] (0,-1)--(0,1);
    \foreach \i in {-2,...,2}
      \foreach \j in {-2,...,2}{
        \filldraw[black] (.5*\i,.5*\j) circle(.5pt);
      };
    \end{tikzpicture}
    \caption{The two chart images of the dual PL polytope $\PP^\vee$. The chart image in $M_1$ is on the left and $M_2$ on the right. Note that the chart image in $M_2$ is not an integral polytope.}
    \label{fig_nonlattice_dual_polytopes}
\end{figure}
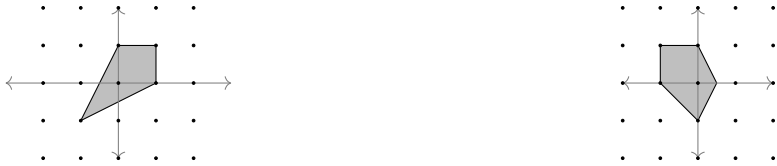

\end{Example}

\section{The polyptych lattice $\MM_s$ is detropicalizable}\label{sec: detrop MMs}

Having explored convex geometry in Sections~\ref{sec: pt conv hull example} and~\ref{sec: GF polytope}, we now return to the algebra and algebraic geometry. 
Our main goal in this section is to exhibit a detropicalization of the polyptych lattice $\MM_s$, thus showing that $\MM_s$ is detropicalizable. 
To accomplish this goal, we need some preliminaries.  

 We first identify the coordinate charts $M_1$ and $M_2$ of $\MM_s$ with sublattices of $\Z^2 \times \Z^2$. Define
$$
M_s(1) := \{(a_1,a_2,b_1,b_2) \in \Z^2 \times \Z^2 \, \mid \, a_2 = b_1 = 0\} = \{(a_1, 0,0,b_2)\}
$$
and 
$$
M_s(2) := \{(c_1,c_2,d_1,d_2) \in \Z^2 \times \Z^2 \, \mid \, c_2=d_2 =0\} = \{(c_1,0,d_1,0)\}.
$$
Now we choose identifications 
\begin{equation}\label{eq: M1 to Ms1}
\Theta_1: M_1 \to M_s(1), \quad (x,y) \mapsto (y,0,0,x) 
\end{equation}
and
\begin{equation}\label{eq: M2 to Ms2}
\Theta_2: M_2 \to M_s(2), \quad (u,v) \mapsto (v,0,u,0). 
\end{equation}
If we define a mutation map
\begin{equation}\label{eq: tilde mu 12}
\tilde{\mu}_{1,2}: M_s(1) \to M_s(2),
\quad (a,0,0,b) \mapsto 
(a, 0, \min\{0,s \cdot a\} - b, 0) 
\end{equation}
from $M_s(1) \to M_s(2)$ then it is straightforward to check that the following diagram commutes: 
$$\begin{tikzcd}
M_1 \arrow[r, "\mu_{1,2}"] \arrow[d,"\Theta_{1}" black]
& M_2 \arrow[d, "\Theta_{2}" black] \\
M_s(1) \arrow[r, black, "\tilde{\mu}_{1,2}" black]
& |[black]| M_s(2)
\end{tikzcd}$$
so we may realize $\MM_s$ in terms of the coordinate charts $M_s(1), M_s(2)$ in place of $M_1,M_2$. This will be convenient for some of our arguments below. We also specify a subset $\mathbb{M}_s$ of $\Z^2 \times \Z^2$ which we will identify with (the set of elements of) $\MM_s$ as follows: 
$$
\mathbb{M}_s := \left\{ (w_1,w_2,z_1,z_2) \in \Z^2 \times \Z^2 \, \mid \, \min\{w_1,w_2\} = 0, \, -s \cdot w_2 = z_1+z_2\right\}. 
$$
We then define maps (which we may think of as projections to coordinate charts) $\Psi_i: \mathbb{M}_s \to M_s(i)$ by 
$$
\Psi_1(\bar{w},\bar{z}) := \tilde{\pi}_1\left(\bar{w} + \frac{1}{s} \langle \bar{z}, \mathbf{1} \rangle \mathbf{1}, \bar{z}\right) \in M_s(1)
$$
and 
$$
\Psi_2(\bar{w},\bar{z}) := \tilde{\pi}_2\left(\bar{w} + \frac{1}{s} \langle \bar{z}, \mathbf{1} \rangle \mathbf{1}, \bar{z}\right) \in M_s(2) 
$$
where $\bar{w}=(w_1,w_2), \bar{z} = (z_1,z_2)$, $\mathbf{1} = (1,1)$, the pairing $\langle \cdot, \cdot \rangle$ denotes the standard inner product, and $\tilde{\pi}_i$ denotes the map which sets the $z_i$ coordinate equal to $0$. Note that $\frac{1}{s} \langle \bar{z}, \mathbf{1} \rangle = \frac{1}{s}(z_1+z_2)$ is an integer by the hypotheses on the vectors in $\mathbb{M}_s$.  In particular, it follows that we may rewrite $\Psi_1$ and $\Psi_2$ as follows: 
\begin{equation}\label{eq: Psi1}
    \Psi_1(w_1,w_2,z_1,z_2) = (w_1-w_2, 0, 0, z_2)
\end{equation}
and 
\begin{equation}
    \Psi_2(w_1,w_2,z_1,z_2) = (w_1-w_2,0,z_1,0).
\end{equation}
It is straightforward to compute the inverse of $\Psi_1$ to be 
\begin{equation}\label{eq: Psi1 inverse}
\Psi_1^{-1}(a,0,0,b) = 
\begin{cases}
    (a,0,-b,b) \, \, \quad \textup{ if } a \geq 0 \\
    (0,-a,sa-b,b) \, \, \textup{ if } a \leq 0 
\end{cases}
\end{equation}
from which it immediately follows that 
$$
\Psi_2 \circ \Psi_1^{-1}(a,0,0,b) = (a,0,\min\{0,s \cdot a\} - b, 0).
$$
Note that this is the same as the mutation map $\tilde{\mu}_{1,2}$ of~\eqref{eq: tilde mu 12}.

Just as we have identified a subset of $\Z^2 \times \Z^2$ with $\MM_s$, we now define a subset of $\Z^2 \times \Z^2$ which corresponds to the space of points $\Sp(\MM_s)$ as follows. We define 
\begin{equation}\label{eq: def Ts}
\mathbb{T}_s := \{(\alpha_1,\alpha_2,\beta_1,\beta_2) \in \Z^2 \times \Z^2 \, \mid \, \alpha_1+\alpha_2 = s \cdot \min\{\beta_1,\beta_2\}, \, \, \beta_2 = 0\}.
\end{equation}
We also define 
\begin{equation}\label{eq: def Ts1}
\mathbb{T}_s(1) := \{(\alpha_1,\alpha_2,\beta_1,\beta_2) \in \mathbb{T}_s \, \mid \, \alpha_1+\alpha_2 = s \cdot \beta_1\} 
\end{equation}
and
\begin{equation}\label{eq: def Ts2}
\mathbb{T}_s(2) := \{(\alpha_1,\alpha_2,\beta_1,\beta_2) \in \mathbb{T}_s \, \mid \, \alpha_1+\alpha_2 = s \cdot \beta_2 = 0\}
\end{equation}
so $\mathbb{T}_s(1)$ consists of those tuples $(\alpha_1,\alpha_2,\beta_1,\beta_2) \in \mathbb{T}_s$ where $\beta_1 \leq \beta_2 =0$, whereas $\mathbb{T}_s(2)$ consists of the tuples where $\beta_2=0 \leq \beta_1$. 

Given a tuple $(\bar{\alpha},\bar{\beta}) = (\alpha_1,\alpha_2,\beta_1,\beta_2) \in \mathbb{T}_s$, we may define an associated function $f_{(\bar{\alpha},\bar{\beta})}$ on $\mathbb{M}_s$ as follows: 
\begin{equation}\label{eq: def f ab}
f_{(\bar{\alpha},\bar{\beta})}(w_1,w_2,z_1,z_2):=\alpha_1w_1 + \alpha_2 w_2 + \beta_1 z_1 + \beta_2 z_2 = \alpha_1 w_1 + \alpha_2 w_2 + \beta_1 z_1
\end{equation}
where the last equality holds since $\beta_2=0$ by assumption. Notice that $f_{(\bar{\alpha},\bar{\beta})}$ is simply the restriction to $\mathbb{M}_s$ of the usual standard inner product pairing with $(\bar{\alpha},\bar{\beta})$, but since $\mathbb{M}_s$ is not a linear subspace (additive subgroup) of $\Z^2 \times \Z^2$, we cannot discuss linearity on $\mathbb{M}_s$, and in particular, it is not a linear map. Using a sequence of bijections $\pi_1: \MM_s \to  M_1$, $\Theta_1: M_1 \to M_s(1)$, and $\Psi_1^{-1}: M_s(1) \to \mathbb{M}_s$, we may pullback the function $f_{(\bar{\alpha},\bar{\beta})}$ to $\MM_s$, thus defining a function on $\MM_s$. 
The following lemma shows that this association gives a bijection from $\mathbb{T}_s$ to $\Sp(\MM_s)$. 

\begin{Lemma}\label{lemma: Ts and Sp MMs}
Let $\Phi$ denote the association $(\bar{\alpha}, \bar{\beta}) \in \mathbb{T}_s \mapsto p_{(\bar{\alpha},\bar{\beta})} := f_{(\bar{\alpha},\bar{\beta})} \circ \Psi_1^{-1} \circ \Theta_1 \circ \pi_1: \MM_s \to \Z$ where $f_{(\bar{\alpha},\bar{\beta})}$ is the function on $\mathbb{M}_s$ defined above. Then 
\begin{enumerate} 
\item $\Phi$ has image $\Sp(\MM_s)$, and
\item $\Phi$ defines a bijection from $\mathbb{T}_s$ to $\Sp(\MM_s)$, and 
\item $\Phi$ respects addition when restricted to $\mathbb{T}_s(i)$ for $i=1$ or $i=2$. More precisely, for fixed $i$ and for $(\bar{\alpha},\bar{\beta}),(\bar{\alpha}', \bar{\beta}') \in \mathbb{T}_s(i)$, we have $\Phi((\bar{\alpha},\bar{\beta})+(\bar{\alpha}', \bar{\beta}')) = \Phi(\bar{\alpha},\bar{\beta}) + \Phi(\bar{\alpha}',\bar{\beta}')$ as functions on $\MM_s$. 
\end{enumerate} 
\end{Lemma}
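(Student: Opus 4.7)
The plan is to establish all three claims by explicitly evaluating $p_{(\bar{\alpha},\bar{\beta})}$ on the three distinguished elements $\mathfrak{e}_1, \mathfrak{e}_2, \mathfrak{e}'_2 \in \MM_s$ and then invoking Proposition~\ref{proposition: points of Sp MMs}, which characterizes points of $\Sp(\MM_s)$ by their values on these three elements, subject to the single compatibility $p(\mathfrak{e}_2) + p(\mathfrak{e}'_2) = \min\{0, s \cdot p(\mathfrak{e}_1)\}$. Since the chain of bijections $\pi_1$, $\Theta_1$ (from~\eqref{eq: M1 to Ms1}), and $\Psi_1^{-1}$ (from~\eqref{eq: Psi1 inverse}) is already available in closed form, the computation reduces to careful bookkeeping.

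For part (1), I would first trace each of $\mathfrak{e}_1, \mathfrak{e}_2, \mathfrak{e}'_2$ through the chain to obtain their explicit images in $\mathbb{M}_s$, and then apply $f_{(\bar{\alpha},\bar{\beta})}$. A direct calculation yields $p_{(\bar{\alpha},\bar{\beta})}(\mathfrak{e}_1) = -\beta_1$, $p_{(\bar{\alpha},\bar{\beta})}(\mathfrak{e}_2) = \alpha_1$, and $p_{(\bar{\alpha},\bar{\beta})}(\mathfrak{e}'_2) = \alpha_2 - s\beta_1$ (the last one is where the two branches of $\Psi_1^{-1}$ actually interact with the parameter $s$). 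The required compatibility then reads $\alpha_1 + \alpha_2 - s\beta_1 = \min\{0, -s\beta_1\}$, which after a case-split on the sign of $\beta_1$ rearranges to $\alpha_1 + \alpha_2 = s\min\{\beta_1, 0\}$ — precisely the defining equation of $\mathbb{T}_s$, using $\beta_2 = 0$. Hence $p_{(\bar{\alpha},\bar{\beta})}$ is a point, establishing (1).

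For part (2), the same computation expresses $\Phi$ as the composite of the explicit $\Z$-linear map $\mathbb{T}_s \to \mathcal{T}_s \cap \Z^3$, $(\alpha_1, \alpha_2, \beta_1, 0) \mapsto (-\beta_1, \alpha_1, \alpha_2 - s\beta_1)$, with the identification~\eqref{eq: Ts identified with Sp MMs}. Its candidate inverse $(c, a, b) \mapsto (a, b - sc, -c, 0)$ is equally explicit, and one checks that it carries the $\mathcal{T}_s$-constraint $a + b = \min\{0, sc\}$ to the $\mathbb{T}_s$-constraint, giving the desired bijection. For part (3), the key observation is that by its very definition~\eqref{eq: def f ab}, $f_{(\bar{\alpha},\bar{\beta})}$ is $\Z$-linear in the parameters, so the identity $f_{(\bar{\alpha}+\bar{\alpha}', \bar{\beta}+\bar{\beta}')} = f_{(\bar{\alpha},\bar{\beta})} + f_{(\bar{\alpha}', \bar{\beta}')}$ holds pointwise on $\mathbb{M}_s$, and this is preserved by precomposition with the fixed chain $\Psi_1^{-1} \circ \Theta_1 \circ \pi_1$. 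The only nontrivial point is that the sum $(\bar{\alpha}+\bar{\alpha}', \bar{\beta}+\bar{\beta}')$ must itself lie in $\mathbb{T}_s$ for the left-hand side $\Phi(\bar{\alpha}+\bar{\alpha}', \bar{\beta}+\bar{\beta}')$ to be defined; this is exactly why one restricts to a single $\mathbb{T}_s(i)$. On $\mathbb{T}_s(1)$ the defining compatibility becomes the \emph{honest} linear equation $\alpha_1 + \alpha_2 = s\beta_1$, and on $\mathbb{T}_s(2)$ it becomes $\alpha_1 + \alpha_2 = 0$; each defines a subgroup of $\Z^2 \times \Z^2$, closed under addition.

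The main difficulty I expect is not conceptual but purely administrative: carefully handling the sign case-splits in the piecewise formula~\eqref{eq: Psi1 inverse} for $\Psi_1^{-1}$ and the piecewise-linear $\min\{0, \cdot\}$ term appearing in both $\mathcal{T}_s$ and $\mathbb{T}_s$. No substantive obstacle beyond this is anticipated, and indeed the entire proof is essentially a verification that the splitting of $\Sp(\MM_s)$ into its two linear halves $\Sp(\MM_s, i)$ is mirrored on the $\mathbb{T}_s$ side by the splitting into $\mathbb{T}_s(i)$.
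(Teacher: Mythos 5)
Your proposal is correct and follows essentially the same route as the paper: trace the chain of bijections to compute the explicit form of $p_{(\bar\alpha,\bar\beta)}$, read off $p(\mathfrak{e}_1)=-\beta_1$, $p(\mathfrak{e}_2)=\alpha_1$, $p(\mathfrak{e}'_2)=\alpha_2-s\beta_1$, reduce the compatibility to the defining equation of $\mathbb{T}_s$, exhibit $\Upsilon$ as the explicit inverse for (2), and appeal to linearity of the pairing plus closure of $\mathbb{T}_s(i)$ under addition for (3). One small caution: Proposition~\ref{proposition: points of Sp MMs} does not characterize points of $\Sp(\MM_s)$ \emph{merely} by their values on $\mathfrak{e}_1,\mathfrak{e}_2,\mathfrak{e}'_2$ — it characterizes them as functions of the specific piecewise-linear form~\eqref{eq_ex_point} satisfying the compatibility — so for part (1) you must also note that $f_{(\bar\alpha,\bar\beta)}\circ\Psi_1^{-1}\circ\Theta_1$ is automatically piecewise linear with domains $\{y\ge 0\}$ and $\{y\le 0\}$ (since $\Theta_1$ sends $y$ to the $a$-coordinate that governs the two branches of $\Psi_1^{-1}$), which is exactly what the paper accomplishes by recording the closed-form expression~\eqref{eq: pullback fab to M1}.
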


\begin{proof} 
Since $\pi_1$ is a bijection, a point in $\Sp(\MM_s)$ can be uniquely described by its values on $M_1$, so we may consider instead the function $f_{(\bar{\alpha},\bar{\beta})} \circ \Psi_1^{-1} \circ \Theta_1: M_1 \to \Z$ and verify that it has the form given in Proposition~\ref{proposition: points of Sp MMs}. From the formulas for $\Theta_1$ and $\Psi_1^{-1}$ it is straightforward to compute that 
\begin{equation}\label{eq: pullback fab to M1}
f_{(\bar{\alpha},\bar{\beta})}\circ \Psi_1^{-1} \circ \Theta_1(x,y) = 
\begin{cases}
-\beta_1 x + (s \cdot \beta_1 -\alpha_2) y \, \, \, \textup{ if } y \leq 0, \\
    -\beta_1 x + \alpha_1 y \, \, \quad \quad \quad \quad \textup{ if } y \geq 0. \\
    \end{cases}
\end{equation}
In notation motivated by Proposition~\ref{proposition: points of Sp MMs}, set $p(\mathfrak{e}_1) = - \beta_1, p(\mathfrak{e}_2) = \alpha_1$, and $p(\mathfrak{e}'_2) = -(s\cdot \beta_1-\alpha_2)$. Now by Proposition
\ref{proposition: points of Sp MMs}, the function~\eqref{eq: pullback fab to M1} is an element of $\Sp(\MM_s)$ if and only if $p(\mathfrak{e}_2)+p(\mathfrak{e}'_2) = \min\{0,s \cdot p(\mathfrak{e}_1)\}$. We have 
\begin{equation*}
\begin{split}
    p(\mathfrak{e}_2)+p(\mathfrak{e}'_2) = \min\{0,s \cdot p(\mathfrak{e}_1)\} & \iff \alpha_1+\alpha_2-s\cdot \beta_1= \min\{0, -s \cdot \beta_1\}\\
    & \iff \alpha_1+\alpha_2 = \min\{s \cdot \beta_1, 0\} \\
    & \iff \alpha_1+\alpha_2 = s \min\{\beta_1, 0\} \, \, \textup{ since } s \geq 0. \\
\end{split}
\end{equation*}
From the above reasoning, it follows that if $(\alpha_1,\alpha_2,\beta_1,\beta_2=0)$ is in $\mathbb{T}_s$, then~\eqref{eq: pullback fab to M1} is in $\Sp(\MM_s)$. Moreover, for any $p \in \Sp(\MM_s)$ with corresponding values of $p(\mathfrak{e}_1), p(\mathfrak{e}_2)$ and $p(\mathfrak{e}'_2)$, we can take $\alpha_1 = p(\mathfrak{e}_2), \beta_1 = - p(\mathfrak{e}_1), \alpha_2 = p(\mathfrak{e}'_2) + s \cdot \beta_1$ and $\beta_2=0$ to obtain $p$ as $\Phi(\alpha_1,\alpha_2,\beta_1,\beta_2)=p$, so $\Phi$ is a bijection as claimed. 

The last claim follows from the fact that $f_{(\overline{\alpha},\overline{\beta})}$ is defined as the restriction of the standard inner product, which is linear in both variables, and the fact that $\mathbb{T}_s(i)$ is closed under addition for both $i=1$ and $i=2$. 
\end{proof}

Motivated by the above proof, we define the following bijection between $\mathcal{T}_s$ and $\mathbb{T}_s$: 
\begin{equation}\label{eq: def upsilon}
\Upsilon: \mathcal{T}_s \to \mathbb{T}_s, \quad (a,b,c) \mapsto (a,b-sc,-c,0).
\end{equation}
Next, we wish to translate the self-dual pairing $\mathbf{w}_s: \MM_s \to \Sp(\MM_s)$ from Section \ref{sec: MM_s} into the language of $\mathbb{M}_s$ and $\mathbb{T}_s$. Using the identifications $\Psi_1: \mathbb{M}_s \to M_s(1), \Theta^{-1}: M_s(1) \to M_1$, and $\Upsilon: \mathcal{T}_s \to \mathbb{T}_s$, it is straightforward to compute that when the map $\tilde{\mathbf{w}}_s := \Upsilon \circ \mathbf{w}_s \circ \Theta^{-1} \circ \Psi_1: \mathbb{M}_s \to \mathbb{T}_s$ given by composition of $\mathbf{w}_s$ with these identifications is given by 
\begin{equation}\label{eq: def tilde ws} 
\tilde{\mathbf{w}}_s(w_1, w_2, z_1, z_2) = 
\begin{cases} 
(z_2, -z_2 - s(w_1-w_2), -(w_1-w_2), 0) \quad \quad \textup{ if } \, w_1-w_2 \geq 0 \\
(z_2, -z_2, -(w_1-w_2), 0) \quad \quad \quad \quad \quad \quad \textup{ if } \, w_1-w_2 \leq 0. 
\end{cases} 
\end{equation}
Recall that tuples $(w_1, w_2,z_1,z_2)$ in $\mathbb{M}_s$ satisfy $\min\{w_1,w_2\}=0$. Thus the conditions $w_1-w_2 \geq 0$ and $w_1-w_2 \leq 0$ can be rephrased as $w_2=0$ and $w_1=0$ respectively, and this reformulation is also used below.

Let us now consider the following algebra:
\begin{equation}\label{eq: def algebra As}
\mathcal{A}_s = \mathbb{C}[x_1,x_2,y_1,y_2,y_1^{-1},y_2^{-1}]/\langle x_1x_2 - y_1^s -y_2^s, ~y_2 -1\rangle.
\end{equation}
It is straightforward to see that $\mathcal{A}_s$ is a Noetherian $\C$-algebra and an integral domain. 
Our main goal of this section is to prove that $\mathcal{A}_s$ can be equipped with a valuation $\fv_s$ in such a way that the pair $(\mathcal{A}_s, \fv_s)$ is a detropicalization of $\MM_s$. 
To do this, we first identify an additive basis of $\mathcal{A}_s$. Consider the following set: 
\begin{equation}\label{eq: def Bs basis}
\mathbb{B}_s := \{x_1^{w_1}x_2^{w_2}y_1^{z_1}y_2^{z_2} ~|~ (w_1,w_2,z_2,z_2) \in \mathbb{M}_s\} \subset \mathcal{A}_s.
\end{equation}
To see that $\mathbb{B}_s$ forms an additive basis for $\mathcal{A}_s$, we may argue in two steps. First suppose that the defining ideal consists of the single relation $x_1x_2 - y_1^s - y_2^2$. Then,  
since there is a monomial ordering $<$ such that the initial term of this relation is $x_1x_2$, it is immediate from standard results of Gr\"obner bases \cite[Proposition 1.1]{Sturmfels-Grobner} that the monomials $x_1^{w_1}x_2^{w_2}y_1^{z_1}y_2^{z_2}$ with $\min\{w_1,w_2\}=0$ form a basis for $\C[x_1,x_2,y_1,y_2]/\langle x_1x_2 - y_1^s - y_2^s\rangle$. For $\mathcal{A}_s$, however, we also have the additional defining relation $y_2=1$. This means that we may take as additive basis a set of monomials where the exponent $z_2$ of $y_2$ is uniquely determined by the exponents on the other variables. (It would be conventional simply to pick $z_2=0$ at all times, but it will be more convenient for us to pick $z_2$ to be a function of $w_1,w_2,z_1$.) In our setting, we choose $z_2 = -z_1-s w_2$. This argument shows that $\mathbb{B}_s$ is an additive basis of $\mathcal{A}_s$. We record this statement in the following. 

\begin{Lemma}
The image of the set $\mathbb{B}_s$ under the projection $\C[x_1,x_2,y_1^{\pm}, y_2^{\pm}] \to \mathcal{A}_s$ is an additive basis for $\mathcal{A}_s$. In particular, there is a one-to-one correspondence between $\mathbb{B}_s$ and $\mathbb{M}_s$, given by taking the exponent vector of a monomial in $\mathbb{B}_s$. 
\end{Lemma}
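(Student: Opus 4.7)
The plan is to factor the computation of a basis into two stages, corresponding to the two generators of the defining ideal of $\mathcal{A}_s$. First, I would analyze the intermediate algebra
\[
R := \mathbb{C}[x_1, x_2, y_1^{\pm 1}, y_2^{\pm 1}]/\langle x_1 x_2 - y_1^s - y_2^s\rangle,
\]
obtained by quotienting only by the hypersurface relation. Viewing $R$ as a quotient of the Laurent extension $\mathbb{C}[y_1^{\pm}, y_2^{\pm}][x_1, x_2]$ and choosing any monomial order on $x_1, x_2$ under which $x_1 x_2$ is the leading term of the generator, standard Gröbner basis theory (cf. \cite[Proposition 1.1]{Sturmfels-Grobner}) implies that the standard monomials—those $x_1^{w_1} x_2^{w_2}$ with $\min\{w_1,w_2\} = 0$—form an $\mathbb{C}[y_1^{\pm}, y_2^{\pm}]$-basis for $R$. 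Tensoring with the evident $\mathbb{C}$-basis $\{y_1^{z_1} y_2^{z_2} \mid (z_1, z_2) \in \mathbb{Z}^2\}$ of the Laurent ring yields a full $\mathbb{C}$-basis of $R$ consisting of all monomials $x_1^{w_1} x_2^{w_2} y_1^{z_1} y_2^{z_2}$ with $\min\{w_1, w_2\} = 0$ and arbitrary $z_1, z_2 \in \mathbb{Z}$.

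Second, I would pass from $R$ to $\mathcal{A}_s = R/\langle y_2 - 1\rangle$. In this quotient, $y_2 \equiv 1$, so each class is represented by an entire $\mathbb{Z}$-family of basis elements of $R$—namely all those with the same $(w_1, w_2, z_1)$ but varying $z_2$. To single out one representative per class we choose, for each admissible triple $(w_1, w_2, z_1)$, a preferred value of $z_2$. The convenient choice is $z_2 := -z_1 - s w_2$, which precisely enforces the constraint $z_1 + z_2 = -s w_2$ defining $\mathbb{M}_s$; combined with $\min\{w_1, w_2\} = 0$, this identifies admissible tuples with elements of $\mathbb{M}_s$.

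To finish, I would verify that this choice of representatives does give a basis of $\mathcal{A}_s$. Spanning is automatic: any class of $R$ modulo $(y_2 - 1)$ can be rewritten, using $y_2 \equiv 1$, into the chosen representative. For linear independence in $\mathcal{A}_s$, the point is that within the $R$-basis above, the chosen representatives are pairwise distinct (the map $(w_1, w_2, z_1) \mapsto (w_1, w_2, z_1, -z_1 - s w_2)$ is clearly injective), and they all lie in pairwise distinct cosets of $(y_2 - 1)R$ since their $(w_1, w_2, z_1)$-data already differ. Thus $\mathbb{B}_s$ descends to a $\mathbb{C}$-linearly independent spanning set in $\mathcal{A}_s$, i.e., a basis. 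The final bijection $\mathbb{B}_s \leftrightarrow \mathbb{M}_s$ is manifest from the construction: send a monomial to its exponent vector.

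The main potential obstacle is ensuring that the Gröbner argument at the first stage is legitimate in the presence of the Laurent variables $y_1, y_2$. I would handle this by noting that $y_1^{\pm}, y_2^{\pm}$ play the role of scalars in the relation $x_1 x_2 = y_1^s + y_2^s$, so the Gröbner reduction only acts on the $x_1, x_2$ variables and is unaffected by inversion of the $y_i$; equivalently, one can work over the coefficient ring $\mathbb{C}[y_1^{\pm}, y_2^{\pm}]$ where the leading coefficient $1$ of $x_1 x_2$ is a unit, so the standard monomial theorem applies verbatim.
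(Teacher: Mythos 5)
Your proof follows essentially the same two-step strategy as the paper: first invoke the Gr\"obner basis fact (with a monomial order making $x_1x_2$ the initial term of $x_1x_2 - y_1^s - y_2^s$) to get standard monomials with $\min\{w_1,w_2\}=0$, then impose $y_2 = 1$ and pin down one representative per class by choosing $z_2 = -z_1 - s w_2$, matching the defining constraint of $\mathbb{M}_s$. You are somewhat more careful than the paper in explicitly treating $y_1^{\pm},y_2^{\pm}$ as units in the coefficient ring so that the Gr\"obner argument applies over the Laurent ring, whereas the paper cites \cite[Proposition 1.1]{Sturmfels-Grobner} for the polynomial ring $\C[x_1,x_2,y_1,y_2]$ and passes over this point silently.
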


We are now ready to define a valuation $\mathfrak{v}_s$ which realizes $\mathcal{A}_s$ as a detropicalization of $\MM_s$. Let $\mathfrak{v}_s: \mathbb{B}_s \rightarrow Sp(\MM_b)$ be the function defined as follows. For any $(w_1, w_2, z_1, z_2) \in \mathbb{M}_s$, we have just seen that the monomial $x_1^{w_1}x_2^{w_2}y_1^{z_1}y_2^{z_2}$ is in $\mathbb{B}_s$ and we define: 
\begin{equation}\label{eq: def vs on Bs} 
\mathfrak{v}_s(x_1^{w_1}x_2^{w_2}y_1^{z_1}y_2^{z_2}) := \Phi \circ \tilde{\mathbf{w}}_s(w_1,w_2,z_1,z_2)
\end{equation}
where $\Phi: \mathbb{T}_s \to \Sp(\MM_s)$ is the bijection constructed in Lemma~\ref{lemma: Ts and Sp MMs}. 
 
Since $\mathbb{B}_s$ is a basis of $\mathcal{A}_s$, we may then extend $\mathfrak{v}_s$ to a function on the algebra $\mathcal{A}_s$ by defining 
\begin{equation}\label{eq: def vs on As}
\mathfrak{v}_s(\sum \lambda_i \mathbb{b}_i) := \bigoplus \mathfrak{v}(\mathbb{b}_i) \in P_{\MM_s} 
\end{equation}
for any linear combination $\sum_i \lambda_i \mathbb{b}_i$ of elements $\mathbb{b}_i \in \mathbb{B}_i$ where $\lambda_i \in \mathbb{C}$ are scalars. By definition we also set $\fv_s(0) := \infty$. 
Recall that we think of elements of $\Sp(\MM_s)$ as piecewise-linear functions on $\MM_s$ and the operation $\oplus$ is the min-combination of functions.

We are now ready to state and prove the main theorem. 

\begin{Theorem}\label{theorem: MMs detrop}
Let $s$ be a positive integer and let $\MM_s$ be the polyptych lattice asociated to $s$ defined in Section~\ref{sec: MM_s}. Let $\mathcal{A}_s$ be the $\C$-algebra defined in~\eqref{eq: def algebra As} and let $\mathfrak{v}_s: \mathcal{A}_s \to P_{\MM_s}$ be the map defined in~\eqref{eq: def vs on As}. Then: 
\begin{enumerate} 
\item $\mathfrak{v}_s: \mathcal{A}_s \to P_{\MM_s}$ is a valuation of on $\mathcal{A}_s$ with values in the idempotent semialgebra $P_{\MM_s}$ in the sense of Definition~\ref{def_valuation}, and 
\item the pair $(\mathcal{A}_s, \mathfrak{v}_s)$ is a detropicalization of $\MM_s$ in the sense of Definition~\ref{definition: detropicalization}. 
\end{enumerate} 
\end{Theorem}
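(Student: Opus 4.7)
The plan is to split into part (1) (the valuation axioms) and part (2) (the detropicalization conditions), handling the lighter part (2) first. For (2), the surjectivity of $\fv_s$ onto $\Sp(\MM_s) \cong \MM_s$ is immediate from the construction, since on the basis $\mathbb{B}_s$ the valuation is $\Phi \circ \tilde{\w}_s$ applied to exponent vectors, and each factor is a bijection: $\tilde{\w}_s: \mathbb{M}_s \to \mathbb{T}_s$ from the self-dual pairing $\w_s$, and $\Phi: \mathbb{T}_s \to \Sp(\MM_s)$ by Lemma~\ref{lemma: Ts and Sp MMs}(2). For the Krull dimension, I would use the relation $y_2 = 1$ to identify $\mathcal{A}_s \cong \C[x_1, x_2, y_1^{\pm}]/\langle x_1 x_2 - y_1^s - 1\rangle$. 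The polynomial $x_1 x_2 - y_1^s - 1$ is irreducible in $\C[x_1,x_2,y_1]$ (it is linear in $x_1$ with coefficient $x_2$ and constant term $-y_1^s - 1$, which are coprime in $\C[x_2, y_1]$), so the quotient is a two-dimensional integral domain, and localizing at $y_1$ preserves Krull dimension $2 = \mathrm{rank}(\MM_s)$.

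For part (1), three of the four axioms of Definition~\ref{def_valuation} are formal consequences of the definition of $\fv_s$ via basis expansion together with the tropical operations $\oplus = \min$ and $\otimes = +$: $\fv_s(0) = \infty$ is by definition, scalar invariance is immediate since rescaling preserves the support of the basis expansion, and sub-additivity $\fv_s(f+g) \geq \fv_s(f) \oplus \fv_s(g)$ holds because the support of $f+g$ is contained in the union of those of $f$ and $g$. The substantive axiom is multiplicativity $\fv_s(fg) = \fv_s(f) \otimes \fv_s(g)$, which I would attack first for pairs of basis elements, then bootstrap via the tropical distributive law $\min_{i,j}(A_i + B_j) = \min_i A_i + \min_j B_j$.

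The basis-level claim $\fv_s(\mathbb{b} \cdot \mathbb{b}') = \fv_s(\mathbb{b}) + \fv_s(\mathbb{b}')$ splits according to the chart structure of $\mathbb{M}_s$. In the \emph{same-chart case}, where both exponent vectors lie in $\{w_2 = 0\}$ (or both in $\{w_1 = 0\}$), the product $\mathbb{b} \mathbb{b}'$ lies directly in $\mathbb{B}_s$ with exponent equal to the sum; on this region $\tilde{\w}_s$ is linear with image in $\mathbb{T}_s(1)$ (resp.\ $\mathbb{T}_s(2)$), so the additivity of $\Phi$ on each $\mathbb{T}_s(i)$ provided by Lemma~\ref{lemma: Ts and Sp MMs}(3) immediately yields the claim. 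The \emph{mixed-chart case}, where one exponent is in $\{w_2 = 0\}$ and the other in $\{w_1 = 0\}$, requires reducing the monomial $x_1^{w_1} x_2^{w'_2}$ via $x_1 x_2 = y_1^s + 1$: setting $k = \min(w_1, w'_2)$, binomial expansion produces $\mathbb{b} \mathbb{b}' = \sum_{j=0}^{k} \binom{k}{j} \mathbb{b}_j$ with all $\mathbb{b}_j \in \mathbb{B}_s$ lying in the single chart corresponding to $\max(w_1, w'_2)$, and a direct computation in $M_1$-coordinates via~\eqref{eq: pullback fab to M1} confirms that $\min_j \fv_s(\mathbb{b}_j)$ matches the PL sum $\fv_s(\mathbb{b}) + \fv_s(\mathbb{b}')$ on both halfspaces $\{y \geq 0\}$ and $\{y \leq 0\}$ (the extremal indices $j=0$ and $j=k$ selecting the two linear pieces).

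The main obstacle is this mixed-chart computation, together with the no-cancellation argument needed to bootstrap from basis pairs to arbitrary $f, g$. For the bootstrap, axiom (ii) combined with basis multiplicativity and tropical distributivity already yields $\fv_s(fg) \geq \fv_s(f) + \fv_s(g)$; the reverse inequality requires that the basis element realizing the tropical minimum $\min_{i,j}(\fv_s(\mathbb{b}_i) + \fv_s(\mathbb{b}_j))$ appears with nonzero coefficient in the basis expansion of $fg$. The cleanest route is to observe that any basis element $\mathbb{c}$ of this minimal valuation can only arise from pairs $(\mathbb{b}_i, \mathbb{b}_j)$ satisfying $\fv_s(\mathbb{b}_i) + \fv_s(\mathbb{b}_j) = \fv_s(\mathbb{c})$, and inspection of the leading binomial terms (those with extremal coefficient $\binom{k}{0} = 1$ or $\binom{k}{k} = 1$) shows that no cancellation is possible among such extremal contributions.
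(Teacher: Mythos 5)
Your proposal follows essentially the same route as the paper's: part (2) is dispatched via bijectivity of $\Phi \circ \tilde{\w}_s$ on the basis and a dimension count, and part (1) reduces to multiplicativity on basis elements, split into same-chart and mixed-chart cases, with the mixed-chart case handled by rewriting $x_1 x_2 = y_1^s + y_2^s$ and expanding binomially. Your Krull-dimension argument (irreducibility of $x_1x_2 - y_1^s - 1$) is more detailed than the paper's one-line assertion, and your mixed-chart computation is carried out directly in $M_1$-coordinates via \eqref{eq: pullback fab to M1} rather than via the paper's auxiliary function $g_{(c,d,0,0)}$ and the observation that it vanishes identically on $\mathbb{M}_s$; these are equivalent framings of the same calculation (both rely on $\min\{w_1,w_2\}=0$ selecting the extremal index $k=0$ or $k=w_1'$).

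The one substantive divergence is the bootstrap from basis multiplicativity to $\fv_s(fg) = \fv_s(f) + \fv_s(g)$ for arbitrary $f,g$. The paper sidesteps this by citing [EHM, Lemma 8.10]. You attempt to sketch it, and your first half (the inequality $\fv_s(fg) \geq \fv_s(f) + \fv_s(g)$ via subadditivity, basis multiplicativity, and tropical distributivity) is sound. But the reverse inequality hinges on the assertion that ``no cancellation is possible among such extremal contributions,'' justified only by noting the extremal binomial coefficients $\binom{k}{0}=\binom{k}{k}=1$. This is not enough: when $f = \sum_i \lambda_i \bb_i$ and $g = \sum_j \mu_j \bb_j$ with several pairs $(i,j)$ achieving the minimal valuation, the same target basis element $\bb_{\mathbf{c}}$ can be produced by multiple pairs, each contributing $\lambda_i \mu_j$ times a coefficient of $\pm 1$, and nothing in your argument rules out the sum $\sum \lambda_i \mu_j$ vanishing over those pairs. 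What is actually needed is a convexity/uniqueness argument showing that the minimizing $(i,j)$ is essentially unique pointwise, or an injectivity statement about the map from minimizing pairs to extremal monomials; this is the content of the cited lemma and should not be dismissed as ``inspection.'' Since you explicitly flagged this as the main obstacle, the gap is more a matter of incomplete execution than a wrong turn, but as written the bootstrap is not established.
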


\begin{proof}[Proof of Theorem~\ref{theorem: MMs detrop}] 
We begin with the claim (1). To prove it, we must check the conditions for a valuation as listed in Definition~\ref{def_valuation}. Suppose that $f, g \in \mathcal{A}_s$. We wish to prove that $\fv_s(fg) = \fv(f) \odot \fv(g)$. Recalling that the $\odot$ operation in $P_{\MM_s}$ is given by pointwise addition of functions, this is equivalent to showing that $\fv_s(fg) = \fv_s(f) + \fv_s(g)$ as functions on $\MM_s$. 

We take cases.  First suppose that $f=\bb=x_1^{w_1}x_2^{w_2}y_1^{z_1}y_2^{z_2} \in \mathbb{B}_s, g=\bb'=x_1^{w'_1}x_2^{w'_2}y_1^{z'_1}y_2^{z'_2} \in \mathbb{B}_s$, so both $f$ and $g$ are monomials with exponent vectors contained in $\mathbb{M}_s$, and additionally assume that the product monomial $\bb \bb'$ is in $\mathbb{B}_s$, i.e., $(w_1+w'_1,w_2+w'_2,z_1+z'_1,z_2+z'_2)$ is contained in $\mathbb{M}_s$. By definition, $\fv_s(\bb \bb') = \Phi \circ \tilde{\mathbf{w}}_s(\bb \bb')$, and if $\bb,\bb' \in \mathbb{B}_s$ then this implies that either $w_1=w'_1=0$ or $w_2=w'_2=0$. In either case, the definition of $\tilde{\mathbf{w}}_s$ in~\eqref{eq: def tilde ws} implies that $\tilde{\mathbf{w}}_s(w_1+w'_1, w_2+w'_2,z_1+z'_1, z_2+z'_2) = \tilde{\mathbf{w}}_s(w_1,w_2,z_1,z_2) + \tilde{\mathbf{w}}_s(w'_1,w'_2,z'_1,z'_2)$, and moreover, all three images under $\tilde{\mathbf{w}}_s$ lie in $\mathbb{T}_s(i)$ for some $i$. Then Lemma~\ref{lemma: Ts and Sp MMs}(3) implies that $\Phi \circ \tilde{\mathbf{w}}_s$ is also additive on $(w_1,w_2,z_1,z_2)+(w'_1,w'_2,z'_1,z'_2)$, so by definition $\fv_s(\bb \bb')=\fv_s(\bb)+\fv_s(\bb')$ in this case. 

Next we consider the case $f=\bb=x_1^{w_1}x_2^{w_2}y_1^{z_1}y_2^{z_2} \in \mathbb{B}_s, g=\bb'=x_1^{w'_1}x_2^{w'_2}y_1^{z'_1}y_2^{z'_2} \in \mathbb{B}_s$, where this time we suppose that $\bb \bb'$ is not in $\mathbb{B}_s$. This means that $w_1+w'_1>0$ and $w_2 + w'_2 >0$. Since we know that $\min\{w_1,w_2\}=0=\min\{w'_1,w'_2\}$ by assumption, we may assume without loss of generality that $w_1=0,w_2>0, w'_1>0, w'_2=0$. In order to prove $\fv(\bb \bb')=\fv(\bb)+\fv(\bb')$, we compute both sides as functions on $\MM_s$. Since $\bb,\bb' \in \mathbb{B}_s$, the RHS may be computed from the definitions to be 
$$
\Phi\circ \tilde{\mathbf{w}}_s(\bb) + \Phi \circ \tilde{\mathbf{w}}_s(\bb') = f_{(z_2,-z_2,w_2,0)} + f_{(z'_2, - z'_2-sw'_1, -w'_1,0)}.
$$
(Here by slight abuse of notation we view functions on $\mathbb{M}_s$ as functions on $\MM_s$ via the identifications we established above.) For the LHS, we must first express $\bb \bb'$ as a linear combination of monomials in $\mathbb{B}_s$. Consider the case $w'_1 \leq w_2$. Then we have 
\begin{equation}
    \begin{split}
        x_1^{w'_1}x_2^{w_2} y_1^{z_1+z'_1}y_2^{z_2 + z'_2} & = (x_1x_2)^{w'_1} x_2^{w_2-w'_1} y_1^{z_1+z'_1}y_2^{z_2+z'_2} \\
        & = (y_1^s+y_2^s)^{w'_1} x_2^{w_2-w'_1} y_1^{z_1+z'_1} y_2^{z_2+z'_2} 
    \end{split}
\end{equation}
and the expansion of $(y_1^s+y_2^s)^{w'_1}$ contains monomials of the form $y_1^{sk} y_2^{s(w'_1-k)}$, for $0 \leq k \leq w'_1$, so we conclude that $x_1^{w'_1}x_2^{w_2} y_1^{z_1+z'_1}y_2^{z_2 + z'_2}$ can be expressed as a linear combination of the monomials 
$$
x_2^{w_2-w'_1} y_1^{z_1+z'_1 + sk} y_2^{z_2+z'_2 + s(w'_1-k)} \quad \quad \textup{ for } \, 0 \leq k \leq w'_1.
$$
We claim that the above monomials are in $\mathbb{B}_s$. Indeed, by assumption we have $z_1+z_2=-sw_2$ and $z'_1+z'_2=0$, so $z_1+z'_1+sk + z_2+z'_2+s(w'_1-k) = -sw_2+sw'_1$ as required. Thus by the definition of $\fv_s$ 
we compute $\fv_s(\bb \bb')$ by taking the minimum 
\begin{equation}
\begin{split} 
\fv_s(\bb \bb') & = \min\left\{ \fv_s(x_2^{w_2-w'_1} y_1^{z_1+z'_1 + sk} y_2^{z_2+z'_2 + s(w'_1-k)}) \, \mid \, 0 \leq k \leq w'_1 \right\} \\
& = \min\left\{ \Phi(z_2+z'_2 + s(w'_1-k), -z_2-z'_2-s(w'_1-k),w_2-w'_1, 0) \, \mid \, 0 \leq k \leq w'_1 \right\} \\
& = \min\left\{ f_{(z_2+z'_2+s(w'_1-k), -z_2-z'_2-s(w'_1-k), w_2-w'_1,0)} \, \mid \, 0 \leq k \leq w'_1 \right\} \\
& = \min\left\{ f_{(z_2, -z_2, w_2,0)} + f_{(z'_2, -z'_2-sw'_1, -w'_1, 0)} + g_{(s(w'_1-k), sk, 0,0)} \, \mid \, 0 \leq k \leq w'_1 \right\} \\
& = f_{(z_2, -z_2, w_2,0)} + f_{(z'_2, -z'_2-sw'_1, -w'_1, 0)} + \min\{ g_{(s(w'_1-k), sk, 0,0)} \, \mid \, 0 \leq k \leq w'_1 \}
\end{split}
\end{equation}
 where $g_{(c,d,0,0)}$ denotes the function on $\mathbb{M}_s$ defined by $g_{(c,d,0,0)}(w_1,w_2,z_1,z_2)=cw_1+dw_2$. For any element $(w_1,w_2,z_1,z_2) \in \mathbb{M}_s$, we have 
 $$
 \min\{ g_{(s(w'_1-k), sk, 0,0)}(w_1,w_2,z_1,z_2) \, \mid \, 0 \leq k \leq w'_1 \} = \min\{ s(w'_1-k)w_1+skw_2 \, \mid \, 0 \leq k\leq w'_1 \} = 0 
 $$
because $\min\{w_1,w_2\}=0$ for an element in $\mathbb{M}_s$. Hence the function $\min\{ g_{(s(w'_1-k), sk, 0,0)} \, \mid \, 0 \leq k \leq w'_1 \}$ is identically $0$ on $\mathbb{M}_s$, and we conclude 
$$
\fv_s(\bb \bb') = f_{(z_2, -z_2, w_2,0)} + f_{(z'_2, -z'_2-sw'_1, -w'_1, 0)} = \fv_s(\bb)+\fv_s(\bb')
$$
as desired. The case $w_2<w'_1$ follows similarly. This proves that $\fv_s(\bb \bb')=\fv_s(\bb)+\fv_s(\bb')$ for any two elements $\bb,\bb' \in \mathbb{B}_s$.

We must next prove that $\fv_s(fg)=\fv_s(f) + \fv_s(g)$ for arbitrary $f,g \in \mathcal{A}_s$. However, the argument is the same as that given in \cite[Lemma 8.10]{EscobarHaradaManon-PL} so we do not reproduce it here. 
The remaining properties of valuations in Definition~\ref{def_valuation} are straightforward to verify and are left to the reader. 

We now prove that the pair $(\mathcal{A}_s, \fv_s: \mathcal{A}_s \to P_{\MM_s})$ is a detropicalization of $\MM_s$ in the sense of Definition~\ref{definition: detropicalization}. We have already shown that $\fv_s: \mathcal{A}_s \to P_{\MM_s}$ is a valuation (with values in the idempotent semialgebra $P_{\MM_s}$), so it remains only to show that every element of $\Sp(\MM_s)$ is in the image of $\fv_s$, i.e. that $\fv_s$ is surjective onto $\Sp(\MM_s)$, and that the Krull dimension of $\mathcal{A}_s$ is equal to the rank of $\MM_s$. 
The first claim follows immediately from the fact that $\fv_s$ restricted to $\mathbb{B}_s$ is a bijection from $\mathbb{B}_s$ to $\Sp(\MM_s)$, as was seen above. The second claim follows from the fact that $Spec$ of $\mathcal{A}_s$ is an affine variety of dimension $2$, so the Krull dimension of $\mathcal{A}_s$ is $2$, which is the rank of $\MM_s$, as required. 
\end{proof}

\section{Example: a Cox ring of a compactification $X_{\mathcal{A}_\MM}(\PP)$}\label{sec: cox ring}

In \cite[Section 7]{EscobarHaradaManon-PL}, the authors establish a general framework for constructing a compactification of $Spec(\mathcal{A}_{\MM})$ (where $\mathcal{A}_{\MM}$ is a detropicalization of a polyptych lattice $\MM$) with respect to a choice of PL polytope $\PP \subset \MM_{\R}$.  Moreover, in the case when $\mathcal{A}_\MM$ is a UFD, it is shown that the Cox ring of the compactification is finitely generated. The main purpose of this section is to illustrate the general theory outlined in \cite{EscobarHaradaManon-PL} by working out, in detail, the Cox ring of the compactification of $Spec(\mathcal{A}_{s})$ with respect to a PL polytope $\PP$.  More specifically, we showed in \cite[Theorem 7.19]{EscobarHaradaManon-PL} that both the class group and the Cox ring of the compactification is finitely generated. Here, for the rank-$2$ example $\MM_s$ for $s=1$ and for a specific PL polytope $\PP$, we take a step further: we give a concrete presentation of the Cox ring in terms of generators and relations. 

Let $s=1$. We note first that it is straightforward to check, from the explicit generators-and-relation presentation of $\Aa_s$ for the case $s=1$, that $\Aa_s$ is a UFD.  Therefore, \cite[Theorem 7.19]{EscobarHaradaManon-PL} applies. Next, we specify the PL polytope $\PP$ in question. As in Section~\ref{sec: GF polytope}, we specify points under the identification $\Sp(\MM_s) \cong \mathcal{T}_s$. With this understanding, we consider the $3$ points 
$$
\mathsf{p} = (1,-1,1),
\quad 
\mathsf{q} = (-2,2,1),
\quad
\mathsf{r} = (1,-3,-2)
$$
and define 
\begin{equation}\label{eq: PP for Cox}
\PP := \mathcal{H}_{\mathsf{p},-1} \cap \mathcal{H}_{\mathsf{q},-1} \cap \mathcal{H}_{\mathsf{r},-1}.
\end{equation}
It is not hard to check that this is compact, and hence a PL polytope. It is not an integral PL polytope, hence not chart-Gorenstein-Fano; however, the computation of its Cox ring is still useful to illustrate the general theory.

We now briefly recall the definition of the compactification. For details we refer to \cite[Section 7]{EscobarHaradaManon-PL}. For $k$ a positive integer, we define the polytope $k\PP$ by scaling the parameters in the defining inequalities, so in our case
$$
k \PP := \mathcal{H}_{\mathsf{p},-k} \cap \mathcal{H}_{\mathsf{q},-k} \cap \mathcal{H}_{\mathsf{r},-k}.
$$
We also define 
\begin{equation}\label{eq_gamma_kdelta}
    \Gamma(\mathcal{A}_s, k\PP) := \{f \in \mathcal{A}_s \mid \fv(f) \geq \psi_{k\PP}\}
\end{equation}
where $\psi_{k\PP}: \NN=\MM_s \to F$ denotes the support function of the PL polytope $k\PP$
and the inequality is with respect to the partial order on $P_\NN = P_{\MM_s}$ (i.e. pointwise inequality of functions). 
Recall also that the \textbf{support of $f \in \mathcal{A}_s$} is defined as 
follows. If $f=\sum\lambda_ib_i$ for $\lambda_i \in \K$ and $b_i \in \mathbb{B}_s$ is an element in $\mathcal{A}_s$ expressed uniquely as a linear combination of elements of $\mathbb{B}_s$, the \textbf{support of $f$}, denoted by $\mathrm{supp}(f)$, is the point-convex hull of $\{\v^{-1}(\fv(b_i))\mid \lambda_i\neq 0\}$ in $\MM_s \otimes \Q$.

It is shown in \cite[Lemma 7.4]{EscobarHaradaManon-PL} that the space $\Gamma(\mathcal{A}_s, k\PP)$ can be equivalently described as 
\begin{equation}\label{eq: level k support in kP}
\Gamma(\mathcal{A}_s, k\PP) = \{ f \in \mathcal{A}_s \, \mid \, \mathrm{supp}(f) \subseteq k\PP \}.
\end{equation}
We will use this characterization. Then the PL polytope algebra $\Aa_s^\PP$ is defined as 
\begin{equation}
    \Aa_s^\PP := \bigoplus_{k \geq 0} \Gamma(\Aa_s, k\PP) \cdot t^k = \bigoplus_{k \geq 0} \{f \in \Aa_s \, \mid \,  \mathrm{supp}(f) \subseteq k\PP\} \cdot t^k
\end{equation}
where the last equality is by~\eqref{eq: level k support in kP}.  The algebra $\Aa_s^\PP$ is evidently $\Z_{\geq 0}$-graded by the degree of $t$, and 
we define the \textbf{compactification of $\Spec(\Aa_s)$ with respect to $\PP$} as
\begin{equation*}\label{eq: def compactification wrt P}
X_{\Aa_s}(\PP) := \Proj(\Aa_\MM^\PP).
\end{equation*}

In preparation for the computation of the Cox ring of $X_{\Aa_s}(\PP)$, it will be useful to prove some general results. The utility of these results in relation to the Cox ring computation will become apparent below when we explain the general method of computation, which is derived from \cite[Construction 1.4.2.1]{CoxRingsBible}. 
We emphasize that Lemma~\ref{lemma: units} and Proposition~\ref{prop: units} apply to any finite polyptych lattice $\MM$ over $\Z$, not just the rank-$2$ $\MM_s$ case.

\begin{Lemma}\label{lemma: units}
Let $\MM$ be a finite polyptych lattice of rank $r$ over $\Z$ with a fixed choice of strict dual ($\Z$-)pair $(\MM, \NN, \v, \w)$. Let $\mathbb{K}$ be an algebraically closed field and $(\Aa_\MM, \fv)$ a detropicalization of $\MM$ with convex adapted basis $\mathbb{B} = \{\bb_m\}_{m \in \MM}$ (with $\fv(\bb_m) = m \in \MM \subset S_\MM \cong P_\NN$). Suppose $f,g \in \Aa_\MM$ and $f \cdot g = \bb_{m_0}$ for some $m_0 \in \MM$. 
Let $\beta \in \pi(\NN)$ such that $m_0 \in C_\beta := \v^{-1}(\Sp(\NN, \beta))$. Then $\fv(f)=m, \fv(g)=m'$ for some $m, m'\in C_\beta \cap \MM$, and $f = c \, \bb_{m}, g = c' \, \bb_{m'}$ for some $c, c' \neq 0, c,c' \in \mathbb{K}$. 
\end{Lemma}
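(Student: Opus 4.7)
The plan is to translate the algebraic identity $fg = \bb_{m_0}$ into a pointwise equality of piecewise-linear functions on $\NN_\R$, then use a concavity argument on the chart $N_\beta$ to pin down $\fv(f)$ and $\fv(g)$ as individual points of $\Sp(\NN)$ lying in $\v(C_\beta)$, and finally rule out higher-order corrections in the basis expansions of $f$ and $g$. First I would expand $f = \sum_i \lambda_i \bb_{m_i}$ and $g = \sum_j \mu_j \bb_{n_j}$ with $\lambda_i, \mu_j \in \K^*$. The convex adapted basis condition gives $\fv(f) = \min_i \v(m_i)$ and $\fv(g) = \min_j \v(n_j)$ in $P_\NN$, so the valuation property applied to $fg = \bb_{m_0}$ yields
\[
\v(m_0) \;=\; \min_{i,j}\bigl(\v(m_i) + \v(n_j)\bigr) \qquad \text{pointwise on } \NN_\R.
\]

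The key geometric input is that each point $\v(m) \in \Sp(\NN)$ is concave when restricted to any chart $N_\gamma$ of $\NN$: within $N_\gamma$ addition is ordinary, so the defining identity for a point yields superadditivity $\v(m)(n+n') \geq \v(m)(n) + \v(m)(n')$ there, which with positive homogeneity gives concavity. Consequently both $\fv(f)|_{N_\beta}$ and $\fv(g)|_{N_\beta}$ are concave, as minima of concaves. Because $m_0 \in C_\beta$, $\v(m_0)|_{N_\beta}$ is linear, so the equation $\fv(f)|_{N_\beta} + \fv(g)|_{N_\beta} = \v(m_0)|_{N_\beta}$ expresses a linear function as a sum of two concaves, and the standard fact that two concaves summing to a linear function must each be linear (since $P = (P+Q)-Q$ is then simultaneously concave and convex) forces both restrictions to be linear. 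A finite-covering argument then identifies these restrictions with individual summands of the min: the closed sets $A_i = \{n \in N_\beta : \v(m_i)(n) = \fv(f)(n)\}$ cover $N_\beta$, so some $A_{i_0}$ has non-empty interior, and the non-negative concave function $\v(m_{i_0}) - \fv(f)$ vanishing on an open subset of $N_\beta$ must vanish throughout $N_\beta$. Strict duality guarantees that a point of $\Sp(\NN,\beta)$ is determined by its restriction to $N_\beta$, so $\fv(f) = \v(m_{i_0})$ and similarly $\fv(g) = \v(n_{j_0})$ globally, for some $m := m_{i_0}, m' := n_{j_0} \in C_\beta \cap \MM$ satisfying $m +_\beta m' = m_0$.

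For the final assertion $f = c\,\bb_m$ and $g = c'\,\bb_{m'}$, I would write $f = \lambda\bb_m + f_1$ and $g = \mu\bb_{m'} + g_1$ with $\fv(f_1) > \v(m)$ and $\fv(g_1) > \v(m')$ in the pointwise partial order. Using that same-chart basis products yield, up to scalar, a single basis element, i.e.\ $\bb_m\bb_{m'} = c''\bb_{m_0}$ for some $c'' \in \K^*$, the expansion of $fg = \bb_{m_0}$ produces $\lambda\mu c'' = 1$ together with the residual identity $\lambda\bb_m g_1 + \mu f_1\bb_{m'} + f_1 g_1 = 0$; integrality of $\Aa_\MM$ forces $f_1 = 0 \iff g_1 = 0$. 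The hard part will be ruling out nonzero $f_1, g_1$: my plan is an iterated leading-term cancellation, matching terms of equal valuation in the residual identity via linear independence of $\B$ and the structure of same-chart products to peel off leading contributions of $f_1$ and $g_1$, with Noetherianity of $\Aa_\MM$ guaranteeing termination and ultimately forcing $f_1 = g_1 = 0$.
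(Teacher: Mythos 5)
The first half of your proof matches the paper's strategy: expand $f$ and $g$ in the convex adapted basis, reduce to the pointwise identity $\v(m_0)=\min_{i,j}\bigl(\v(m_i)+\v(n_j)\bigr)$ on $\NN_\R$, observe that $\fv(f)$ and $\fv(g)$ restrict to concave functions on $N_\beta$ (as minima of points, each concave there), and conclude from the linearity of $\v(m_0)|_{N_\beta}$ that both restrictions are themselves linear. Your finite-covering step identifying $\fv(f)|_{N_\beta}$ with a single $\v(m_{i_0})|_{N_\beta}$ is a useful addition; it makes explicit why $\fv(f)$ is actually a point of $\Sp(\NN)$ rather than merely an element of $P_\NN$, something the paper's proof passes over.

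The gap is in your final paragraph. You propose to rule out nonzero $f_1,g_1$ via an ``iterated leading-term cancellation'' with Noetherianity of $\Aa_\MM$ guaranteeing termination, but this is only a plan, and it is doubtful it can be carried through as stated: Noetherianity of the ring does not give a descending-chain condition on valuations in $P_\NN$, so there is no obvious well-founded order under which ``peeling off leading terms'' terminates. More to the point, the elaborate scheme is unnecessary because the tools from your first half already finish the job. Once you know $\fv(f)=\v(m)$ with $\v(m)|_{N_\beta}$ \emph{linear}, any other $m_i$ in the support of $f$ has $\v(m_i)\ge\v(m)$ pointwise (since $\v(m)$ is the minimum over the support). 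The difference $h:=\v(m_i)-\v(m)$, restricted to $N_\beta$, is then concave (concave minus linear), nonnegative, and vanishes at the origin; concavity gives $h(n)+h(-n)\le 2h(0)=0$, forcing $h(n)=h(-n)=0$ for every $n$, so $\v(m_i)=\v(m)$ and, by injectivity of $\v$, $m_i=m$. Since the $m_i$ index basis elements distinct from $\bb_m$, no such term exists and $f=c\,\bb_m$; likewise $g=c'\,\bb_{m'}$. This is in essence the paper's argument — its one-line justification (``convex functions obtained as a minimum of a finite set of linear functions'') is cryptic, but the mechanism is precisely the linear-versus-concave pinching you already deployed, and it replaces your entire cancellation scheme with two lines.
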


\begin{proof} 
If $f \cdot g = \bb_{m_0}$ then $\fv(f)+\fv(g)=\fv(\bb_{m_0})=m_0$. Since $m_0$ is assumed to be in $C_\beta = \v^{-1}(\Sp(\NN,\beta))$, this implies that, interpreted as a function on $\NN$ via $\v$, $m_0$ induces a linear function on the coordinate chart $N_\beta$. By definition of $\fv$, the images $\fv(f),\fv(g)$ are convex piecewise-linear functions on $N_\beta$, and their sum is linear on $N_\beta$. This can occur only if both $\fv(f)$ and $\fv(g)$ are linear on $N_\beta$. Thus $\fv(f),\fv(g)$ are contained in $\v^{-1}(\Sp(\NN,\beta)) = C_\beta$, and by definition of detropicalizations are also contained in $\MM$. Hence $\fv(f),\fv(g) \in C_\beta \cap \MM$. In particular there exist $m, m' \in C_\beta \cap \MM$ such that $\fv(f)=m, \fv(g)=m'$. Since $\mathbb{B}$ is a convex adapted basis for $\fv$, it now follows that $f = c \bb_{m} + \sum_i c_i \bb_{m_i}$ and $g = c' \bb_{m'} + \sum_j c_j \bb_{m_j}$ for $c,c' \in \mathbb{K}^*$. Moreover, since $\fv(f)=\min(\{m\} \cup \{m_i\}) = m$, we must have that $\fv(\bb_{m_i}) = m_i \geq m$ as functions on $\NN$ for all $i$, and similarly, $m_j \geq m'$ as functions on $\NN$ for all $j$. But $m_i,m_j$ are convex functions obtained as a minimum of a finite set of linear functions, so $m_i \geq m, m_j \geq m'$ are only possible if $m_i=m, m_j=m'$. In other words, $f = c \bb_m$ and $g=c'\bb_{m'}$. This concludes the proof. 
\end{proof} 

We can now compute the group of units in $\Aa_\MM$.

\begin{Proposition}\label{prop: units}
Let the notation and assumptions be as in Lemma~\ref{lemma: units}. Let $u \in \Aa_\MM$. Then $u$ is a unit in $\Aa_\MM$ if and only if $u = c \,\bb_{m}$ for $c \in \mathbb{K}^*$ and $m \in \bigcap_{C_\beta \in \Sigma(\MM)} C_\beta$ where the intersection is over all maximal-dimensional cones in $\Sigma(\MM)$. 
\end{Proposition}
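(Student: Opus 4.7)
The plan is to prove the two implications separately, with the forward direction essentially immediate from Lemma~\ref{lemma: units} and the backward direction requiring a closer analysis of $\bb_m \cdot \bb_{-m}$ in the convex adapted basis.

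For the forward direction, suppose $uv = 1 = \bb_0$ for some $v \in \Aa_\MM$. Since the zero function on $\NN$ is linear on every chart $N_\beta$, we have $0 \in C_\beta$ for every $\beta \in \pi(\NN)$. Applying Lemma~\ref{lemma: units} with $m_0 = 0$ yields $u = c\,\bb_m$ for some $c \in \K^*$ with $m = \fv(u) \in C_\beta$, for every $\beta$. Hence $m \in \bigcap_\beta C_\beta$, as desired.

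For the backward direction, suppose $u = c\,\bb_m$ with $c \in \K^*$ and $m \in \bigcap_\beta C_\beta$. The strategy is to produce $u^{-1}$ as a scalar multiple of $\bb_{-m}$. First observe that $\v(m)$ being linear on every chart of $\NN$ implies that $-m \in \MM$ is well-defined (negation in any chart yields the same element of $\MM$, since the PL mutations restrict to linear maps where $\v(m)$ is globally linear), that $-m \in \bigcap_\beta C_\beta$, and that $\v(-m) = -\v(m)$ pointwise on $\NN$. By the valuation axioms, $\fv(\bb_m \cdot \bb_{-m}) = \v(m) + \v(-m) = 0$ pointwise. Now expand $\bb_m \cdot \bb_{-m} = \sum_i c_i \bb_{m_i}$ in the convex adapted basis: the condition $\fv(\bb_m \cdot \bb_{-m}) = 0$ forces $\min_i \v(m_i) = 0$ pointwise, and hence each $\v(m_i) \geq 0$ on $\NN_\R$.

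The main step is to conclude that every $m_i = 0$, so that $\bb_m \cdot \bb_{-m}$ is a nonzero scalar multiple of $1$. The crucial observation is that every point of $\Sp(\NN)$ extends to a concave PL function on $\NN_\R$ (which follows from the defining identity $p(n) + p(n') = \min_\alpha p(n +_\alpha n')$ together with linearity on cones of $\Sigma(\NN)$) and vanishes at the origin. Concavity, non-negativity, and vanishing at the origin force the function to vanish identically: for any $n \in \NN_\R$, concavity gives $0 = \v(m_i)(0) \geq \tfrac{1}{2}(\v(m_i)(n) + \v(m_i)(-n))$, and non-negativity then forces $\v(m_i)(n) = 0$. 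By injectivity of $\v$, each $m_i = 0$, so $\bb_m \cdot \bb_{-m} = c' \bb_0 = c' \cdot 1$ for some $c' \in \K^*$ (nonzero because $\Aa_\MM$ is a domain and both factors are nonzero). Hence $u$ is a unit with inverse $c^{-1}(c')^{-1}\bb_{-m}$. I expect the main obstacles to be the two structural facts underpinning this argument: that $-m$ is well-defined in $\MM$ with $\v(-m) = -\v(m)$ for $m$ in the lineality subset, and that points of $\Sp(\NN)$ are concave PL functions on $\NN_\R$. Both should follow from the polyptych lattice axioms and the strict dual pairing, but warrant careful justification.
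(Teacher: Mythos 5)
Your proof is correct and follows essentially the same route as the paper's: the forward direction applies Lemma~\ref{lemma: units} with $m_0 = 0$, and the backward direction shows $\bb_m \cdot \bb_{-m}$ has valuation $0$ and then forces each basis term in its expansion to be $\bb_0$ via a concavity argument, exactly what the paper invokes as ``the same argument as in the proof of Lemma~\ref{lemma: units}.'' The one place your justification is a bit less crisp than the paper's is the well-definedness of $-m$: the paper argues directly that $\bigcap_\beta C_\beta$ is a linear subspace because $\Sigma(\MM)$ is a complete fan (its common intersection is the lineality space), which is cleaner than your appeal to ``the PL mutations restrict to linear maps where $\v(m)$ is globally linear,'' though both lead to the same conclusion. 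Otherwise the two proofs match step for step.
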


\begin{proof} 
Suppose first that $u$ is a unit. Then there exists $v \in \Aa_\MM$ with $u \cdot v =1$, and $1 = \bb_{0}$ for $0\in \MM$. Note also that $0 \in C_\beta$ for all maximal-dimensional cones $C_\beta$ in $\Sigma(\MM)$, since $C_\beta$ is a cone. By Lemma~\ref{lemma: units}, it follows that $\fv(v) = \bb_m$ for $m \in \left( \bigcap_{C_\beta \in \Sigma(\MM)} C_\beta\right) \cap \MM$ and that $u = c \, \bb_m$ for $c \in \mathbb{K}^*$. 

 Now for the opposite implication, suppose that $m \in \left(\bigcap_{C_\beta \in \Sigma(\MM)} C_\beta\right) \cap \MM$. The PL fan $\Sigma(\MM)$ is a complete fan, so the intersection of all its cones must be a linear subspace (in particular, addition is well-defined), and thus $\left(\bigcap_{C_\beta \in \Sigma(\MM)} C_\beta \right) \cap \MM$ is a lattice. Hence for any $m \in\left(\bigcap_{C_\beta \in \Sigma(\MM)} C_\beta \right) \cap \MM $ there exists $m' \in \left(\bigcap_{C_\beta \in \Sigma(\MM)} C_\beta \right) \cap \MM$ with $m+m'=0$. Now consider $\bb_m \cdot \bb_{m'}$ which has $\fv(\bb_m \cdot \bb_{m'})=0 = \fv(1)$. Then $\bb_m \cdot \bb_{m'} = c' \, 1 + \sum_i c_i \bb_{m_i}$ for $c \in \mathbb{K}^*$ and $m_i \geq 0$ as functions on $\NN$ for all $i$. The same argument as in the proof of Lemma~\ref{lemma: units} shows $m_i \equiv 0$ on $\NN$, i.e. $m_i=0$, and so $\bb_m \cdot \bb_{m'}=c' \, 1$. An inverse of $c \cdot \bb_m$ is therefore given by $\frac{1}{c\, c'} \bb_{m'}$ and the claim is proved. 
\end{proof}

Turning back to our concrete rank-$2$ example $\MM_s$ for $s=1$, an application of Proposition~\ref{prop: units} immediately yields the following. 

\begin{Corollary}\label{corollary: units for s=1}
    Let $s=1$ and consider $\MM_s, \Aa_s$ as above. The group of units of $\Aa_s$ is generated by $y_1y_2^{-1}$. 
\end{Corollary}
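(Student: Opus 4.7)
The plan is to apply Proposition~\ref{prop: units} directly to the detropicalization $(\Aa_1,\fv_1)$ constructed in Theorem~\ref{theorem: MMs detrop}, whose convex adapted basis is the monomial set $\mathbb{B}_1$ of~\eqref{eq: def Bs basis}. That proposition reduces the computation of units in $\Aa_1$ to two tasks: (i) identify the intersection $\bigcap_\beta C_\beta$ of all maximal-dimensional cones of $\Sigma(\MM_1)$; and (ii) read off the basis elements $\bb_m$ whose index $m$ lies in this intersection.

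For (i), I would invoke the description of $\Sigma(\MM_1)$ from Section~\ref{sec: MM_s}: the PL fan has exactly two maximal-dimensional cones, namely the upper and lower half-spaces $H_+ = \pi_1^{-1}(\{y \geq 0\})$ and $H_- = \pi_1^{-1}(\{y \leq 0\})$. Their intersection is the ``equator'' $H_+ \cap H_- = \pi_1^{-1}(\{y = 0\}) \subset \MM_1$.

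For (ii), I would translate the condition $m \in H_+ \cap H_-$ through the identifications of Section~\ref{sec: detrop MMs}. The composite $\Theta_1^{-1} \circ \Psi_1: \mathbb{M}_1 \to M_1$ sends $(w_1,w_2,z_1,z_2) \mapsto (z_2, w_1 - w_2)$, so the preimage of $\{y=0\}$ in $\mathbb{M}_1$ consists of those tuples with $w_1 = w_2$. Combined with the $\mathbb{M}_1$-defining constraint $\min\{w_1,w_2\} = 0$, this forces $w_1 = w_2 = 0$; then the remaining relation $z_1 + z_2 = -s \cdot w_2 = 0$ (using $s=1$) forces $z_2 = -z_1$. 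The corresponding basis monomials are therefore exactly $\bb_m = x_1^0 x_2^0 y_1^{z_1} y_2^{-z_1} = (y_1 y_2^{-1})^{z_1}$ for $z_1 \in \Z$.

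Combining these basis elements with the $\C^*$-scalar factors allowed by Proposition~\ref{prop: units} gives the group of units as $\{c(y_1 y_2^{-1})^n : c \in \C^*,\, n \in \Z\}$, which is generated by $y_1 y_2^{-1}$ as claimed. I do not expect any substantive obstacle: the argument is a direct bookkeeping exercise, and the only mild subtlety is keeping the bijections $\pi_1$, $\Theta_1$, and $\Psi_1$ straight when pulling the lineality condition $y=0$ back to $\mathbb{M}_1$.
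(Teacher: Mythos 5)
Your proof is correct and follows essentially the same route as the paper: invoke Proposition~\ref{prop: units}, identify the intersection of the two maximal cones of $\Sigma(\MM_1)$ as $\{w_1=w_2=0\}\subset\mathbb{M}_1$, and read off the basis monomials $(y_1y_2^{-1})^{z_1}$. The paper simply states the characterization $\{w_1=w_2=0\}$ directly, whereas you supply the bookkeeping through $\Theta_1^{-1}\circ\Psi_1$ that justifies it, which is a welcome expansion rather than a different argument.
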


\begin{proof} 
By Proposition~\ref{prop: units} we must find $m$ in the intersection of the maximal cones of $\Sigma(\MM_s)$. The (lattice points inside the) intersection of the two maximal cones in $\Sigma(\MM_s)$, viewed as a subset of $\mathbb{M}_s$, is the set $\{w_1=w_2=0\} \subset \mathbb{M}_s$. In this subset we must have $z_1+z_2=0$, so $z_2=-z_1$ and we see that the monomials -- i.e. the convex adapted basis elements -- corresponding to these lattice points are exactly $y_1^{k}y_2^{-k}$ for $k \in \Z$. These are generated as a group by the single generator $y_1 y_2^{-1}$. 
\end{proof}

We now explain our general method for computing the Cox ring of $X_{\Aa_\MM}(\PP)$, which is also applicable in general, not just our specific rank $2$ examples. It is based on \cite[Construction 1.2.4.1]{CoxRingsBible} as well as the proof of \cite[Theorem 7.16]{EscobarHaradaManon-PL}. First, in \cite{CoxRingsBible} it is explained that the Cox ring of $X$ (for $X$ an irreducible, normal prevariety with $\Gamma(X, \mathcal{O}^*)=\mathbb{K}^*$ and finitely generated class group) can be described as 
\begin{equation}\label{eq: Cox ring bible description} 
\Gamma(X, \mathcal{S})/\Gamma(X,\mathcal{I})
\end{equation} 
where $\mathcal{S}$ is a certain sheaf of divisorial algebras (and $\Gamma(X,\mathcal{S})$ the ring of its global sections)) and $\mathcal{I}$ is a sheaf of ideals of $\mathcal{S}$. In this exposition, we do not give a detailed description of either $\mathcal{S}$ and $\mathcal{I}$ because we are able to give another, more concrete, description of both of these rings. Indeed, in the course of the proof of \cite[Theorem 7.16]{EscobarHaradaManon-PL} we show the following. 

\begin{Lemma}\label{lemma: Cox ring Aa_MM description}
Let $\MM$ be a finite polyptych lattice of rank $r$ over $\Z$ with a fixed choice of strict dual $\Z$-pair $(\MM,\NN, \v, \w)$. Let $\K$ be an algebraically closed field and $(\Aa_\MM,\fv)$ a detropicalization of $\MM$ with convex adapted basis $\B$. 
Let $\PP = \cap_{i=1}^{\ell} \HH_{\w(n_i), a_i} \subset \MM_\R$ be a full-dimensional PL polytope. Suppose $n_i \in \NN, n_i \neq 0$, the $n_i$ are pairwise distinct, and $a_i \in \Z_{< 0}$ for all $i \in [\ell]$. Suppose also that for each $n_i$ there exists a coordinate chart $\alpha_i \in \pi(\MM)$ on which $n_i$ is linear, and, the intersection of the boundary of $\pi_{\alpha_i}(\HH_{\w(n_i),a_i})$ with $\pi_{\alpha_i}(\PP)$ is a facet of $\pi_{\alpha_i}(\PP)$. Let $X_{\Aa_\MM}(\PP) := \mathrm{Proj}(\Aa^\PP_\MM)$ be the compactification of $Spec(\Aa_\MM)$ constructed in \cite[Section 7]{EscobarHaradaManon-PL}. 
Then, for $X = X_{\Aa_\MM}(\PP)$, the ring $\Gamma(X,\mathcal{S})$ can be described as 
\begin{equation}\label{eq: Cox ring Aa_MM description}
    \bigoplus_{\overline{r}\in \Z^\ell} \Aa_\MM(\overline{r}) t_1^{r_1} \cdots t_\ell^{r_\ell} \subset \Aa_\MM[t_1^{\pm}, \cdots, t_\ell^{\pm}]
\end{equation}
where for $\overline{r} =(r_1,\cdots,r_\ell) \in \Z^\ell$ we define 
\begin{equation}\label{eq: def AaMMr}
\Aa_\MM(\overline{r}) := \mathrm{span}_{\mathbb{K}}\{ \bb_m \, \mid \, \langle n_i, m \rangle + r_i \geq 0 \, \, \textup{ for all } i \in [\ell] \}
\end{equation}
and $\langle n_i, m\rangle$ denotes the dual pairing between $\NN$ and $\MM$. 
\end{Lemma}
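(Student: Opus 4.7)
The plan is to extract the description in~\eqref{eq: Cox ring Aa_MM description} from the geometric picture of $X = X_{\Aa_\MM}(\PP)$ constructed in \cite[Section 7]{EscobarHaradaManon-PL} together with the standard realization of $\Gamma(X,\mathcal{S})$ from \cite[Construction 1.4.2.1]{CoxRingsBible}. Recall that $\mathcal{S}$ is a sheaf of divisorial algebras whose graded pieces are $\mathcal{O}_X(\sum_i r_i D_i)$ for boundary divisors $D_i$, so the first task is to identify those divisors and the corresponding $\Z^\ell$-grading on $\Gamma(X,\mathcal{S})$. Under the hypotheses of the lemma, namely that each $n_i$ is linear on some chart $\alpha_i$ and that the intersection of $\partial \pi_{\alpha_i}(\HH_{\w(n_i),a_i})$ with $\pi_{\alpha_i}(\PP)$ is a facet of $\pi_{\alpha_i}(\PP)$, the proof of \cite[Theorem 7.16]{EscobarHaradaManon-PL} produces, for each $i \in [\ell]$, a distinct prime boundary divisor $D_i \subset X$ lying in $X \setminus \Spec(\Aa_\MM)$, and these together exhaust the boundary. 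Taking the free abelian group $K = \bigoplus_i \Z \cdot D_i$ then gives the grading group for $\mathcal{S}$.

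The main technical step is to establish, for every convex adapted basis element $\bb_m \in \B$, the order of vanishing formula $\mathrm{ord}_{D_i}(\bb_m) = \langle n_i, m\rangle$, where the pairing denotes $\v(m)(n_i) = \w(n_i)(m)$. I would argue this by passing to a distinguished affine patch of $X$ containing a generic point of $D_i$, realized as a localization of the appropriate graded piece of $\Aa^\PP_\MM$. On such a patch, the combinatorial valuation $\fv$ may be compared to the classical divisorial valuation along $D_i$, and the hypothesis that $n_i$ is linear on $\alpha_i$ ensures this comparison collapses to the honest pairing $\langle n_i, m\rangle$. This is the PL analogue of the toric identity $\mathrm{ord}_{D_\rho}(\chi^m) = \langle m, u_\rho\rangle$, with $n_i$ playing the role of the primitive ray generator and $\bb_m$ the role of the Laurent monomial.

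Once orders of vanishing are in hand, the description follows by standard reasoning. A rational function $f = \sum_j \lambda_j \bb_{m_j} \in \Aa_\MM$ extends to a global section of $\mathcal{O}_X(\sum_i r_i D_i)$ precisely when $\mathrm{ord}_{D_i}(f) + r_i \geq 0$ for every $i \in [\ell]$. Because $\B$ is a convex adapted basis for $\fv$ and the $\mathrm{ord}_{D_i}$ are detected at the valuation level, this condition is equivalent to requiring $\langle n_i, m_j\rangle + r_i \geq 0$ for every $j$ appearing in the expansion, i.e.\ $\bb_{m_j} \in \Aa_\MM(\overline{r})$ for each such $j$. Labeling the $\overline{r}$-graded piece by the formal monomial $t_1^{r_1} \cdots t_\ell^{r_\ell}$ and summing over $\overline{r} \in \Z^\ell$ embeds $\Gamma(X,\mathcal{S})$ into $\Aa_\MM[t_1^{\pm}, \ldots, t_\ell^{\pm}]$ exactly as in~\eqref{eq: Cox ring Aa_MM description}; compatibility with the ring structure follows from $\fv$ being a valuation and $\B$ being convex adapted.

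The hard part will be the order-of-vanishing computation in the second step: bridging the combinatorial valuation $\fv$ with the geometric divisorial valuations attached to the $D_i$. Concretely, I would need to exhibit affine charts of $X$ whose local rings simultaneously carry the combinatorial valuation inherited from $\fv$ and the divisorial valuation along $D_i$, and then verify these agree on the basis $\B$ via the pairing with $n_i$. This is parallel to the toric case where both valuations can be read off from $\K[S_\sigma]$ via pairing with the primitive ray generator, but in the PL setting one must use the linearity of $n_i$ on the chart $\alpha_i$ to descend to a situation where the classical toric calculation applies.
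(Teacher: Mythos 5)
Your overall architecture is the natural and almost certainly the correct one: identify the boundary divisors $D_i$ of $X_{\Aa_\MM}(\PP)$ relative to $\Spec(\Aa_\MM)$, show they are in bijection with the facets of $\PP$ (equivalently, with the defining half-spaces $\HH_{\w(n_i),a_i}$), and realize $\Gamma(X,\mathcal{S}) = \bigoplus_{\overline{r}} H^0(X, \mathcal{O}_X(\sum_i r_i D_i))$ with the degree-$\overline{r}$ piece cut out by the order-of-vanishing conditions $\mathrm{ord}_{D_i}(f) + r_i \geq 0$. The paper itself simply cites the proof of Theorem 7.16 in the companion paper for this lemma, and your outline is very plausibly a reconstruction of that argument, which is modelled on the toric identity $\mathrm{ord}_{D_\rho}(\chi^m)=\langle m, u_\rho\rangle$.

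That said, there remain two points that your sketch flags but does not actually establish, and they are precisely the content being borrowed from Theorem 7.16. First, the crux identity $\mathrm{ord}_{D_i}(\bb_m) = \langle n_i, m\rangle$ is only sketched: one must produce an affine chart of $X$ whose local ring dominates the relevant discrete valuation ring, use the hypothesis that $n_i$ is linear on the chart $\alpha_i$ to reduce to a genuinely toric computation, and verify normality of $X$ along $D_i$ so that $\mathrm{ord}_{D_i}$ is a bona fide divisorial valuation. Second, to pass from basis elements to arbitrary $f = \sum_j \lambda_j \bb_{m_j}$, you need $\mathrm{ord}_{D_i}(f) = \min_j \langle n_i, m_j\rangle$, i.e.\ no cancellation. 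This follows once one identifies $\mathrm{ord}_{D_i}(\cdot)$ with evaluation of $\fv(\cdot)$ at $n_i$, since the convex adapted basis condition gives $\fv(\sum_j \lambda_j \bb_{m_j}) = \min_j \fv(\bb_{m_j})$ as PL functions and one then evaluates at $n_i$; but this identification is exactly the unproven first point, so the two gaps are really one. Your phrase that the $\mathrm{ord}_{D_i}$ are ``detected at the valuation level'' asserts rather than proves this, and it is the heart of the matter. Given that the paper under review also defers this step, your proposal matches its level of rigor, but it is not a self-contained proof.
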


Continuing this line of reasoning, we can also describe $\Gamma(X,\mathcal{I})$ more concretely in terms of $\Aa_\MM$ as follows. For $f \in \Aa_\MM$, let $\overline{d}_f \in \Z^\ell$  denote the integer vector $\overline{d}_f := (\mathrm{ord}_{D_1}(f), \cdots, \mathrm{ord}_{D_\ell}(f))$ given by the orders of vanishing of $f$ along the divisors $D_i$ corresponding to the PL half-spaces $\HH_{\w(n_i),a_i}$ (as described in \cite[Section 7]{EscobarHaradaManon-PL}). Then we have the corresponding monomial $t^{\overline{d}_f} := t_1^{\mathrm{ord}_{D_1}(f)} t_2^{\mathrm{ord}_{D_2}} \cdots t_\ell^{\mathrm{ord}_{D_\ell}(f)}$. 

\begin{Lemma}\label{lemma: kernel}
    Let the assumptions and notation be as in Lemma~\ref{lemma: Cox ring Aa_MM description}. Under the identification of $\Gamma(X,\mathcal{S})$ with~\eqref{eq: Cox ring Aa_MM description} given in Lemma~\ref{lemma: Cox ring Aa_MM description}, the ideal $\Gamma(X,\mathcal{I})$ in $\Gamma(X,\mathcal{S})$ is contained in the ideal in~\eqref{eq: Cox ring Aa_MM description} generated by $u - t^{\overline{d}_u}$, as $u$ ranges over the group of units in $\Aa_\MM$. 
\end{Lemma}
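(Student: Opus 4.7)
The plan is to unwind the definition of the ideal sheaf $\mathcal{I}$ from \cite[Construction 1.4.2.1]{CoxRingsBible} and show that every global section of $\mathcal{I}$ lies in the ideal $J \subseteq \Gamma(X,\mathcal{S})$ generated by the elements $u - t^{\overline{d}_u}$. Recall that in \cite{CoxRingsBible}, given a finitely generated subgroup $K \subseteq \mathrm{WDiv}(X)$ surjecting onto $\mathrm{Cl}(X)$, the ideal sheaf $\mathcal{I}$ is locally generated by sections $1 - \chi(D)$, where $D$ ranges over principal divisors in $K$ and $\chi$ is a chosen splitting sending each principal $D$ to a rational function with divisor $D$. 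In our setting, I would take $K = \bigoplus_{i=1}^\ell \Z D_i$, where $D_i$ is the boundary divisor associated to the facet $\HH_{\w(n_i),a_i}$. Since $X \setminus \bigcup D_i = \mathrm{Spec}(\Aa_\MM)$, any rational function on $X$ with divisor supported on $\bigcup D_i$ is invertible on $\mathrm{Spec}(\Aa_\MM)$, hence by Proposition~\ref{prop: units} is a unit in $\Aa_\MM$. Thus the principal divisors in $K$ are precisely $\{\overline{d}_u : u \in \Aa_\MM^*\}$, and I would choose $\chi(\overline{d}_u) = u$, so that the local generators of $\mathcal{I}$ take the form $u - t^{\overline{d}_u}$, up to multiplication by a unit of $\Gamma(X,\mathcal{S})$.

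By left-exactness of the global sections functor, $\Gamma(X,\mathcal{I})$ equals the kernel of the natural map $\Gamma(X, \mathcal{S}) \to \Gamma(X, \mathcal{S}/\mathcal{I}) = R(X)$, and the containment $J \subseteq \Gamma(X,\mathcal{I})$ is immediate. The content of the lemma is the reverse inclusion, which reduces to injectivity of the induced map $\Gamma(X,\mathcal{S})/J \to R(X)$. I would establish this using the $\mathrm{Cl}(X)$-grading on both sides, where $\mathrm{Cl}(X) = \Z^\ell/N$ with $N = \langle \overline{d}_u : u \in \Aa_\MM^*\rangle$. The map respects this grading, so it suffices to check injectivity on each homogeneous piece. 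Given $h = \sum_i f_i t^{\overline{r}_i} \in \Gamma(X,\mathcal{S})$ whose terms all lie in the class $[\overline{r}_0] \in \mathrm{Cl}(X)$, write $\overline{r}_i - \overline{r}_0 = \overline{d}_{u_i}$ for units $u_i \in \Aa_\MM^*$, and apply the relations $u_i - t^{\overline{d}_{u_i}} \in J$ iteratively to rewrite $h$ modulo $J$ as a single $\Z^\ell$-homogeneous element $g \cdot t^{\overline{r}_0}$ with $g \in \Aa_\MM(\overline{r}_0)$. Under the identification of Lemma~\ref{lemma: Cox ring Aa_MM description}, the image of $g \cdot t^{\overline{r}_0}$ in $R(X)$ is just $g$ viewed as a section of $\mathcal{O}(\overline{r}_0)$, so if this image vanishes then $g = 0$, hence $h \in J$.

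The main obstacle is ensuring that each reduction step lands inside $\Gamma(X,\mathcal{S})$ and not merely in the enveloping Laurent polynomial ring $\Aa_\MM[t_1^\pm, \ldots, t_\ell^\pm]$. The natural rewriting $f_i t^{\overline{r}_i} \equiv f_i u_i t^{\overline{r}_0} \pmod{J}$ comes from the identity $f_i t^{\overline{r}_i} - f_i u_i t^{\overline{r}_0} = f_i t^{\overline{r}_0}(t^{\overline{d}_{u_i}} - u_i)$, and validity requires $f_i t^{\overline{r}_0} \in \Gamma(X, \mathcal{S})$, i.e. $f_i \in \Aa_\MM(\overline{r}_0)$ — a condition that does not follow automatically from $f_i \in \Aa_\MM(\overline{r}_i)$. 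To resolve this I would either process terms in a carefully chosen order (so that intermediate partial sums always live in some common $\Aa_\MM(\overline{r})$), or, alternatively, choose the representative $\overline{r}_0$ of the class $[\overline{r}_0]$ to be effective enough in $\Z^\ell$ to dominate the divisors $-\mathrm{div}(f_i)$ uniformly, clearing all denominators before performing the substitutions. This bookkeeping with effective representatives is the delicate technical heart of the argument, and it mirrors the classical proof for the Cox ring of a toric variety, specialized to the polyptych-lattice setting.
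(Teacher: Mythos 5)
Your plan differs genuinely from the paper's. The paper proves the lemma in essentially one step: it cites \cite[Construction 1.2.4.1]{CoxRingsBible} for the fact that the \emph{global sections} ideal $\Gamma(X,\mathcal{I})$ (not merely the sheaf $\mathcal{I}$) is already generated, as an ideal of $\Gamma(X,\mathcal{S})$, by the elements $1 - \chi(E)$ with $E$ ranging over the subgroup $K^0$ of principal divisors supported on the boundary; after identifying $K^0$ with $\{\overline{d}_u : u \in \Aa_\MM^*\}$ and matching degrees via the $t_i$ variables, these generators are exactly $u - t^{\overline{d}_u}$, and containment follows. You only use \cite{CoxRingsBible} for the \emph{local} description of $\mathcal{I}$ and then try to re-derive the global containment from scratch by establishing injectivity of $\Gamma(X,\mathcal{S})/J \to R(X)$ via the $\mathrm{Cl}(X)$-grading. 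That route, if it were completed, would actually give the stronger equality $\Gamma(X,\mathcal{I}) = J$, but it involves precisely the kind of bookkeeping the paper avoids by leaning harder on the reference.

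The difficulty you flag is a real gap, and the proposal does not close it. When you rewrite $h = \sum_i f_i t^{\overline{r}_i}$ modulo $J$ as a single $\Z^\ell$-homogeneous element $g\, t^{\overline{r}_0}$, each step $f_i t^{\overline{r}_i} \mapsto f_i u_i t^{\overline{r}_0}$ uses the identity $f_i t^{\overline{r}_i} - f_i u_i t^{\overline{r}_0} = f_i t^{\overline{r}_0}\bigl(t^{\overline{d}_{u_i}} - u_i\bigr)$, which only exhibits a multiple of a generator of $J$ \emph{inside} $\Gamma(X,\mathcal{S})$ when $f_i \in \Aa_\MM(\overline{r}_0)$, and $f_i \in \Aa_\MM(\overline{r}_i)$ does not imply this. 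You name two repair strategies (reordering the reductions, or passing to a sufficiently effective representative $\overline{r}_0$) but carry out neither, so the argument is incomplete as written. A further small point: in the first paragraph you invoke Proposition~\ref{prop: units} to conclude that a rational function with divisor supported on $\bigcup D_i$ is a unit of $\Aa_\MM$. That conclusion is just basic commutative algebra (a nowhere-vanishing regular function on $\Spec(\Aa_\MM)$ is a unit, since its vanishing locus is empty); Proposition~\ref{prop: units} is the \emph{classification} of the units and is needed later, in Corollary~\ref{corollary: units for s=1}, to make $\mathcal{J}$ explicit, not here.
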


\begin{proof} 
We interpret the objects 
cited in \cite[Construction 1.2.4.1]{CoxRingsBible} in the same way as in \cite[Proof of Theorem 7.14]{EscobarHaradaManon-PL} so we will only sketch the argument. First, it is explained in \cite{CoxRingsBible} that $\Gamma(X,\mathcal{I})$ is generated by sections of the form $1-\chi(E)$ where $1$ is homogeneous of degree $0$, $E$ ranges over elements of the kernel $K^0$ of the surjection $\oplus_i \Z\cdot D_i \to \mathrm{Cl}(X)$ onto the class group of $X$, $\chi: K^0 \to \mathbb{K}(X)^*$ is a character, and $\chi(E)$ is homogeneous of degree $-E$. Since $K^0$ consists of divisors $E = \sum_i a_i D_i$ such that there exists a rational function $f \in \mathbb{K}(X)^*$ with $\mathrm{div}(f)=E$, and since the $D_i$ form the boundary of the compactification $X_{\Aa_\MM}(\PP)$ of $Spec(\Aa_\MM)$, it follows that $K^0$ consists of $\sum_i a_i D_i$ for which there exists a unit $u \in \Aa_\MM$ with $\mathrm{div}(u)=\sum_i a_i D_i$. In particular, in the notation of \cite{CoxRingsBible}, $\chi(\sum_i a_i D_i)=u$. Moreover, our conventions on the homogeneous degree (encoded by the $t_i$ variables) imply that the relation $1-\chi(E)$ is equivalent to $u - t^{\overline{d}_u} = 0$. This proves the claim. 
\end{proof}

With these results in place we can now explain our method of computation. Let $\mathcal{J}$ denote the ideal in $\Aa_\MM[t_1^{\pm}, \cdots, t_\ell^{\pm}]$ generated by the elements $u - t^{\overline{d}_u} = 0$ as $u$ ranges over the group of units of $\Aa_\MM$. Then it follows from \cite[Construction 1.2.4.1]{CoxRingsBible},  Lemma~\ref{lemma: Cox ring Aa_MM description}, and Lemma~\ref{lemma: kernel} that there is an injective ring homomorphism 
$$
\Gamma(X,\mathcal{S})/\Gamma(X,\mathcal{I}) \hookrightarrow \Aa_\MM[t_1^{\pm}, \cdots, t_\ell^{\pm}]/\mathcal{J}.
$$
 The map is induced by the natural inclusion of~\eqref{eq: Cox ring Aa_MM description} into $\Aa_\MM[t_1^{\pm},\cdots, t_\ell^{\pm}]$. Thus, in order to give an explicit presentation of $\mathcal{R}(X)$, it suffices to determine a finite list of generators of~\eqref{eq: Cox ring Aa_MM description}, which we denote as $\{X_1,\cdots, X_n\}$, and then define a surjective homomorphism 
$$
\varphi: \C[u_1, u_2,\cdots, u_n] \rightarrow \bigoplus_{\overline{r}\in \Z^\ell} \Aa_\MM(\overline{r}) t_1^{r_1} \cdots t_\ell^{r_\ell}, \quad u_i \mapsto X_i. 
$$
Composing $\varphi$ with the inclusion 
$$
\bigoplus_{\overline{r}\in \Z^\ell} \Aa_\MM(\overline{r}) t_1^{r_1} \cdots t_\ell^{r_\ell} \hookrightarrow \Aa_\MM[t_1^{\pm},\cdots,t_\ell^{\pm}]
$$
and the quotient map
$$
\Aa_\MM[t_1^{\pm},\cdots,t_\ell^{\pm}] \to \Aa_\MM[t_1^{\pm},\cdots,t_\ell^{\pm}]/\mathcal{J}
$$
then gives a surjective map from $\C[u_1,\cdots,u_n]$ to a ring isomorphic to $\Gamma(X,\mathcal{S})/\Gamma(X,\mathcal{I})$. Computing the kernel $\kappa$ of this map then gives the desired presentation, namely
$$
\Gamma(X,\mathcal{S})/\Gamma(X,\mathcal{I}) \cong \C[u_1,\cdots,u_n]/\kappa.
$$

We now implement the above strategy in our case of $\MM_s$ for $s=1$, the detropicalization $\Aa_s$ of Section~\ref{sec: detrop MMs}, and the PL polytope $\PP$ of~\eqref{eq: PP for Cox}. In Corollary~\ref{corollary: units for s=1} we already computed the generator of the group of units to be $y_1y_2^{-1}$. By mapping the corresponding element of $\mathbb{M}_s$ to $M_1$ and then evaluating the point on the image, it is straightforward from the explicit description of the three points $\mathsf{p} = (1,-1,1), \mathsf{q} = (-2,2,1),\mathsf{r} = (1,-3,2)$ (thought of as elements of $\mathcal{T}_s$) that the order of vanishing of $y_1y_2^{-1}$ along their corresponding divisors are given by $-1,-1,2$ respectively. Thus in our example we have 
$$
\mathcal{J} = \langle y_1y_2^{-1} - t_1^{-1}t_2^{-1} t_3^{2} \rangle.
$$

Following the general method outlined above, our next step is to find a set of generators for~\eqref{eq: Cox ring Aa_MM description}. To accomplish this, we take the following approach. The description~\eqref{eq: def AaMMr} of the $\overline{r}$-graded piece of~\eqref{eq: Cox ring Aa_MM description} makes it clear that~\eqref{eq: Cox ring Aa_MM description} is spanned by $\bb_m t^{\overline{r}}$ where $m \in \MM$ and $\overline{r} \in \Z^\ell$ satisfy certain inequalities. We have seen in Section~\ref{sec: MM_s} that the PL fan $\Sigma(\MM_s)$ of $\MM_s$ has two maximal cones, each of which are half-spaces. Using the identification $\MM_s \cong \mathbb{M}_s$, (the lattices within) these half-spaces may be identified with 
$$
\mathbb{M}_s(1) := \{(0, w_2, z_1, z_2) \, \mid \,  w_2 \geq 0, - w_2 = z_1+z_2\}
$$
(here we have used $s=1$)
and 
$$
\mathbb{M}_s(2) := \{(w_1, 0, z_1, z_2) \, \mid \, w_1 \geq 0, 0 = z_1 + z_2\}.
$$
Note that in both cases, $z_2$ is completely determined by the other variables, so $\mathbb{M}_s(i) \cong \Z_{\geq 0} \times \Z$ for both $i=1,2$. 

Now we consider the cases $i=1,2$ separately. First suppose $i=1$. Then for each $m = (0,w_2,z_1,z_2) \in \mathbb{M}_s(1)$ (here we are implicitly using the identification $\MM_s \cong \mathbb{M}_s$) the corresponding convex adapted basis element $\bb_m$ is $x_2^{w_2}y_1^{z_1}y_2^{z_2} = x_2^{w_2}y_1^{z_1}y_2^{-w_2-z_1}$. Here we have used the conditions for a vector to be in $\mathbb{M}_s(1)$ and have also used that $s=1$.  Note that $\mathbb{M}_s(1)$ is closed under addition. Now it follows from~\eqref{eq: def AaMMr} that the monomials $\bb_m t^{\overline{r}}$, with $m =(0,w_2,z_1,z_2) \in \mathbb{M}_s(1)$, lying in~\eqref{eq: Cox ring Aa_MM description} are precisely those satisfying 
$$
\mathsf{p}(0,w_2,z_1,z_2) + r_1 \geq 0, \quad
\mathsf{q}(0,w_2,z_1,z_2) + r_2 \geq 0, \quad
\mathsf{r}(0,w_2,z_1,z_2) + r_3 \geq 0.
$$
In general we have 
$$
\mathsf{p}(w_1,w_2,z_1,z_2) = z_2+w_1-w_2, \quad 
\mathsf{q}(w_1,w_2,z_1,z_2) = z_2-2w_1+2w_2,
$$
and
$$
\mathsf{r}(w_1,w_2,z_1,z_2) = 
\begin{cases} 
-2z_2+w_1-w_2 \, \textup{ if } \, w_1-w_2 \geq 0 \\
-2z_2+3(w_1-w_2) \, \textup{ if } \, w_1 - w_2 \leq 0, 
\end{cases}.
$$
For $i=1$ we have the relation $z_2=-w_2-z_1$ 
so we conclude that the basis elements of~\eqref{eq: Cox ring Aa_MM description} corresponding to $m \in \mathbb{M}_s(1)$ are in bijective correspondence with 
$$
T_1 = \{(w_2,z_1,r_1,r_2,r_3) \in \Z^5 \, \mid \, 
w_2 \geq 0, -2w_2-z_1+r_1 \geq 0, w_2-z_1+r_2 \geq 0, -w_2+2z_1+r_3 \geq 0\} \subset \Z^5.
$$
A similar computation shows that for $i=2$ there is a bijection between 
$$
T_2 = \{(w_1,z_1,r_1,r_2,r_3) \in \Z^5 \, \mid \, w_1 \geq 0, w_1 -z_1+r_1 \geq 0, -2w_1-z_1+r_2 \geq 0, w_1+2z_1+r_3 \geq 0\} 
$$
and another subset of basis elements of~\eqref{eq: Cox ring Aa_MM description}, corresponding to $\mathbb{M}_s(2)$. Together, the union of these basis elements for $i=1$ and $i=2$ span all of~\eqref{eq: Cox ring Aa_MM description}. 
Notice that since $\mathbb{M}_s(i)$ is closed under addition for $i=1,2$, the monomials (in the variables $w_1,w_2,z_1,z_2,t_1,t_2,t_3$) are closed under multiplication. Thus, if we find generators for the affine semigroups $T_1$ and $T_2$, then the union of the corresponding monomials will generate~\eqref{eq: Cox ring Aa_MM description}. 
A Macaulay2 computation reveals that $T_1$ is generated by 
$$
\left\{ 
\begin{pmatrix} 1 \\ 0 \\ 2 \\ -1 \\ 1 \end{pmatrix}, 
\begin{pmatrix} 0 \\ 0 \\ 1 \\ 0 \\ 0 
\end{pmatrix}, 
\begin{pmatrix} 0 \\ 0 \\ 0 \\ 1 \\ 0 
\end{pmatrix},
\begin{pmatrix} 0 \\ 0 \\ 0 \\ 0 \\ 1 
\end{pmatrix}, 
\begin{pmatrix} 
0 \\ -1 \\ -1 \\ -1 \\ 2 
\end{pmatrix}, 
\begin{pmatrix} 
0 \\ 1 \\ 1 \\ 1 \\ -2  
\end{pmatrix} 
\right\} 
$$
where the vector entries correspond to the variables $w_2,z_1,r_1,r_2,r_3$ respectively, 
while $T_2$ is generated by 
$$
\left\{ 
\begin{pmatrix} 1 \\ 0 \\ -1 \\ 2 \\ -1 \end{pmatrix}, 
\begin{pmatrix} 0 \\ 0 \\ 1 \\ 0 \\ 0 
\end{pmatrix}, 
\begin{pmatrix} 0 \\ 0 \\ 0 \\ 1 \\ 0 
\end{pmatrix},
\begin{pmatrix} 0 \\ 0 \\ 0 \\ 0 \\ 1 
\end{pmatrix}, 
\begin{pmatrix} 
0 \\ -1 \\ -1 \\ -1 \\ 2 
\end{pmatrix}, 
\begin{pmatrix} 
0 \\ 1 \\ 1 \\ 1 \\ -2  
\end{pmatrix} 
\right\}
$$
where the vector entries correspond to $w_1,z_1,r_1,r_2,r_3$ respectivey. 
It follows that the following seven monomials generate~\eqref{eq: Cox ring Aa_MM description}: 
$$
x_2y_1^{-1}t_1^{2}t_2^{-1}t_3, x_1t_1^{-1}t_2^2t_3^{-1}, t_1, t_2, t_3, y_1y_2^{-1}t_1t_2t_3^{-2}, y_1^{-1}y_2t_1^{-1}t_2^{-1}t_3^2. 
$$
Temporarily labelling the above monomials as $X_1, \cdots, X_7$ from left to right, this implies that the ring homomorphism defined as 
$$
\tilde{G}: \C[W_1,\cdots,W_7] \rightarrow \Aa_\MM[t_1^{\pm}, t_2^{\pm}, t_3^{\pm}], \quad \quad W_i\mapsto X_i\, \, \textup{ for all } i, 1 \leq i \leq 7
$$
is surjective onto~\eqref{eq: Cox ring Aa_MM description}.  Thus, by computing the kernel of $\tilde{G}$ yields a presentation of the Cox ring.  We see that $\mathrm{ker}(\tilde{G}) = \langle W_6-1`, W_5-1, W_2W_3 - W_1 W_7 + W_4 \rangle$, and thus we obtain the following. 

\begin{Proposition}\label{prop: Cox ring for rank 2 ex}
The Cox ring of $X_{\Aa_s}(\PP)$ is isomorphic to 
$$
\C[W_1,W_2,W_3,W_4,W_5] / \langle W_2 W_3 - W_1 W_5 + W_4 \rangle.
$$
\end{Proposition}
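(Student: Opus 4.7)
The plan is to carry out in detail the general scheme for computing the Cox ring sketched in the paragraphs preceding this proposition, specialized to the data $(\Aa_s,\PP)$. Combining Lemma~\ref{lemma: Cox ring Aa_MM description} and Lemma~\ref{lemma: kernel}, I will first identify the Cox ring with the image of $\bigoplus_{\overline{r}\in\Z^3} \Aa_s(\overline{r})\, t_1^{r_1}t_2^{r_2}t_3^{r_3}$ inside $\Aa_s[t_1^{\pm},t_2^{\pm},t_3^{\pm}]/\mathcal{J}$. By Corollary~\ref{corollary: units for s=1} the group of units of $\Aa_s$ is cyclic, generated by $y_1 y_2^{-1}$, so $\mathcal{J}$ has the single generator $y_1y_2^{-1} - t_1^{-1}t_2^{-1}t_3^{2}$, where the exponents $(-1,-1,2)$ record the orders of vanishing of this unit along the three boundary divisors $D_{\mathsf p}, D_{\mathsf q}, D_{\mathsf r}$ associated to the half-space decomposition of $\PP$. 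Together with the seven monomials $X_1,\ldots,X_7$ already identified via the Macaulay2 calculation preceding the proposition, this yields a surjection $\tilde G\colon \C[W_1,\ldots,W_7] \twoheadrightarrow \Gamma(X,\mathcal{S})/\Gamma(X,\mathcal{I})$ sending $W_i \mapsto X_i$, and the task reduces to computing $\ker(\tilde G)$.

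The kernel will come from two sources. The first is the passage to the quotient by $\mathcal{J}$: substituting $y_1 y_2^{-1} = t_1^{-1}t_2^{-1}t_3^{2}$ into $X_6$ and $X_7$ (which are built from this unit and its inverse together with compensating powers of the $t_i$) collapses each to $1$, giving $W_6-1$ and $W_7-1$ in $\ker(\tilde G)$. The second is the defining relation $x_1x_2 = y_1+y_2$ of $\Aa_s$ (with $y_2=1$); the plan is to expand a carefully chosen product of the generators $X_i$ whose underlying semigroup element satisfies $w_1>0$ and $w_2>0$ so that the product is no longer an element of $\mathbb{B}_s$, then apply this defining relation to split the product into a sum of genuine basis monomials, re-express those monomials in terms of the $X_i$'s using $\mathcal{J}$ when necessary to clear negative $t$-exponents, and collect to obtain a single additional binomial identity. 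After eliminating $W_6$ and $W_7$ using the first two relations, this identity will reduce to $W_2W_3 - W_1W_5 + W_4$ (the surviving seventh generator being relabeled as $W_5$), matching the stated presentation.

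It remains to verify that these three relations generate all of $\ker(\tilde G)$, which I would handle by a Krull-dimension comparison. On one hand, $\bigoplus_{k\geq 0}\Gamma(\Aa_s, k\PP)\cdot t^k$ is an integral domain (as a subring of $\Aa_s[t_1^{\pm},t_2^{\pm},t_3^{\pm}]$), so the Cox ring is an integral domain, and its Krull dimension equals $\dim X_{\Aa_s}(\PP) + \mathrm{rank}\,\Cl(X_{\Aa_s}(\PP)) = 2 + 2 = 4$, the class-group rank coming from the three boundary divisors modulo the single principal relation imposed by the unit. On the other hand, the proposed quotient $\C[W_1,\ldots,W_5]/\langle W_2W_3 - W_1W_5 + W_4\rangle$ is itself an integral domain of dimension $4$ --- indeed, solving for $W_4$ exhibits it as a polynomial ring in $W_1,W_2,W_3,W_5$ --- so the induced surjection between integral domains of the same Krull dimension must be an isomorphism. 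The main technical obstacle will be the careful bookkeeping in the second step above: selecting the right product of generators to expand, then tracking the monomial rewriting through both the algebraic relation in $\Aa_s$ and the substitution imposed by $\mathcal{J}$, so as to isolate exactly the binomial $W_2W_3 - W_1W_5 + W_4$ while ruling out any stray lower-degree relations that might arise in the calculation.
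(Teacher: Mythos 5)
Your overall strategy is the same as the paper's: invoke Lemma~\ref{lemma: Cox ring Aa_MM description}, Lemma~\ref{lemma: kernel}, and Corollary~\ref{corollary: units for s=1} to set up the surjection $\tilde G$ onto the image of $\bigoplus_{\overline r} \Aa_s(\overline r)t^{\overline r}$ in $\Aa_s[t^{\pm}]/\mathcal J$, take the seven semigroup-generator monomials $X_1,\dots,X_7$, and compute the kernel. The one genuinely new ingredient you add is the Krull-dimension comparison at the end, which is a useful, self-contained way to certify that whatever relations you exhibit actually generate the kernel; the paper instead reports the kernel as a black-box Macaulay2 output. Your identification of $W_6-1$ and $W_7-1$ as the unit relations is also the natural reading (the paper's ``$W_5-1$'' is almost certainly a typo for ``$W_7-1$'').

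The gap, though, is exactly where the work is: you never carry out the expansion step, and the sketch of it is internally inconsistent. After killing $W_6$ and $W_7$ there are only five generators $W_1,\dots,W_5$ left, so ``the surviving seventh generator being relabeled as $W_5$'' does not describe a coherent substitution. More importantly, if one actually performs the expansion you outline, there is essentially one viable product: $X_1 X_2$, the unique product whose exponent vector has both $w_1>0$ and $w_2>0$. Writing $X_1 X_2 = x_1 x_2\, t_1 t_2 = (y_1+1)t_1 t_2 = y_1 t_1 t_2 + t_1 t_2$ and recognizing $t_1 t_2 = X_3 X_4$ and $y_1 t_1 t_2 = X_6 X_5^2$ yields the relation $W_1 W_2 - X_6 W_5^2 - W_3 W_4 \in \ker(\tilde G)$, hence after imposing $W_6=1$ the relation $W_1W_2 - W_3W_4 - W_5^2$. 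This is a homogeneous quadratic form of full rank, so the resulting quotient is the affine cone over a smooth quadric threefold and has a singular point at the origin; in contrast the displayed relation $W_2W_3 - W_1W_5 + W_4$ can be solved for $W_4$ and yields the polynomial ring $\C[W_1,W_2,W_3,W_5]\cong\A^4$. These rings are not isomorphic, so the ``careful bookkeeping'' you defer cannot in fact be made to isolate the binomial $W_2W_3 - W_1W_5 + W_4$. Even granting your dimension argument (which applies equally well to either quadric hypersurface), the key verification is missing, and the relation it produces does not match the proposition's statement --- so this deferred step is not a routine check but the decisive one, and as written it is both incomplete and pointing at the wrong target.

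Two smaller points worth cleaning up if you pursue this: the paper's displayed $X_1 = x_2 y_1^{-1}t_1^2 t_2^{-1}t_3$ is inconsistent with the corresponding $T_1$-generator $(w_2,z_1,r_1,r_2,r_3)=(1,0,2,-1,1)$, whose exponent in $\mathbb M_s$ is $(0,1,0,-1)$ giving $x_2 y_2^{-1}t_1^2 t_2^{-1}t_3$ (equal to $x_2 t_1^2 t_2^{-1}t_3$ in $\Aa_s$ since $y_2=1$); and the paper gives two different signs for the third coordinate of $\mathsf r$ ($-2$ versus $2$), only $-2$ being compatible with membership in $\mathcal T_1$. You should fix both before trusting the downstream relation computation.
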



\begin{thebibliography}{1}

\bibitem{CoxRingsBible}
Ivan Arzhantsev, Ulrich Derenthal, J\"{u}rgen Hausen, and Antonio Laface,
  \emph{Cox rings}, Cambridge Studies in Advanced Mathematics, vol. 144,
  Cambridge University Press, Cambridge, 2015. \MR{3307753}

\bibitem{CoxLittleSchenck}
David~A. Cox, John~B. Little, and Henry~K. Schenck, \emph{Toric varieties},
  Graduate Studies in Mathematics, vol. 124, American Mathematical Society,
  Providence, RI, 2011. \MR{2810322}

\bibitem{EscobarHaradaManon-PL}
Laura Escobar, Megumi Harada, and Christopher Manon, \emph{Geometric families
  from mutations of polytopes}.

\bibitem{Ilten2011}
Nathan~Owen Ilten, \emph{Deformations of smooth toric surfaces}, Manuscripta
  Math. \textbf{134} (2011), no.~1-2, 123--137. \MR{2745256}

\bibitem{Ilten2012}
\bysame, \emph{Mutations of {L}aurent polynomials and flat families with toric
  fibers}, SIGMA Symmetry Integrability Geom. Methods Appl. \textbf{8} (2012),
  Paper 047, 7. \MR{2958983}

\bibitem{Petracci2021}
Andrea Petracci, \emph{Homogeneous deformations of toric pairs}, Manuscripta
  Math. \textbf{166} (2021), no.~1-2, 37--72. \MR{4296370}

\bibitem{Sturmfels-Grobner}
Bernd Sturmfels, \emph{Gr\"{o}bner bases and convex polytopes}, University
  Lecture Series, vol.~8, American Mathematical Society, Providence, RI, 1996.
  \MR{1363949}

\end{thebibliography}

\end{document}